\numberwithin{equation}{section}
\theoremstyle{theorem}
\newtheorem*{theoremA}{Theorem~A}
\newtheorem*{theoremB}{Theorem~B}
\newtheorem{theorem}[equation]{Theorem}
\newtheorem*{theorem*}{Theorem}
\newtheorem{lemma}[equation]{Lemma}
\newtheorem{proposition}[equation]{Proposition}
\newtheorem{corollary}[equation]{Corollary}
\theoremstyle{definition}
\newtheorem{definition}[equation]{Definition}
\theoremstyle{remark}
\newtheorem{remark}[equation]{Remark}
\newcommand*{\id}{\mathrm{id}}
\renewcommand*{\deg}[1]{\left\vert#1\right\vert}
\newcommand*{\modtwo}[1]{\overline{\rule{0pt}{1.75ex} #1}}
\newcommand*{\twistedbideg}[2]{(#1,\modtwo{#2})}
\newcommand*{\twistedtrideg}[3]{(#1,#2,\modtwo{#3})}
\newcommand*{\Aut}{\operatorname{Aut}}
\newcommand*{\point}{\mathrm{pt}}
\newcommand*{\ZZ}{\mathbb{Z}}
\newcommand*{\QQ}{\mathbb{Q}}
\newcommand*{\RR}{\mathbb{R}}
\newcommand*{\ZZII}{\ZZ/2}
\newcommand*{\sheaf}[1]{{\mathcal #1}}
\newcommand*{\Oo}{\sheaf{O}}
\newcommand*{\Spec}{\mathrm{Spec}}
\newcommand*{\Proj}{\mathrm{Proj}}
\newcommand*{\Pic}{\mathrm{Pic}}
\newcommand*{\LC}{\sheaf{L}}
\renewcommand*{\AA}{\mathbb{A}}
\newcommand*{\PP}{\mathbb{P}}
\newcommand*{\PPpX}{\PP^p_x}
\newcommand*{\PPpY}{\PP^p_{y}}
\newcommand*{\RP}{\mathbb{RP}}
\newcommand*{\inner}[1]{\smash{\overset{\circ}{#1}}}
\newcommand*{\twist}[2][1]{#2(#1)}         % twisted coefficients
\renewcommand*{\H}{\operatorname{H}}        % cohomology
\newcommand*{\CH}{\operatorname{CH}}        % Chow groups 
\newcommand*{\Ch}{\operatorname{Ch}}        % Chow groups mod 2
\newcommand*{\CW}{\smash{\widetilde{\CH}}}  % Chow-Witt groups
\newcommand*{\GW}{\operatorname{GW}}        % Grothendieck-Witt ring
\newcommand*{\W}{\operatorname{W}}          % Witt ring
\newcommand*{\FI}{\operatorname{I}}         % fundamental ideal
\newcommand*{\FIb}{\operatorname{\bar{I}}}  % Mod2 fundamental ideal
\newcommand*{\I}{\mathbf{I}}                % (unramified) ideal sheaf
\newcommand*{\II}{\mathbb{I}}               % (unramified) ideal sheaf
\newcommand*{\Ib}{\bar{\mathbf{I}}}
\newcommand{\K}{\mathrm{K}}
\newcommand*{\KMW}{\mathbf{K}^{MW}}
\newcommand*{\KM}{\mathbf{K}^{M}}
\newcommand*{\tot}{\textnormal{tot}}
\newcommand{\red}{\mathrm{red}}             % reduction mod two
\begin{document}
%%%%%%%%%%%%%%%%%%%%%%%%%%%%%%%%%%%%%%%%%%%%%%%%%% 
%%%%%%%%%%%%%%%%%%%%%%%%%%%%%%%%%%%%%%%%%%%%%%%%%% 

\title{Chow-Witt rings of split quadrics}

\author{Jens Hornbostel}
\address{Jens Hornbostel,  Bergische Universit\"at Wuppertal, Gaußstraße 20, 42119 Wuppertal, Germany}
\email{hornbostel at math.uni-wuppertal.de}
\thanks{Research for this publication was conducted in the framework of the DFG Research Training Group 2240: Algebro-Geometric Methods in Algebra, Arithmetic and Topology.}

\author{Heng Xie}
\address{Heng Xie, Bergische Universit\"at Wuppertal, Gaußstraße 20, 42119 Wuppertal, Germany}
\email{sysuxieheng at gmail.com}
\thanks{The second author was supported by EPSRC Grant EP/M001113/1 and by the DFG Priority Programme 1786: Homotopy Theory and Algebraic Geometry.}

\author{Marcus Zibrowius}
\address{Marcus Zibrowius, Heinrich-Heine-Universit\"at D\"usseldorf, Universitätsstr.~1, 40225 Düsseldorf, Germany}
\email{marcus.zibrowius at cantab.net}

\maketitle

\begin{abstract}
  We compute the Chow-Witt rings of split quadrics over a field of characteristic not two.  We even
  determine the full bigraded $\I$-cohomology and
  Milnor-Witt cohomology rings, including twists by line bundles.
  The results on $\I$-cohomology corroborate the general philosophy that
  $\I$-cohomology is an algebro-geometric version of singular cohomology of real varieties:
  our explicit calculations confirm that the $\I$-cohomology ring of a split quadric over the
  reals is isomorphic to the singular cohomology ring of the space of its real points.  
\end{abstract}

\tableofcontents

\section{Introduction}
Chow-Witt groups were introduced by Barge and Morel \cite{BM00} as a cohomology theory containing an {\em Euler class}, detecting if a vector bundle over a smooth affine  scheme over a field of characteristic \(\neq 2\) splits off a trivial line bundle in the critical range. This generalized a previous result of Murthy \cite{Mu94} for Chow groups of smooth affine schemes over algebraically closed fields.  In recent years, subsequent work of Morel \cite{MField}, Fasel \cite{faselthesis}, Fasel-Srinivas \cite{FaselSrinivas} and Asok-Fasel \cite{AF} has completed the picture if two is  invertible.  As Chow-Witt groups are a quadratic refinement of Chow groups, one might hope for generalizations of other results using Chow-Witt groups in the future.

In this paper, we determine the Chow-Witt ring of a split quadric \(Q_n\) of dimension \(n \geq 3\) over a field \(F\) of characteristic \(\neq 2\). Our computation follows the general strategies laid out by Fasel, Wendt and the first author in the computations for projective spaces, Grassmannians and classifying spaces \cite{fasel:ij,HornbostelWendt,W18}.  The main step (in both descriptions) is thus the computation of $\I$-cohomology. We compute these cohomology groups in all bidegrees, and over an arbitrary smooth base (not just a field).  That is, we compute the group
\[
  \H^i(Q_n,\I^j,\Oo(l))
\]
for all values of \(i\), \(j\) and \(l\), where \(Q_n\) is a split quadric over a smooth scheme, and where \(\Oo(l)\) denotes the \(l^{\text{th}}\) tensor power of \(\Oo_Q(1)\) (see Section~\ref{sec:notation} for precise definitions).  This additive result is summarized in Theorem~\ref{thm:I-additive} below.

For the computation of the Chow-Witt ring, we need to understand the $\I$-cohomology groups in ``geometric'' bidegrees. With untwisted coefficients, their direct sum yields the graded ring \(\H^\bullet(Q_n, \I^\bullet):= \bigoplus_{i\geq 0} \H^i(Q_n, \I^i)\), which is a graded commutative algebra over the Witt ring \(\W(F)\) of the base field.  Adding twisted coefficients, this extends to a \(\ZZ \oplus \ZZII\)-graded \(\W(F)\)-algebra
\[
  \H^\bullet(Q_n, \I^\bullet, \Oo \oplus \Oo(1)) := \bigoplus\limits_{i\geq 0} \left(\H^i(Q_n, \I^i, \Oo) \oplus \H^i(Q_n, \I^i, \Oo(1))\right).
\]
The explicit ring structure can often be written down more concisely when twisted coefficients are taken into account.  Indeed, this is already true for projective spaces.  The following result is Proposition~4.5 in a recent preprint of Matthias Wendt \cite{W18}, based on the additive computations of Fasel \cite{fasel:ij}.
\begin{theorem*}[Fasel/Wendt]
  Let \(F\) be field of characteristic \(\neq 2\), with Witt ring \(\W(F)\) and fundamental ideal \(\FI(F)\subset \W(F)\). Let \(\PP^p\) denote the \(p\)-dimensional projective space over~\(F\).
  For any \(p \geq 1\), we have a \(\ZZ \oplus \ZZII\)-graded ring isomorphism
  \begin{align*}
    \H^\bullet(\PP^p, \I^\bullet, \Oo \oplus \Oo(1))
    &= \W(F)[\xi,\alpha]/(\FI(F)\xi, \xi^{p+1},\xi\alpha,\alpha^2)
      \quad\text{ with }
      \begin{cases}
	\deg{\xi} = \twistedbideg{1}{1}\\
	\deg{\alpha} = \twistedbideg{p}{p+1}
      \end{cases}
  \end{align*}
\end{theorem*}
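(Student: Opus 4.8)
The plan is to take Fasel's additive computation of the groups \(\H^i(\PP^p,\I^j,\Oo(l))\) as the starting input and to upgrade it to a ring isomorphism. In the ``geometric'' bidegrees (\(i=j\)) with twist parity \(\overline 0\) or \(\overline 1\), that input records the following \(\W(F)\)-module structure: a free rank-one summand \(\W(F)\cdot 1\) in bidegree \(\twistedbideg{0}{0}\); a copy of \(\ZZII = \W(F)/\FI(F)\) in each bidegree \(\twistedbideg{k}{k}\) for \(1\le k\le p\); a second free rank-one summand \(\W(F)\) in the top twisted bidegree \(\twistedbideg{p}{p+1}\); and zero in every other geometric bidegree. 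I would take \(\xi\) to be the \(\I\)-cohomology Euler class of \(\Oo(1)\), lying in bidegree \(\twistedbideg{1}{1}\), and \(\alpha\) to be a generator of the free summand \(\W(F)\) in bidegree \(\twistedbideg{p}{p+1}\). The problem then splits into (i) checking that the four defining relations hold, giving a well-defined \(\W(F)\)-algebra map from \(\W(F)[\xi,\alpha]/(\FI(F)\xi,\xi^{p+1},\xi\alpha,\alpha^2)\) to \(\H^\bullet(\PP^p,\I^\bullet,\Oo\oplus\Oo(1))\), and (ii) proving this map is bijective.

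For step (i), three of the relations are immediate from dimension. Since \(\dim\PP^p = p\), the Rost--Schmid complex computing \(\I\)-cohomology is concentrated in degrees \(0,\dots,p\), so \(\H^i(\PP^p,\I^\bullet,-)=0\) for \(i>p\); the products \(\xi^{p+1}\), \(\xi\alpha\) and \(\alpha^2\) all land in cohomological degree \(>p\) and hence vanish. The relation \(\FI(F)\xi=0\) is already contained in the additive input, because \(\H^1(\PP^p,\I^1,\Oo(1))\) equals \(\ZZII\) as a \(\W(F)\)-module, so \(\FI(F)\) annihilates \(\xi\), and therefore every power \(\xi^k\).

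Step (ii) is the substance of the argument, and the decisive point is the nonvanishing of \(\xi^k\) for \(1\le k\le p\). For this I would use the reduction map \(\I^\bullet \to \I^\bullet/\I^{\bullet+1} \cong \KM_\bullet/2\), which induces a ring homomorphism from \(\H^\bullet(\PP^p,\I^\bullet,\Oo\oplus\Oo(1))\) to the mod-two Chow ring \(\CH^\bullet(\PP^p)/2\cong\ZZII[\bar h]/(\bar h^{p+1})\) and sends \(\xi\) to the reduced hyperplane class \(\bar h\). Since \(\bar h^k\ne 0\) for \(k\le p\), we deduce \(\xi^k\ne 0\), so \(\xi^k\) generates the copy of \(\ZZII\) in bidegree \(\twistedbideg{k}{k}\). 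Together with \(\alpha\) generating the top free summand, the image of the map therefore contains a generator of every geometric bidegree, proving surjectivity. Injectivity then follows by comparing \(\W(F)\)-module structures bidegree by bidegree: the presented ring contributes precisely \(\W(F)\) in \(\twistedbideg{0}{0}\), precisely \(\ZZII\cdot\xi^k\) in each \(\twistedbideg{k}{k}\), and precisely \(\W(F)\cdot\alpha\) in \(\twistedbideg{p}{p+1}\), matching Fasel's computation term for term, so the surjection must be an isomorphism.

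The main obstacle I anticipate lies in the multiplicativity of the reduction to mod-two Chow and in the bookkeeping around the \(\ZZII\)-twist grading. Concretely, one must verify that the Euler class \(\xi\) really does reduce to \(\bar h\) and that the cup product on \(\I\)-cohomology is compatible with the reduction, so that the classical nonvanishing of \(\bar h^k\) can be transported back to \(\xi^k\). The twist grading plays an essential structural role here: it is exactly what keeps \(\xi^p\) (bidegree \(\twistedbideg{p}{p}\)) and \(\alpha\) (bidegree \(\twistedbideg{p}{p+1}\)) in distinct summands, so that no spurious relation identifies the two generators. Finally, pinning \(\alpha\) down explicitly --- for instance via the localization sequence for the closed immersion \(\PP^{p-1}\hookrightarrow\PP^p\), or as a pushforward of a fundamental class --- and confirming that it generates a free rather than a torsion \(\W(F)\)-module, is the other point that will require care.
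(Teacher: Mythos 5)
Your proposal is correct and follows essentially the same route as the paper's proof of Theorem~\ref{alternativeproof} (of which the stated result is the geometric-bidegree part): both start from Fasel's additive computation, obtain the relations \(\xi^{p+1}=\xi\alpha=\alpha^2=0\) for degree reasons and \(\FI(F)\xi=0\) from the \(\W(F)\)-module structure, detect the nonvanishing of \(\xi^k\) by reducing to \(\Ib\)-cohomology (the mod-two Chow ring), and pin down \(\alpha\) as the pushforward \(s_*(1)\) of a rational point so that the projection formula gives \(\alpha\cup p^*\phi = s_*(\phi)\) and hence the freeness of \(\W(F)\alpha\). The verifications you flag as requiring care are precisely the ones the paper carries out, namely the commutative diagrams relating \(\mu^m\), cup products and the reduction map.
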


We generalize this result for projective spaces to the full \(\ZZ\oplus\ZZ\oplus\ZZII\)-graded setting in Theorem~\ref{alternativeproof} below.  
For split quadrics, we obtain the following corresponding result (Theorems~\ref{thm:I-ring} and \ref{thm:I-ring-2}):

\begin{theoremA}
  For any \(n \geq 3\), we have a \(\ZZ \oplus \ZZII\)-graded ring isomorphism
  \begin{align*}
    \H^\bullet(Q_n, \I^\bullet, \Oo \oplus \Oo(1))
    &\cong 
      \begin{cases}
        \W(F)[\xi,\alpha,\beta]/(\FI(F)\xi,\xi^{p+1},\xi\alpha+\xi\beta,\alpha^2 + \beta^2,\alpha\beta) & \text{if \(n = 2p\) and \(p\) is even } \\
        \W(F)[\xi,\alpha,\beta]/(\FI(F)\xi,\xi^{p+1},\xi\alpha+\xi\beta,\alpha^2,\beta^2) & \text{if \(n =2p \) and \(p\) is odd } \\
        \W(F)[\xi,\alpha,\beta]/(\FI(F)\xi,\xi^{p+1},\xi\alpha,\alpha^2,\beta^2) & \text{if \(n=2p+1\)}
      \end{cases}
  \end{align*}
  Here, the generators are of degrees
  \(\deg{\xi}    = \twistedbideg{1}{1}\),
  \(\deg{\alpha} = \twistedbideg{p}{p+1}\),
  \(\deg{\beta}  = \twistedbideg{q}{q+1}\) where \(q = p\) if \(n=2p\) is even and \(q = p+1\) if \(n=2p+1\) is odd.
\end{theoremA}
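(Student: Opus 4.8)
The plan is to treat the additive computation of Theorem~\ref{thm:I-additive} as the target and to exhibit an explicit surjection onto it from the presented ring, which is then shown to be bijective by a rank count in each bidegree. First I would identify the three generators geometrically. Writing \(i\colon Q_n \hookrightarrow \PP^{n+1}\) for the closed embedding, I would set \(\xi := i^*\xi_{\PP}\), the restriction of the degree \(\twistedbideg{1}{1}\) generator of \(\H^\bullet(\PP^{n+1},\I^\bullet,\Oo\oplus\Oo(1))\) supplied by Theorem~\ref{alternativeproof}. The classes \(\alpha\) and \(\beta\) are the \(\I\)-cohomological analogues of the classes of the maximal linear subspaces: for \(n=2p\) they are the two rulings, both living in bidegree \(\twistedbideg{p}{p+1}\), while for \(n=2p+1\) they are the unique middle-dimensional ruling (degree \(\twistedbideg{p}{p+1}\)) together with a class in degree \(\twistedbideg{p+1}{p+2}\). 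Using the additive result one checks that \(\xi,\alpha,\beta\) together generate the \(\W(F)\)-algebra.

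Second, I would establish the relations involving only \(\xi\). The relation \(\FI(F)\xi = 0\) is immediate from the corresponding relation on \(\PP^{n+1}\) by applying the ring homomorphism \(i^*\). The vanishing \(\xi^{p+1}=0\) is not inherited from projective space and must be read off from the additive structure: the class \(\xi^{p+1}\) lies in bidegree \(\twistedbideg{p+1}{p+1}\), and Theorem~\ref{thm:I-additive} shows that the relevant graded piece either vanishes or is spanned by classes already accounted for, forcing \(\xi^{p+1}=0\).

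Third, and this is where the genuine work lies, I would pin down the products among \(\alpha\), \(\beta\) and \(\xi\). Here the main tool is the family of short exact sequences \(0\to\I^{j+1}\to\I^j\to\KM_j/2\to 0\) and the resulting long exact sequences, which relate \(\I\)-cohomology to the mod-\(2\) Chow groups \(\Ch^\bullet(Q_n)\) whose ring structure is classical. A product such as \(\alpha^2\), \(\beta^2\), \(\alpha\beta\), \(\xi\alpha\) or \(\xi\beta\) is first computed after reduction to \(\Ch^\bullet(Q_n)\), where it is governed by the intersection pattern of the two rulings, which depends on the parity of \(p\); the residual ambiguity in lifting back to \(\I\)-cohomology is controlled by the additive ranks. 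This is exactly the mechanism that produces the case distinction: for \(n=2p\) one obtains \(\xi\alpha+\xi\beta=0\) together with \(\alpha^2+\beta^2=0,\ \alpha\beta=0\) when \(p\) is even and \(\alpha^2=\beta^2=0\) when \(p\) is odd, whereas for \(n=2p+1\) one gets \(\xi\alpha=0\) and \(\alpha^2=\beta^2=0\). The delicate point throughout is that the self-intersections are Witt-theoretic Euler-class computations rather than classical ones, so signs and the distinction between \(\ZZ\)- and \(\W(F)\)-coefficients must be tracked carefully.

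Finally, having shown that the stated generators and relations hold, I obtain a surjection from the presented ring onto \(\H^\bullet(Q_n,\I^\bullet,\Oo\oplus\Oo(1))\). Comparing the rank of each \(\ZZ\oplus\ZZII\)-graded summand of the presented ring with the additive answer of Theorem~\ref{thm:I-additive} shows this surjection to be an isomorphism. I expect the main obstacle to be the determination of the products \(\alpha^2,\beta^2,\alpha\beta\) and the associated parity dichotomy, since distinguishing the even-\(p\) from the odd-\(p\) behaviour for \(n=2p\) requires a genuine quadratic refinement of the self-intersection of a maximal linear subspace that is invisible to the classical Chow-theoretic computation.
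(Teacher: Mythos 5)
Your skeleton --- choose generators, verify the relations, and conclude by matching ranks against Theorem~\ref{thm:I-additive} --- agrees with the paper's (Theorem~A is the geometric-bidegree part of Theorems~\ref{thm:I-ring} and~\ref{thm:I-ring-2}), and the relations you propose to check by mod-$2$ reduction are indeed established that way in the paper whenever the reduction map is injective on the relevant group: this covers $\FI(F)\xi$, $\xi^{p+1}$ (the target group actually vanishes) and $\xi\alpha+\xi\beta$ for $n=2p$, while $\alpha^2=\beta^2=0$ for $n=2p+1$ holds for degree reasons. The genuine gap is in your third step. For $n=2p$ the products $\alpha^2$, $\beta^2$, $\alpha\beta$ all live in $\H^{2p}(Q_{2p},\I^{2p},\Oo(2p-2))\cong\FI^0(F)=\W(F)$, a free $\W(F)$-module of rank one on which mod-$2$ reduction has kernel $\FI(F)\cdot\gamma$; so reducing to $\Ch^\bullet(Q_{2p})$ determines these products only modulo the fundamental ideal, and your assertion that ``the residual ambiguity \dots is controlled by the additive ranks'' is false --- the additive rank is the same for every lift. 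Concretely, for odd $p$ reduction gives only $\alpha^2,\beta^2\in\FI(F)\gamma$ where the theorem asserts $\alpha^2=\beta^2=0$, and for even $p$ it gives $\alpha\beta\in\FI(F)\gamma$ and $\alpha^2+\beta^2\in\FI(F)\gamma$ where exact vanishing is claimed; already over $F=\RR$ this leaves each product undetermined up to an arbitrary element of $2\ZZ\subset\ZZ=\W(\RR)$, which is exactly where the parity dichotomy is decided. You name this as the main obstacle, but supply no mechanism that overcomes it.

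For the record, the paper's mechanism is the following, and your proof would have to import something equivalent. One defines $\alpha=(\iota_{x'})_*(1_{x'})$ and $\beta=(\iota_x)_*(1_x)$ as pushforwards from the two rulings. Products of such pushforwards along subspaces meeting transversally in a point are evaluated exactly, as $s_*$ of a unit, by the base change formula (Theorem~\ref{thm:base-change-formula}) together with the projection formula; products along disjoint subspaces vanish exactly, because $\iota_x^*(\iota_y)_*=0$ by the localization sequence. Explicit $\AA^1$-homotopies (diagram~\eqref{diag:homotopy}) then move each ruling onto either a disjoint or a transversal one --- which of the two happens depends on the parity of $p$, and this is the true source of the dichotomy. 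Finally, the relation $\alpha^2=-\beta^2$ for even $p$ comes from an involution $\tau$ of $Q_{2p}$ exchanging the rulings, whose action on $\H^{2p}(Q_{2p},\I^{2p},\Oo(2p-2))$ is shown to be $-\id$ by a normal-bundle determinant computation at a fixed point in $\PP^{2p+1}$. That sign is invisible in $\Ch^\bullet(Q_{2p})$ and is precisely the quadratic refinement your proposal leaves open.
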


In fact, Theorems \ref{thm:I-ring} and \ref{thm:I-ring-2} again describe the full \(\ZZ\oplus\ZZ\oplus\ZZII\)-graded \(\I\)-cohomology of \(Q_n\).  Finally, for the Chow-Witt ring, we arrive at two related presentations.  The first presentation, discussed in Section~\ref{sec:CW-ring}, looks as follows. 
\begin{theoremB}
  The \(\ZZ\oplus\ZZII\)-graded \(\GW(F)\)-algebra \(\CW^\bullet(Q_{n},\Oo \oplus \Oo(1))\)  can be described as a fibre product of \(\ZZ\oplus\ZZII\)-graded rings of the following form:
  \[
    \CW^\bullet(Q_{n},\Oo \oplus \Oo(1))\stackrel{\cong}{\to} \H^\bullet(Q_{n},\I^\bullet,\Oo \oplus \Oo(1))\times_{\Ch^\bullet(Q_{n},\Oo\oplus\Oo(1))} (\ker \partial \oplus \ker \partial_{\Oo(1)}).
  \]
  Here, the second factor \(\ker \partial \oplus \ker \partial_{\Oo(1)}\) is described explicitly in Theorem~\ref{thm:CW-ring}.  The fibre product is taken over the \(\ZZ\oplus\ZZII\)-graded ring \(\Ch^{\bullet}(Q_{n},\Oo\oplus\Oo(1)) := \Ch^\bullet(Q_{n})[\tau]/(\tau^2-1)\), where \(\Ch^\bullet(Q_{n})\) is the (well-known) Chow-ring modulo two of \(Q_n\), concentrated in degrees \((*,0)\), and \(\tau\) is an artificial generator of bidegree \((0,1)\).
\end{theoremB}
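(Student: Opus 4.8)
The plan is to obtain the fibre product from the fundamental Cartesian square of Nisnevich sheaves due to Morel, in which \(\KMW_j\) is the pullback of the two reduction maps \(\KM_j \to \KM_j/2 \leftarrow \I^j\) (with \(\I^j\) and \(\KMW_j\) twisted by the relevant line bundle, the Milnor sheaves being twist-independent). As both maps onto \(\KM_j/2\) are surjective, this square yields a short exact sequence of sheaves
\[
  0 \to \KMW_j \to \KM_j \oplus \I^j \to \KM_j/2 \to 0 .
\]
Applying \(\H^\bullet(Q_n,-)\) with both twists \(\Oo\) and \(\Oo(1)\) and summing over the geometric bidegrees produces a single long exact (Mayer--Vietoris) sequence relating \(\CW^\bullet(Q_n,\Oo\oplus\Oo(1))\), the \(\I\)-cohomology ring of Theorem~A, the integral Chow ring \(\CH^\bullet(Q_n)\) and the mod-two Chow ring \(\Ch^\bullet(Q_n)\).

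First I would record the maps that enter the fibre product. The sheaf map \(\I^j \to \KM_j/2\) gives the reduction \(\bar\rho\colon \H^\bullet(Q_n,\I^\bullet,\Oo\oplus\Oo(1)) \to \Ch^\bullet(Q_n,\Oo\oplus\Oo(1))\), where the twist-tracking variable \(\tau\) of bidegree \((0,1)\) records the \(\Pic/2 = \ZZII\)-grading and satisfies \(\tau^2 = 1\) because \(\Oo(1)^{\otimes 2}\) is a square. The boundary maps \(\partial\) and \(\partial_{\Oo(1)}\) emanating from \(\CH^\bullet(Q_n)\) are the composites of reduction mod two with the \(\I\)-cohomology Bocksteins \(\Ch^\bullet(Q_n) \to \H^{\bullet+1}(Q_n,\I^{\bullet+1},\Oo)\) and \(\Ch^\bullet(Q_n) \to \H^{\bullet+1}(Q_n,\I^{\bullet+1},\Oo(1))\) attached to the exact sequence \(0\to \I^{j+1}\to\I^j\to\KM_j/2\to 0\). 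Since the composite of \(\bar\rho\) with either Bockstein vanishes by exactness, the reduction \(\CW^\bullet \to \CH^\bullet(Q_n)[\tau]/(\tau^2-1)\) factors through the subgroup \(\ker\partial\oplus\ker\partial_{\Oo(1)}\), and together with \(\CW^\bullet \to \H^\bullet(\I^\bullet)\) it assembles into the asserted map to the fibre product.

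The substance is to prove this assembled map is an isomorphism. By exactness of the Mayer--Vietoris sequence the image of \(\CW^i\) in \(\CH^i \oplus \H^i(Q_n,\I^i)\) is precisely the fibre-product subgroup \(\ker\bigl(\nu\colon(a,x)\mapsto \bar a-\bar\rho(x)\bigr)\), so surjectivity onto the fibre product is automatic; what remains is injectivity, equivalently the vanishing of the preceding connecting homomorphism \(\delta\colon \H^{i-1}(Q_n,\KM_i/2)\to\CW^i\) in every geometric bidegree and both twists. Here I would feed in the two essential inputs: the explicit additive \(\I\)-cohomology of \(Q_n\) (Theorem~\ref{thm:I-additive}/Theorem~A) and the classical fact that \(\CH^\bullet(Q_n)\) is a free abelian group, so that \(\Ch^\bullet(Q_n)=\CH^\bullet(Q_n)/2\) and the integral Bockstein vanishes. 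Using the cellular structure to control the off-diagonal groups \(\H^{i-1}(Q_n,\KM_i/2)\), one checks degree by degree that \(\delta=0\). The main obstacle will be the middle degree of an even quadric \(Q_{2p}\), where the Chow groups and the \(\I\)-cohomology each carry two generators (the classes \(\alpha,\beta\)) and the relations of Theorem~A such as \(\alpha^2+\beta^2\), \(\alpha\beta\) must be reconciled with the integral Chow relations; this is exactly the locus where a naive fibre product could fail, and where the explicit description of \(\ker\partial\) in Theorem~\ref{thm:CW-ring} carries out the bookkeeping.

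Finally, for the ring structure I would note that Morel's square is a square of sheaves of rings and that cup products are compatible with all four sheaf maps; hence \(\bar\rho\), the two reductions and the comparison map are ring homomorphisms, and the bijection above upgrades to an isomorphism of \(\ZZ\oplus\ZZII\)-graded \(\GW(F)\)-algebras onto the fibre product of rings. The second factor acquires its multiplication as the subring \(\ker\partial \oplus (\ker\partial_{\Oo(1)})\,\tau\) of \(\CH^\bullet(Q_n)[\tau]/(\tau^2-1)\) --- closed under products precisely because it is the image of the ring map \(\CW^\bullet \to \CH^\bullet(Q_n)[\tau]/(\tau^2-1)\) --- and the fibre product is formed over \(\Ch^\bullet(Q_n,\Oo\oplus\Oo(1)) = \Ch^\bullet(Q_n)[\tau]/(\tau^2-1)\), as claimed.
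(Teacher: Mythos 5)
Your proposal is correct and follows essentially the same route as the paper: the paper simply invokes the general fibre-product criterion of Hornbostel--Wendt (Proposition~\ref{prop:HW}, whose proof is exactly the Mayer--Vietoris argument for the cartesian square of sheaves that you spell out), verifies its hypothesis via the two-torsion-freeness of \(\CH^\bullet(Q_n)\) from Theorem~\ref{CHquadric}, and notes that the argument carries over to twisted coefficients. Your observation that freeness of \(\CH^\bullet(Q_n)\) kills the integral Bockstein, hence the connecting map \(\delta\), is the whole injectivity argument --- no further degree-by-degree check in the middle degree of \(Q_{2p}\) is actually needed for Theorem~B itself (that bookkeeping only enters the explicit determination of \(\ker\partial_{\LC}\) in Theorem~\ref{thm:CW-ring}).
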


The second presentation of the Chow-Witt ring is discussed in the final section.  It gives a full additive description of the \(\ZZ \oplus \ZZ \oplus \ZZII\)-graded Milnor-Witt cohomology \(\H^\tot(Q_n, \KMW)\) including all twists, refining \cite[Theorem~11.7]{fasel:ij} from projective spaces to split quadrics. The arguments presented in Section~\ref{multiplicativecomp} can also be refined to obtain a description of \(\H^\tot(Q_n, \KMW)\) as an algebra in terms of generators and relations over the coefficient ring \(\H^\tot(F, \KMW)\), but we do not make this explicit here.

\medskip

For a real variety \(X\), $\I$-cohomology is closely related to the singular cohomology of the real points \(X(\RR)\) equipped with the analytic topology.  The strongest general result in this direction is the following theorem of Jacobson \cite{jacobson}:
\begin{theorem*}[Jacobson]
  For a real variety \(X\), the group \(\H^i(X,\I^j)\) is isomorphic to the singular cohomology group \(\H^i(X(\RR),\ZZ)\) in all bidegrees with \(j > \dim(X)\). 
\end{theorem*}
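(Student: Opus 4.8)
The plan is to compute $\H^i(X,\I^j)$ through the Gersten--Rost--Schmid resolution of the sheaf $\I^j$ and then to show that, in the stated range, multiplication by the class $\rho := \langle\langle -1\rangle\rangle \in \FI(\RR)$ trivialises the dependence on $j$ and identifies the answer with the cohomology of the ``real-\'etale'' incarnation of the constant sheaf $\ZZ$, which by Scheiderer's comparison theorem is precisely $\H^i(X(\RR),\ZZ)$. Throughout I use that the Gersten complex has length $\dim X$, so that both $\H^i(X,\I^j)$ and the target vanish for $i>\dim X$ and only degrees $i\le\dim X$ are at issue.

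First I would record the purely field-theoretic input. Over any field $\kappa$ of characteristic $\neq 2$, the affirmed Milnor conjecture (Orlov--Vishik--Voevodsky) gives $\I^m/\I^{m+1}\cong \KM_m/2 = \H^m_{\mathrm{et}}(\kappa,\ZZII)$, and under this identification multiplication by $\rho$ on the associated graded becomes the $\sqrt{-1}$-Gysin map on mod-$2$ \'etale cohomology. Using the localisation sequence for $\kappa\subset\kappa(\sqrt{-1})$ together with the vanishing $\H^m_{\mathrm{et}}(\kappa(\sqrt{-1}),\ZZII)=0$ for $m>\operatorname{cd}_2(\kappa(\sqrt{-1}))$, and invoking the Arason--Pfister Hauptsatz $\bigcap_m\I^m(\kappa)=0$ to keep the (infinite) filtration separated, one concludes that multiplication by $\rho$ is an isomorphism $\I^m(\kappa)\xrightarrow{\ \cong\ }\I^{m+1}(\kappa)$ whenever $m>\operatorname{cd}_2(\kappa(\sqrt{-1}))$. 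When $\kappa$ is finitely generated of transcendence degree $t$ over $\RR$ one has $\operatorname{cd}_2(\kappa(\sqrt{-1}))=t$, so the threshold reads simply $m>t$.

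Next I would globalise. The Gersten--Rost--Schmid complex computing $\H^i(X,\I^j)$ has terms
\[
  C^i(X,\I^j)=\bigoplus_{x\in X^{(i)}}\I^{j-i}(\kappa(x)),\qquad \I^{\le 0}:=\W ,
\]
and multiplication by $\rho$ defines a map of complexes $C^\bullet(X,\I^j)\to C^\bullet(X,\I^{j+1})$. For a point $x$ of codimension $i$ the residue field $\kappa(x)$ has transcendence degree $\dim X-i$ over $\RR$, so the previous paragraph makes $\cdot\rho$ an isomorphism on the summand at $x$ exactly when $j-i>\dim X-i$, i.e.\ when $j>\dim X$. Hence for $j>\dim X$ the map $\cdot\rho$ is a \emph{termwise} isomorphism of Gersten complexes, yielding isomorphisms $\H^i(X,\I^j)\cong\H^i(X,\I^{j+1})$ in every degree $i$. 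Consequently $\H^i(X,\I^j)\cong\operatorname{colim}_{j'}\H^i(X,\I^{j'})=\H^i(X,\mathcal W_\rho)$, where $\mathcal W_\rho:=\operatorname{colim}(\W\xrightarrow{\rho}\I\xrightarrow{\rho}\I^2\to\cdots)$ and the colimit commutes with cohomology because $X$ is Noetherian of finite dimension.

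Finally I would identify the target. Stalkwise, $\operatorname{colim}_m\I^m(\kappa)$ under $\cdot\rho$ is the group $C(\mathrm{Sper}\,\kappa,\ZZ)$ of continuous $\ZZ$-valued functions on the real spectrum; it is $\ZZ$ for $\kappa$ real closed and $0$ for $\kappa$ non-real, the general case assembled via total signatures. Thus $\mathcal W_\rho$ is the real-\'etale avatar of the constant sheaf $\ZZ$, and Scheiderer's identification of real-\'etale cohomology with sheaf cohomology on $X(\RR)$ gives $\H^i(X,\mathcal W_\rho)=\H^i_{\mathrm{ret}}(X,\ZZ)\cong\H^i(X(\RR),\ZZ)$, as required. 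The main obstacle is exactly this real-\'etale comparison together with the sharp stabilisation bound: one must (a) control multiplication by $\rho$ on $\I^\bullet(\kappa)$ via the Milnor conjecture and the $2$-cohomological dimension of $\kappa(\sqrt{-1})$ so that the threshold lands precisely at $j>\dim X$, and (b) identify the $\rho$-stabilised Witt sheaf with the constant sheaf on the real points, for which Scheiderer's theorem is the essential input.
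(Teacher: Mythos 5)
The paper does not prove this statement: it is quoted verbatim from Jacobson \cite{jacobson}, so there is no in-paper argument to compare against. Your proposal is, in outline, a faithful reconstruction of Jacobson's actual proof: termwise analysis of the Gersten--Rost--Schmid complex, the threshold \(j-i>\operatorname{trdeg}(\kappa(x)/\RR)\) collapsing to the uniform bound \(j>\dim X\), stabilisation of the tower \(\I^j\to\I^{j+1}\to\cdots\) under multiplication by \(\langle\langle-1\rangle\rangle\), identification of the colimit sheaf with the pushforward of \(\ZZ\) from the real spectrum, and Scheiderer--Delfs to land in \(\H^i(X(\RR),\ZZ)\). Two points deserve more care than your sketch gives them. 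First, the field-theoretic input ``\(\cdot\rho\colon\I^m(\kappa)\to\I^{m+1}(\kappa)\) is an isomorphism for \(m>\operatorname{cd}_2(\kappa(\sqrt{-1}))\)'' does not follow from the Arason exact sequence plus Arason--Pfister alone: separatedness of the filtration \(\bigcap_m\I^m=0\) gives injectivity, but for surjectivity the identity \(\I^{m+1}=\rho\,\I^m+\I^{m+2}\) only yields \(\I^{m+1}=\rho\,\I^m+\I^{m+1+N}\) for all \(N\), and passing to the limit requires the finer structure results for fields of finite virtual \(2\)-cohomological dimension (e.g.\ torsion-freeness of \(\I^m\) in high degrees and control of the image of the total signature, due to Arason--Elman--Jacob and used by Jacobson). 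Second, computing \(\H^i(X,\I^j)\) by the Gersten complex presupposes smoothness (or at least the Gersten conjecture for the sheaves \(\I^j\)), which you should state as a hypothesis alongside ``real variety''. With those two caveats made explicit, the architecture is sound and coincides with the cited source.
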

Note that this general result does not apply to the interesting geometric bidegrees \(i=j\) with \(i\leq \dim(X)\).
However, the known computations for projective spaces and Grassmannians indicate that the situation for cellular varieties is even better.  In forthcoming work with Wendt \cite{HWXZ}, we will show that realization induces a ring isomorphism \(\H^\bullet(X,\I^\bullet)\cong \H^\bullet(X(\RR),\ZZ)\) for all smooth cellular real varieties \(X\).  This isomorphism also extends to twisted coefficients.  Here, we verify this result for split real quadrics, through explicit computations.

Unfortunately, we could not find the integral cohomology ring of a real split quadric in the literature.  All we found was the additive structure of cohomology with \(\ZZII\)-coefficients, which can be computed using the Gysin sequence, see \cite[\S\,24.39, problem 10]{Dieudonne}, \cite{WuLi} or \cite{CGT}.  We therefore include computations of the integral cohomology rings of real quadrics as a preliminary section in this paper.  These computations cover all real quadrics, not just the split ones, and also include twisted coefficients.  They are completely independent of the computations of $\I$-cohomology that follow, thus providing a baseline.  The results are summarized in Theorem~\ref{top:integral-ring}.  The computations for split quadrics extend from Chow groups not only to Milnor cohomology (Corollary~\ref{coro:MilnorQ}), but also to full bigraded motivic cohomology \cite[Prop. 4.3]{DI:Hopf}.  See \cite{kerz} or \cite[Section~3]{RS} for a comparison of these two bigraded theories. We expect that our techniques and results on Milnor-Witt cohomology groups of split quadrics also extend to the full bigraded ``Milnor-Witt motivic cohomology'' (also known as ``generalized motivic cohomology'') as developed by Fasel et al., see e.g.\ \cite[Theorem~4.2.4]{DF17} and \cite{calmesfasel}, but we have not pursued this.

There are many partial results on the Chow groups of quadrics by Swan, Karpenko, Merkurjev, Rost, Vishik and others. Partial results on Witt groups of split quadrics of Nenashev \cite{Ne} can be completed using the blowup setup of Balmer-Calm\`{e}s that we also use in our computations here  \cite{Ca08}. Witt groups of split quadrics over the complex numbers have been computed by the third author via another method, see \cite{Zib11}. 
We hope to return to computations of the \(\I\)-cohomology of other kinds of quadrics in future work. The trace map on Clifford algebras has been found very helpful for studying Witt groups of general quadrics, cf.\ the recent work of the second author \cite{Xie19}.

\subsection{Acknowledgements} We would like to thank the anonymous referee for carefully reading our manuscript and suggesting many improvements.

\section{Conventions and notation}\label{sec:notation}
\subsection{Topology}
We fix the following notation:
\begin{itemize}
\item \(Q_{p,q}\) is the \((p+q)\)-dimensional \textbf{real quadric} defined as a subspace of \(\RP^{p+q+1}\) by the equation
  \begin{equation}\label{eq:Q-squares}
    x_0^2 + x_1^2 + \cdots + x_p^2 = y_0^2 + y_1^2 + \cdots + y_q^2,
  \end{equation}
  where \([x_0:\ldots:x_p:y_0:\ldots:y_q]\) are homogeneous coordinates on \(\RP^{p+q+1}\). Note that  \(Q_{p,q}\cong Q_{q,p}\). We always assume \(1 \leq p\leq q\). 
\end{itemize}
Additional notation is introduced at the beginning of Section~\ref{sec:real-quadrics}.

\subsection{Algebraic geometry}\label{notalggeom}
We fix a base scheme \(S\) which is smooth over a field \(F\) of characteristic \(\neq 2\). More precisely, \(S\) is assumed to be noetherian, separated, smooth and of finite type over \(F\).  We may and will moreover assume without loss of generality that \(S\) is connected.   These are precisely the assumptions on the base scheme \(S\) used in \cite{fasel:ij}.  The following schemes and morphisms are all defined over~\(S\):

\begin{itemize}
\item \(Q_n\) denotes the \(n\)-dimensional \textbf{split quadric}, i.e.\ the closed subvariety of \(\PP^{n+1}\) defined by the equation
  \begin{equation}\label{eq:Q-split}
    \begin{cases}
      x_0y_0 + \ldots + x_p y_p = 0  & \text{ when } n = 2p	\\
      x_1y_1 + \ldots + x_p y_p + z^2 = 0 & \text{ when }  n = 2p+1
    \end{cases}
  \end{equation}
  When \(n = 2p\), \(Q_n\) is isomorphic to the subvariety \(Q_{p,p}\) defined by equation \eqref{eq:Q-squares}. When \(n= 2p+1\), it is isomorphic to \(Q_{p,p+1}\).
\item \(\PPpX\), \(\PPpY\), \(\PP_{x'}^p\) and \(\PP_{y'}^p\) denote the following subvarieties of \(Q_n\):
  
  When \(n = 2p\), we define \(\PPpX,\PPpY \subset \PP^{2p+1}\) by the equations \(y_0 = y_1 = \ldots = y_p = 0 \) and \(x_0 = x_1 = \ldots = x_p = 0 \) respectively.  We define \(\PP_{x'}^p, \PP_{y'}^p \subset\PP^{2p+1}\) by the equations \(x_0 = y_1 = \ldots = y_p = 0 \) and \(y_0 = x_1 = \ldots = x_p = 0 \). 

  When \(n = 2p+1\), we define \(\PPpX, \PPpY \subset \PP^{2p+2}\) by the equations \(y_0 = y_1 = \ldots = y_p = z = 0 \) and \(x_0 = x_1 = \ldots = x_p = z = 0\), respectively.
\item 
  \(q \colon Q_n \rightarrow S \) and \(p \colon \PP^p \to S\) denote the projection maps
\item 
  \(i_y\colon \PPpY \hookrightarrow Q_n-\PPpX\), \(\iota_y\colon \PPpY \hookrightarrow Q_n\), \(j\colon  Q_n - \PPpX \hookrightarrow Q_n\) and \(i\colon Q_n \hookrightarrow \PP^{n+1}\) are the obvious embeddings; likewise for \(i_x\), \(\iota_x\), \(i_{x'}\), \(\iota_{x'}\), \(i_{y'}\) and \(\iota_{y'}\)
\item
  \(\rho\colon Q_n -\PPpX \to \PPpY\) is the morphism given by 
  \[
    \begin{cases}
      \rho\colon [x_0:\ldots:x_p:y_0:\ldots:y_p]~ \mapsto~ [0:\ldots:0:y_0:\ldots:y_p]& \text{ if } n = 2p\\
      \rho\colon [x_0:\ldots:x_p:y_0:\ldots:y_p:z]\mapsto [0:\ldots:0:y_0:\ldots:y_p:0] &\text{ if } n = 2p+1
    \end{cases}
  \]
  (Some of this notation is summarized in Figure~\ref{fig:geometry}.)
\item
  \(\Oo_Q(1)\) is defined as the restriction along \(i\colon Q_n\hookrightarrow \PP^{n+1}\) of the canonical line bundle \(\Oo(1)\) on \(\PP^{n+1}\). When there is no danger of confusion, we simply denote this restriction again by \(\Oo(1)\). 
\end{itemize}

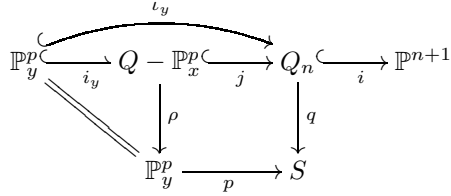
\begin{figure}[h]
  \begin{equation*}
    \begin{aligned}
      \xymatrix{
        \PPpY  \ar@^{^{(}->}[r]_{i_y} \ar@{^{(}->}@/^1pc/@<3pt>[rr]^{\iota_y} \ar@{=}[dr] & Q-\PPpX \ar[d]^{\rho} \ar@{^{(}->}[r]_-{j} & Q_n \ar[d]^q \ar@{^{(}->}[r]_{i} & \PP^{n+1} \\
        & \PPpY \ar[r]_{p} & S
      }
    \end{aligned}
  \end{equation*}
  \caption{Various morphisms used in the algebro-geometric calculations}\label{fig:geometry}
\end{figure}

\section{Singular cohomology of real quadrics}\label{sec:real-quadrics}
In this section, we study the real quadrics \(Q_{p,q}\subset \RP^{p+q+1}\) (with the analytic topology).  Note first that \(Q_{p,q}\) is a two-fold covering space of \(\RP^p\times \RP^q\), via the obvious map that takes \([x:y]\) to \(([x],[y])\).  Write \(\pi_1\) and \(\pi_2\) for the two components of this map.  A two-fold cover of the quadric itself is given by \(S^p\times S^q\):  the real quadric \(Q_{p,q}\) is the quotient of \(S^p\times S^q\) modulo the involution \(\tau := \tau_p\times\tau_q\), where \(\tau_n\) denotes the involution of \(S^n\) sending \(x\) to \(-x\).  The composition of the two covering maps is the canonical four-fold cover of \(\RP^p\times \RP^q\).  The situation is summarized by the central vertical column of Figure~\ref{fig:topology}. Also displayed there are the ``diagonal'' embedding \(\Delta^S\colon S^p \hookrightarrow S^p \times S^q\) that sends \(x\) to \((x,(x,0))\) and the two induced maps \(\Delta\colon \RP^p\to Q_{p,q}\) and \(\RP^p\to \RP^p\times \RP^q\), respectively.  Note that \(\Delta\) splits the projection \(\pi_1\colon Q_{p,q}\to \RP^p\).

\begin{figure}[h]
  \begin{equation*}
    \begin{aligned}
      \xymatrix{
        S^p \ar[dd] \ar[r]_-{\Delta^S} \ar@{=}@/^1pc/[rr]^{}
        & S^p\times S^q \ar[r]_{\pi^S_1} \ar[d]|{\strut\pi} 
        & S^p \ar[dd]\\
        & Q_{p,q} \ar@/^/[dr]^{\pi_1} \ar[d]|{\strut\pi_1\times\pi_2} \\
        \RP^p \ar[r]  \ar@{=}@/_1pc/[rr]_{} \ar@/^/[ur]^{\Delta} & \RP^p\times \RP^q \ar[r] & \RP^p
      }
    \end{aligned}
  \end{equation*}
  \caption{The various continuous maps relating spheres, projective spaces and real quadrics}\label{fig:topology}
\end{figure}
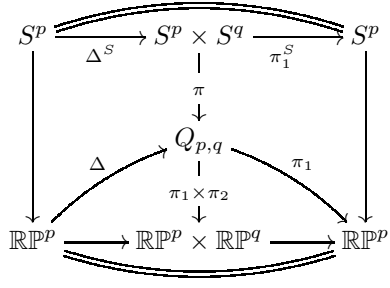

\subsection{Twisted coefficients}\label{sec:top-twists}
A free rank one local coefficient system over a path-connected space \(X\) can be defined as an action of the fundamental group on \(\ZZ\), i.e.\ as a group homomorphism \(\pi_1(X,x)\to \Aut(\ZZ)=\{\pm 1\}\), for any choice of base point \(x\) \cite[\S\,5.1]{davis-kirk}\cite[Ex.~I.F]{Spanier}. As is customary, we denote the trivial rank one local coefficient system corresponding to the constant group homomorphism \(\pi_1(X,x)\to \Aut(\ZZ)\)  simply by \(\ZZ\).  For semi-locally simply connected \(X\), such coefficient systems correspond to fibre bundles with typical fibre \(\ZZ\) and structure group \(\{\pm 1\}\) \cite[Lemma~4.7]{davis-kirk}.  Equivalently, we may identify them with principal \(\{\pm 1\}\)-bundles, i.e.\ with two-fold coverings of \(X\). 

We write \(\twist{\ZZ}\) for the rank one local coefficient system \(\rho\colon \pi_1(\RP^p)\to \{\pm 1\}\) on \(\RP^p\) corresponding to the two-fold cover \(S^p\to \RP^p\).  For \(p \geq 2\), the two-fold cover is the universal cover, and \(\rho\) is an isomorphism, but the system \(\twist{\ZZ}\) also exists for \(p = 1\).  The situation for the real quadrics is similar.  We have a canonical two-fold cover \(S^p\times S^q\to Q_{p,q}\), and we again write \(\twist{\ZZ}\) for the corresponding local coefficient system \(\pi_1(Q_{p,q}) \to \{\pm 1\}\).  Again, \(S^p\times S^q\) is the universal cover for \(q\geq p \geq 2\).  Note that \(\twist{\ZZ}\) pulls back to \(\twist{\ZZ}\) under the maps \(\pi_1\) and \(\Delta\) in Figure~\ref{fig:topology}, so the notation is consistent.

More generally, given a coefficient ring \(R\) and an integer \(s\), we define \(\twist[s]{R} := R\otimes_{\ZZ} \twist{\ZZ}^{\otimes s}\) with the action of the fundamental group induced by the trivial action on \(R\) and the above action on \(\ZZ(1)\).  Of course, this really only depends on \(R\) and the value of \(s\) mod \(2\).  Also note that \(\ZZII(s) = \ZZII\) for all~\(s\). The direct sum \(R\oplus \twist{R}\) is a \(\ZZII\)-graded ring, and hence cohomology with local coefficients in \(R\oplus \twist{R}\) is a \(\ZZ\oplus\ZZII\)-graded ring, which decomposes additively as 
\[
  \H^\bullet(Q_{p,q},R\oplus \twist{R}) = \H^\bullet(Q_{p,q},R)\oplus \H^\bullet(Q_{p,q},\twist{R}).
\]
Elements \(\alpha\in \H^i(Q_{p,q},R)\) have degree \(\deg{\alpha} = \twistedbideg{i}{0}\); elements 
\(\beta \in \H^i(Q_{p,q},\twist{R})\) have degree \(\deg{\beta} = \twistedbideg{i}{1}\).  
We sometimes also view \(R\oplus R\) as a \(\ZZII\)-graded coefficient ring for \(S^p\times S^q\) and 
\[
  \H^\bullet(S^p\times S^q,R\oplus R) = \H^\bullet(S^p\times S^q,R)\oplus \H^\bullet(S^p\times S^q,R) 
\]
as a \(\ZZ\oplus\ZZII\)-graded ring, in the evident way.

\subsection{Rational cohomology ring}\label{sec:rational-cohomology}
The description of \(Q_{p,q}\) as a quotient of \(S^p\times S^q\) implies that \(\H^\bullet(Q_{p,q},\QQ)\) is isomorphic to the subring of \(\H^\bullet(S^p\times S^q,\QQ)\) fixed by \(\tau^*\) \cite[Proposition~3G.1]{Hatcher}.  Similar arguments show that, more generally, \(\H^\bullet(Q_{p,q},\QQ\oplus\twist{\QQ})\) is isomorphic to the subring of the \(\ZZ\oplus\{\pm 1\}\)-graded ring \(\H^\bullet(S^p\times S^q, \QQ\oplus \QQ)\) 
fixed by the action of \(\tau^*\) on homogeneous elements in the \(+1\)-graded part and the action of \(-\tau^*\) on the \((-1)\)-graded part.  As \(\tau_n^*\colon \H^n(S^n) \to \H^n(S^n)\) is given by multiplication with \((-1)^{n+1}\), we find:
\begin{equation}
  \H^\bullet(Q_{p,q},\QQ\oplus \twist{\QQ}) \cong \QQ[\alpha,\beta]/(\alpha^2,\beta^2) 
  \quad\text{ with }
  \begin{cases}
    \deg{\alpha} = \twistedbideg{p}{p+1}\\
    \deg{\beta}  = \twistedbideg{q}{q+1}
  \end{cases}
\end{equation}
In particular, \(Q_{p,q}\) is orientable if and only if \(p+q\) is even (use \cite[Theorem~3.26]{Hatcher}).  

\subsection{A (minimal) cell structure}
In order to compute the twisted (co)homology of the real quadric \(Q_{p,q}\) with integral coefficients, we need to study its two-fold cover \(S^p\times S^q\) in more detail. 

\begin{proposition}\label{prop:SxS-cell-structure}
  Assume \(p\leq q\).  There exists a cell structure on \(S^p\times S^q\) consisting of \(4p+4\) cells for which the action of \(\tau:=\tau_p\times \tau_q\) is cellular: we have characteristic maps \(j_{0,0}^\pm, \dots, j_{p,0}^\pm\) for cells in dimensions \(0,\dots,p\) and characteristic maps \(j_{0,q}^\pm,\dots,j_{p,q}^\pm\) for cells in dimensions \(q,\dots,\mbox{p+q}\) such that \(\tau\circ j_{k,i}^+ = j_{k,i}^-\).  
  The skeleta of \(X=S^p\times S^q\) with respect to this cell structure are given by:
  \begin{align*}
    X^k &= \Delta^S(S^k)  && \text{ for } 0 \leq k < p\\
    X^k &= \Delta^S(S^p)  && \text{ for } p \leq k < q\\
    X^{k+q}&= \Delta^S(S^p) \cup (S^k\times S^q) &&\text{ for } q \leq k+q \leq p+q
  \end{align*}
  Here,  \(\Delta^S\colon S^p\to S^p\times S^q\) denotes the map \(x\mapsto (x,(x,0))\), and \(S^k\) is viewed as an ``equator'' of \(S^p\) for \(k\leq p\), embedded via the first \(k+1\) coordinates.
\end{proposition}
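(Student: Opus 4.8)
The plan is to build the cell structure by hand, starting from the standard antipodally-equivariant CW structure on each sphere and then assembling the product. Recall that \(S^n\) carries a minimal CW structure with two cells \(e_j^\pm\) in each dimension \(0\le j\le n\), whose \(j\)-skeleton is the equatorial subsphere \(S^j=\{x_{j+1}=\cdots=x_n=0\}\), whose open \(j\)-cells are the two hemispheres \(\{x\in S^j:\pm x_j>0\}\), and for which the antipodal map \(\tau_n\) is cellular with \(\tau_n(e_j^+)=e_j^-\). Fix standard characteristic maps \(\phi_j^\pm\colon D^j\to S^j\subset S^n\). The low-dimensional (``diagonal'') cells of \(X=S^p\times S^q\) are then defined by \(j_{k,0}^\pm:=\Delta^S\circ\phi_k^\pm\) for \(0\le k\le p\); since \(\Delta^S\) is a homeomorphism onto its image, these are genuine \(k\)-cells and they equip \(\Delta^S(S^p)\) with the stated skeleta \(\Delta^S(S^k)\).

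First I would pin down the open cells set-theoretically and check that they partition \(X\). Every point of \(S^p\) lies in a unique open cell, so writing \((x,z)\in X\), the diagonal points \(z=(x,0)\) are covered by the diagonal cells above. For \(z\neq(x,0)\) I would declare \((x,z)\) to belong to the cell indexed \((k,q,\pm)\), where the open \(k\)-cell \(e_k^\pm\subset S^p\) containing \(x\) determines \(k\) and the sign. Thus the open cell \((k,q,\pm)\) is \(\{(x,z):x\in e_k^\pm,\ z\neq(x,0)\}\); it fibres over \(e_k^\pm\) with fibre \(S^q\setminus\{\mathrm{pt}\}\cong\RR^q\) and is therefore an open \((k+q)\)-cell. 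This exhibits \(4p+4\) open cells partitioning \(X\), and a direct computation confirms the skeleton \(X^{k+q}=\Delta^S(S^p)\cup(S^k\times S^q)\): the only points of \(S^k\times S^q\) missing from the fibrewise-punctured cells are the diagonal points \(\Delta^S(S^k\setminus S^{k-1})\), which are already supplied by \(\Delta^S(S^p)\).

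The substantive step is to realise each high cell by a continuous characteristic map \(j_{k,q}^+\colon D^k\times D^q\to X\) (using \(D^k\times D^q\cong D^{k+q}\)) whose boundary lands in the \((k+q-1)\)-skeleton. I would set \(j_{k,q}^+(u,v)=(\phi_k^+(u),\,R_{\phi_k^+(u)}(\psi(v)))\), where \(\psi\colon D^q\to S^q\) is the standard characteristic map collapsing \(\partial D^q\) to the pole \((1,0,\dots,0)\), and \(R_x\in\SO(q+1)\) is a rotation carrying that pole to \((x,0)\in S^q\). The key point is a choice of such \(R_x\) depending continuously on \(x\) over the closed \(k\)-hemisphere \(\phi_k^+(D^k)\): as this hemisphere is contractible, the map \(x\mapsto(x,0)\) lifts continuously through the fibration \(\SO(q+1)\to S^q\), \(R\mapsto R(1,0,\dots,0)\), producing the family \(R_x\). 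I then set \(j_{k,q}^-:=\tau\circ j_{k,q}^+\), so that \(\tau\circ j_{k,i}^+=j_{k,i}^-\) holds by construction (the diagonal case follows from \(\tau_p\circ\phi_k^+=\phi_k^-\) and \(\tau\circ\Delta^S=\Delta^S\circ\tau_p\)).

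Finally I would verify the CW axioms. The boundary \(\partial(D^k\times D^q)\) splits in two: on \(\partial D^k\times D^q\) the first coordinate \(\phi_k^+(\partial D^k)\) lies in \(S^{k-1}\), so the image lies in \(S^{k-1}\times S^q\subset X^{k+q-1}\); on \(D^k\times\partial D^q\) the second coordinate collapses, since \(\psi(\partial D^q)\) is the pole and \(R_{\phi_k^+(u)}(\text{pole})=(\phi_k^+(u),0)\), giving \(\Delta^S(\phi_k^+(u))\in\Delta^S(S^p)\subset X^{k+q-1}\) (using \(k\ge1\), hence \(k+q-1\ge q\ge p\)). Injectivity on interiors and the homeomorphism-onto-cell property follow from those of \(\phi_k^+\) and \(\psi\) together with invariance of domain, while the weak topology and closure-finiteness are automatic as \(X\) is compact. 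The main obstacle is the construction in the third paragraph — threading the continuously varying punctures of the \(S^q\)-fibres into honest characteristic maps while keeping the attaching maps inside the lower skeleton; once the rotation family \(R_x\) is in hand, the remaining verifications are routine.
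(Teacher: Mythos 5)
Your proposal is correct and follows essentially the same route as the paper: the equivariant two-cells-per-dimension structure on each sphere, diagonal cells via \(\Delta^S\), high cells built from a continuously varying rotation family \(R_x\in\SO(q+1)\) obtained as a section of the bundle \(\SO(q+1)\to S^q\) over a contractible hemisphere (the paper's Lemma~\ref{lem:rotation-families}), and \(j^-_{k,q}:=\tau\circ j^+_{k,q}\). The only differences are cosmetic (which pole the puncture sits at, and the domain over which the section is taken).
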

\begin{proof}
  We will give an explicit description of this cell structure on \(S^p\times S^q\).  First, let us recall and make explicit one common cell structure on a single sphere \(S^n\). For \(k\in\{0,\dots,n\}\), let \(j_k^+\) denote the following embedding of \(D^k\) into \(S^n\):
  \begin{equation*}
    \begin{aligned}
      j_k^+\colon D^k &\xrightarrow{\quad\quad} S^n\\
      (x_0,\dots, x_{k-1}) &\mapsto (x_0,\dots, x_{k-1}, \sqrt{1-\|x\|^2},0, \dots, 0)
    \end{aligned}
  \end{equation*}
  Another embedding of \(D^k\) into \(S^n\) is given by \(j_k^- := \tau_n\circ j_k^+\).
  Together, these maps \(j_k^+\) and \(j_k^-\) for \(k\in\{0,\dots,n\}\) define a cell structure on \(S^n\) with two cells in each dimension.  

  Now take \(n=p\). We give \(\Delta^S(S^p) \subset S^p\times S^q\) the cell structure induced by the cell structure on \(S^p\) just described.  That is, we consider the characteristic maps
  \begin{equation}\label{eq:defn:jk0}
    j_{k,0}^{\pm} := \Delta^S\circ j_k^{\pm}\colon D^k \to S^p\times S^q.
  \end{equation}
  Note that \(\tau \circ j_{k,0}^+ = j_{k,0}^- \) as desired.
  
  The second family of \(2p+2\) cells lives in dimensions \(q\), \dots, \(p+q\).  In order to define these cells, we need to fix some further notation. Consider the ``northern hemisphere'' \(j^+_q(D^q)\subset S^q\), i.e.\ the set of all points \(x=(x_0,\dots,x_q)\in S^q\) with \(x_q\geq 0\). Let \(e_q := (0,\dots, 0,1)\) be the ``north pole''.  
  Choose and fix a continuous map 
  \begin{align*}
    R\colon j^+_q(D^q) &\to \mathrm{SO}(q+1)\\
    x &\mapsto R_x
  \end{align*}
  with the property that \(R_x\cdot e_q = x\).  Such a map exists by Lemma~\ref{lem:rotation-families} below.
  
  Also, we fix for each dimension \(n\) a map that ``wraps the \(n\)-disk around the \(n\)-sphere'', i.e.\ a surjection \(w_n\colon D^n \to S^n\) that sends \(0\in D^n\) to the north pole \(e_n\in S^n\) and every point on the boundary \(\partial D^n\) to the south pole \(-e_n\in S^n\), and that induces a homeomorphism \(\bar w_n\colon D^n/\partial D^n \cong S^n\).  We choose \(w_n\) as follows:
  \begin{equation}\label{eq:defn:wn}
    \begin{aligned}
      w_n\colon  D^n &\to S^n\\
      x & \mapsto (s(x)\cdot x, 1 - 2\|x\|),
    \end{aligned}
  \end{equation}
  where \(s(x)\) is a non-negative scalar determined by the requirement that \(w_n(x)\in S^n\).
  Finally, fix homeomorphisms \(\phi_{m,n}\) between products of disks \(D^m\times D^n\) and the disks \(D^{m+n}\) of the appropriate dimensions:
  \begin{equation}\label{eq:defn:phimn}
    \begin{aligned}
      \phi_{m,n}\colon D^m\times D^n &\xrightarrow{\cong} D^{m+n}\\
      (x,y) &\mapsto \tfrac{\max(\|x\|,\|y\|)}{\|(x,y)\|} (x,y)
    \end{aligned}
  \end{equation}
  In view of the homeomorphisms \(\phi_{k,q}\), we may define the characteristic maps for the \((k+q)\)-cells of \(S^p\times S^q\) on the products \(D^k\times D^q\).
  For every \(k\in\{0,\dots,p\}\), we define one such characteristic map as follows:
  \begin{equation}\label{eq:defn:jkq}
    \begin{aligned}
      j_{k,q}^+\colon D^k \times D^q &\to S^p \times S^q \\
      (x,y)&\mapsto (j_k^+(x), -R_{(j_k^+(x),0)}(w_qy))
    \end{aligned}
  \end{equation}
  Here, the entry \(j_k^+(x)\) in the first coordinate denotes the embedding \(j_k^+\colon D^k\to S^p\), and the entry \((j_k^+(x),0)\) in the second coordinate denotes the composition of \(j_k^+\) with the embedding \(x\mapsto (x,0)\) of \(S^p\) into \(S^q\) as an ``equator''.
  We define a second characteristic map for a \((k+q)\)-cell as \(j_{k,q}^-:= \tau\circ j_{k,q}^+\).

  When restricted to the inner points of \(D^k\times D^q\), the characteristic maps \(j_{k,q}^\pm\) induce homeomorphisms:
  \[
    \inner{D}^k\times\inner{D}^q \xrightarrow{\;\cong\;} (j_k^\pm(\inner{D}^k)\times S^q)\setminus \Delta^S(S^p)
  \]
  The images of these homeomorphisms clearly constitute a cover of \((S^p\times S^q)\setminus\Delta^S(S^p)\) by disjoint sets.  Thus, altogether the maps \(j_{k,i}^\pm\) for \(i \in \{0,q\}\) and \(k\in\{0,\dots,p\}\) defined in \eqref{eq:defn:jk0} and \eqref{eq:defn:jkq} constitute a cell structure on \(S^p\times S^q\).
\end{proof}

\begin{lemma}\label{lem:rotation-families}
  There exists a continuous map 
  \(
  R\colon j^+_q(D^q) \to \mathrm{SO}(q+1),
  x\mapsto R_x
  \)
  such that \(R_x\cdot e_q = x\) for all~\(x\).
\end{lemma}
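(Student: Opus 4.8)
The plan is to write down $R_x$ explicitly as the rotation that turns the north pole $e_q$ onto $x$ inside the $2$-plane they span, leaving the orthogonal complement fixed, and then to check that this family depends continuously on $x$ across the apex $x=e_q$, where that plane degenerates. If one only wants existence, there is a slicker route: the evaluation map $\SO(q+1)\to S^q$, $A\mapsto A\cdot e_q$, is the standard fibre bundle $\SO(q)\to\SO(q+1)\to S^q$, and the closed hemisphere $j^+_q(D^q)$ is homeomorphic to $D^q$, hence contractible; a fibre bundle over a contractible paracompact base is trivial and thus admits a global section, which is precisely a map $R$ with $R_x\cdot e_q = x$. Since the surrounding construction feeds $R$ into explicit characteristic maps, I would prefer the concrete formula below.

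First I would fix the data attached to a point $x=(x_0,\dots,x_q)$ of the closed hemisphere, so that $x_q=\langle x,e_q\rangle\geq 0$. Writing $v := x - x_q\,e_q$ for the component of $x$ orthogonal to $e_q$, and letting $\theta\in[0,\tfrac{\pi}{2}]$ be the angle with $\cos\theta = x_q$ and $\|v\|=\sin\theta$, I would define $R_x$ to act as the identity on $(\operatorname{span}\{e_q,v\})^\perp$ and as the planar rotation $e_q\mapsto x$ on $\operatorname{span}\{e_q,v\}$, setting $R_{e_q}:=\id$ in the degenerate case $v=0$. The next step is to record the closed form
\[
  R_x(w) = w + (x_q-1)\langle w,e_q\rangle e_q + \langle w,e_q\rangle v - \langle w,v\rangle e_q - \tfrac{1}{1+x_q}\langle w,v\rangle v,
\]
and to verify directly that $R_x(e_q)=x$, that $R_x$ restricts to the claimed planar rotation on $\operatorname{span}\{e_q,v\}$ and to the identity on its complement, and hence that $R_x$ is an isometry of determinant $+1$. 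Thus $R_x\in\SO(q+1)$: the determinant is continuous, takes values in $\{\pm1\}$, and equals $+1$ at $x=e_q$, so it is identically $+1$ on the connected hemisphere.

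The one genuine point — and the step I expect to be the main obstacle — is continuity at the apex $x=e_q$. The geometrically natural description of $R_x$ uses the unit vector $u := v/\|v\|$, which has no limit as $x\to e_q$. The crux is to observe that $u$ enters only through the combinations $\sin\theta\,u = v$ and $(\cos\theta-1)\langle w,u\rangle u = -\tfrac{1}{1+\cos\theta}\langle w,v\rangle v$, the latter using $\sin^2\theta = (1-\cos\theta)(1+\cos\theta)$; this is exactly the rewriting that produces the displayed formula and eliminates $u$. Every coefficient there — namely $x_q$, the vector $v = x-x_q\,e_q$, and the scalar $\tfrac{1}{1+x_q}$ — is continuous on the region $x_q > -1$, where $1+x_q$ never vanishes, so the map $(x,w)\mapsto R_x(w)$ is continuous on the whole hemisphere (indeed on $S^q\setminus\{-e_q\}$), and the value $v=0$ at $x=e_q$ automatically recovers $R_{e_q}=\id$. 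Hence $x\mapsto R_x$ is continuous into $\SO(q+1)$, which completes the proof.
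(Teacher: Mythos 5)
Your argument is correct, but it is not the one the paper uses. The paper's proof is exactly the two-line ``slicker route'' you mention in passing and then set aside: the evaluation map $\SO(q+1)\to S^q$, $A\mapsto A\cdot e_q$, is a principal $\SO(q)$-bundle, and over the contractible hemisphere $j^+_q(D^q)\cong D^q$ it is trivial, hence admits a section. Your preferred route instead exhibits an explicit section: the rotation in $\operatorname{span}\{e_q,v\}$ with $v=x-x_q e_q$, extended by the identity on the orthogonal complement. Your closed formula checks out ($R_x(e_q)=x_qe_q+v=x$ since $v\perp e_q$; on $u=v/\|v\|$ one gets $R_x(u)=x_q u-\|v\|e_q$, the correct planar rotation; the complement is fixed), and your handling of the apparent singularity at the apex — eliminating $u=v/\|v\|$ via $\sin^2\theta=(1-\cos\theta)(1+\cos\theta)$ so that only $v$ and $\tfrac{1}{1+x_q}$ appear — is the right way to see continuity, valid on all of $S^q\setminus\{-e_q\}$. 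What each approach buys: the paper's argument is shorter and suffices for everything that follows, since the later computation (the homotopy $S(t)$ in the proof of Proposition~\ref{cellular-complex}) only ever uses the defining property $R_x\cdot e_q=x$ together with $\det R_x=1$, never a formula for $R_x$; your explicit section costs a page of verification but makes $R$ canonical and computable, which could be useful if one wanted to trace the degree computations completely by hand. Both are complete proofs of the lemma as stated.
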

\begin{proof}
  Consider the evaluation map \(\mathrm{SO}(q+1)\to S^q\) that takes a matrix \(R\) to \(R\cdot e_q\).  This evaluation map constitutes a principal \(\mathrm{SO}(q)\)-bundle.  Over the contractible space \(j^+_q(D^q)\cong D^q\), this bundle is necessarily trivial, hence it admits a section.
\end{proof}

\subsection{Cellular (co)homology}
Let \(\ZZ[\tau]\) denote the group ring associated with the group with two elements, i.e.\ \(\ZZ[\tau] := \ZZ \oplus \ZZ\tau\) with \(\tau^2 = 1\).  Let \(\widetilde C^{(n)}(\modtwo{0})\) denote the chain complex of free \(\ZZ[\tau]\)-modules 
\[
  \widetilde C^{(n)}(\modtwo 0)\colon 
  \ZZ[\tau]\xleftarrow{1-\tau} 
  \ZZ[\tau]\xleftarrow{1+\tau}
  \ZZ[\tau]\xleftarrow{1-\tau}
  \ZZ[\tau]\xleftarrow{1+\tau}
  \cdots \leftarrow
  \ZZ[\tau]
\]
concentrated in degrees \(0\) to \(n\).  This is the cellular chain complex of \(S^n\) with respect to the cell structure arising from the two-fold covering map \(S^n\to \RP^n\) and the standard cell structure on \(\RP^n\).  Multiplication by \(\tau\) on the complex is the chain map induced by the involution \(\tau_n\) of \(S^n\). The usual homology of \(\RP^n\) with coefficients in \(\ZZ\) is the homology of \(\widetilde C^{(n)}(\modtwo 0)\otimes_{\ZZ[\tau]}\ZZ\), where \(\ZZ\) is viewed as trivial \(\ZZ[\tau]\)-module; if instead we tensor with the \(\ZZ[\tau]\)-module \(\twist{\ZZ}\) on which \(\tau\) acts as \(-1\), we obtain the homology of \(\RP^n\) with twisted coefficients.

Let \(\widetilde C^{(n)}(\modtwo 1)\) denote the complex with the same groups in each degree, but with the roles of multiplication by \(1-\tau\) and multiplication by \(1+\tau\) reversed:
\[
  \widetilde C^{(n)}(\modtwo 1)\colon 
  \ZZ[\tau]\xleftarrow{1+\tau} 
  \ZZ[\tau]\xleftarrow{1-\tau}
  \ZZ[\tau]\xleftarrow{1+\tau}
  \ZZ[\tau]\xleftarrow{1-\tau}
  \cdots \leftarrow
  \ZZ[\tau]
\]
\begin{proposition}\label{cellular-complex}
  For \(p\leq q\), the cellular chain complex associated with the cell structure on \(S^p\times S^q\) described in Proposition~\ref{prop:SxS-cell-structure} has the form
  \(
  C(S^p\times S^q) \cong \widetilde C^{(p)}(\modtwo 0) \oplus \widetilde C^{(p)}(\modtwo{q+1})[q]
  \),
  where \(\modtwo{q+1}\) indicates the value of \(q+1\) modulo two, and \([q]\) indicates that the second summand is shifted \(q\) degrees to the right.  Multiplication by \(\tau\) on the complex is the chain map induced by the involution \(\tau\) on \(S^p\times S^q\).
\end{proposition}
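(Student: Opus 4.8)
The plan is to compare the cellular chain complex generator-by-generator against the two proposed summands, using the explicit characteristic maps of Proposition~\ref{prop:SxS-cell-structure}. The free $\ZZ[\tau]$-module structure is immediate: the cells come in pairs $j_{k,i}^{\pm}$ with $\tau\circ j_{k,i}^{+}=j_{k,i}^{-}$, so the $j_{k,i}^{+}$ are $\ZZ[\tau]$-generators and $\tau$ swaps the two cells in each pair. The first family $j_{k,0}^{\pm}=\Delta^S\circ j_k^{\pm}$ spans the subcomplex supported on $\Delta^S(S^p)$. Since $\Delta^S$ intertwines $\tau_p$ with $\tau=\tau_p\times\tau_q$ and the $j_k^{\pm}$ are exactly the standard double-cover cell structure on $S^p$, this subcomplex is canonically $\widetilde C^{(p)}(\modtwo 0)$. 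It then remains to analyse the second family $j_{k,q}^{\pm}$, sitting in degrees $q,\dots,p+q$.

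First I would establish the direct-sum splitting of $\ZZ[\tau]$-complexes, i.e.\ that no first-family generator occurs in the boundary of a second-family generator. Splitting $\partial(D^k\times D^q)=(\partial D^k\times D^q)\cup(D^k\times\partial D^q)$, the definition \eqref{eq:defn:jkq} shows that on $D^k\times\partial D^q$ the map $j_{k,q}^{+}$ is independent of $y$ and equals $\Delta^S(j_k^{+}(x))$, because $w_q$ collapses $\partial D^q$ to the south pole and $R_x\cdot e_q=x$. Thus this piece of the attaching map lands in $\Delta^S(S^p)$ and factors through the collapse $D^k\times\partial D^q\to D^k$ of the $(q-1)$-sphere; as it factors through the $k$-disk with $k<k+q-1$, it contributes nothing to the coefficient of any $(k+q-1)$-cell, and for the lowest cell $j_{0,q}^{+}$ the whole attaching map is even constant, so $\partial j_{0,q}^{+}=0$. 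Since a first-family generator can occur in $\partial j_{k,q}^{\pm}$ only when $k+q-1\leq p$, which (as $q\geq p$) forces $k=0$, all first-family coefficients vanish, and $C(S^p\times S^q)$ splits as the direct sum of $\widetilde C^{(p)}(\modtwo 0)$ and the subquotient generated by the $j_{k,q}^{\pm}$.

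Finally I would compute the internal boundary of the second family from the remaining piece $\partial D^k\times D^q$, which (for $x\in\partial D^k$) lands in the open second-family cells $j_{k-1,q}^{\pm}$. On $\partial D^k$ the map $j_k^{+}$ is the inclusion of the equatorial $S^{k-1}\subset S^p$ covered by $j_{k-1}^{\pm}$, exactly as in the computation $\partial j_k^{+}=(1+(-1)^k\tau)j_{k-1}^{+}$ for $\widetilde C^{(p)}(\modtwo 0)$. Restricting $j_{k,q}^{+}$ to the hemisphere covered by $j_{k-1}^{-}$ and comparing with $j_{k-1,q}^{-}=\tau\circ j_{k-1,q}^{+}$, one finds that the two differ by replacing $R_{(j_{k-1}^{-}(x),0)}$ with $-R_{(j_{k-1}^{+}(x),0)}$ in the $S^q$-coordinate; since $-\id$ lies in $\SO(q+1)$ precisely when $q+1$ is even, this introduces an extra orientation sign $(-1)^{q+1}$. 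Hence $\partial j_{k,q}^{+}=\bigl(1+(-1)^{k+q+1}\tau\bigr)\,j_{k-1,q}^{+}$, which is exactly the $\modtwo{q+1}$ boundary pattern of $\widetilde C^{(p)}(\modtwo{q+1})$ placed in degrees $q,\dots,p+q$, i.e.\ $\widetilde C^{(p)}(\modtwo{q+1})[q]$, as claimed.

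The main obstacle is this last orientation bookkeeping: extracting the sign $(-1)^{q+1}$ correctly from the interplay of the antipodal map, the wrapping maps $w_q$, and the rotation family $R$, and keeping track of the induced orientations so that the alternating pattern begins with the correct parity. A secondary point requiring care is the degenerate range where the two families meet in a common dimension (namely $q\in\{p,p+1\}$); there the potential cross terms are ruled out precisely by the vanishing $\partial j_{0,q}^{+}=0$ established in the splitting step.
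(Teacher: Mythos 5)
Your overall strategy coincides with the paper's: split off the subcomplex supported on \(\Delta^S(S^p)\) and identify it with \(\widetilde C^{(p)}(\modtwo 0)\), show there are no cross differentials, and determine the internal boundary of the second family by comparing the attaching map of \(j^+_{k,q}\) over the hemisphere \(j^-_{k-1}(D^{k-1})\) with the characteristic map \(j^-_{k-1,q}=\tau\circ j^+_{k-1,q}\); your final formula \(\partial j^+_{k,q}=(1+(-1)^{k+q+1}\tau)j^+_{k-1,q}\) agrees with the paper's relative sign \(d_+/d_-=(-1)^{k+q+1}\). However, there is a genuine gap at exactly the point you yourself flag as ``the main obstacle'': the assertion that replacing \(R_{(j^-_{k-1}(x),0)}\) by \(-R_{(j^+_{k-1}(x),0)}\) ``introduces an extra orientation sign \((-1)^{q+1}\)'' is stated, not proved. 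The discrepancy between the two parametrizations is the \emph{family} of orthogonal matrices \(x\mapsto -R^{-1}_{-x}R_x\) conjugating the wrapping map \(w_q\), and to extract a sign from it one must show (i) that each such matrix fixes \(e_q\) and hence lies in a copy of \(\mathrm{O}(q)\) acting on the disk factor, (ii) that the family is nullhomotopic because its parameter space (a hemisphere) is contractible, and (iii) that it may therefore be replaced, up to homotopy of the attaching map, by a single diagonal matrix of determinant \(\det(-\id_{\RR^{q+1}})=(-1)^{q+1}\) whose effect on the degree is multiplication by \((-1)^{q+1}\). This is precisely the homotopy \(S_x(t)\) constructed in the paper's proof; the remark that \(-\id\in\SO(q+1)\) iff \(q+1\) is even does not by itself yield the degree statement, since the relevant map is not a single application of the antipode to \(S^q\).

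A smaller but real error occurs in the splitting step: you claim a first-family cell can occur in \(\partial j^{\pm}_{k,q}\) only for \(k=0\), but when \(q=p\) the inequality \(k+q-1\leq p\) also admits \(k=1\), and your fallback (\(\partial j^+_{0,q}=0\)) does not cover that case. The cross term does still vanish --- most cleanly because the projection \(\pi^S_1\) annihilates the second family while carrying \(C(\Delta^S(S^p))\) isomorphically onto \(C(S^p)\), which is the paper's uniform argument --- but as written your case analysis is incomplete for \(p=q\).
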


\begin{proof}[Proof sketch]
  We clearly have a decomposition 
  \[
    C(S^p\times S^q) = C(\Delta^S(S^p)) \oplus C'
  \]
  for some complex \(C'\) concentrated in degrees \(q,\dots,p+q\), except that, a priori, there might be some differentials from \(C'\) to \(C(\Delta(S^p))\). However, there are no such differentials.  This can easily be seen by considering the map on chain complexes induced by the projection \(\pi_1^S\colon S^p\times S^q\to S^p\):  this map is zero on \(C'\) and maps \(C(\Delta^S(S^p))\) isomorphically to \(C(S^p)\). Thus, the above decomposition is an honest decomposition of chain complexes.

  We may identify \(C(\Delta^S(S^p))\) with \(C(S^p)\) with respect to the usual cell structure on \(S^p\) consisting of two cells in each dimension.  This is precisely the cell structure used in the usual computation of the homology of \(\RP^p\) (cf.\  \cite[5.2.1]{davis-kirk}).  The complex \(C'\) can be computed in an analogous fashion.  The non-zero boundary maps \(d\) of \(C'\) are \(\ZZ[\tau]\)-linear maps between free \(\ZZ[\tau]\)-modules of rank one, each with a generator \(\{j^+_{k,q}\}\) represented by one of the characteristic maps \(j_{k,q}^+\). 
  Write \(d\colon C'_{k+q} \to C'_{k-1+q}\) as 
  \[
    d\{j^+_{k,q}\} = d_+\{j^+_{k-1,q}\} + d_-\tau\{j^+_{k-1,q}\}.
  \]
  Both coefficients \(d_+\) and \(d_-\) can easily be seen to be \(\pm 1\).  The crucial value we need to know for the homology/cohomology calculations is the \emph{relative} sign \(d_+/d_-\in\{\pm 1\}\).

  Recall how the coefficients of the boundary map \(d\) in the cellular chain complex of a CW complex \(X\) are defined.  Let \(X^i\) denote the \(i\)-skeleton. Given a characteristic map \(j\colon D^i\to X^i\), write \(\pi(j)\colon X^i\twoheadrightarrow S^i\) for the map that sends the complement of the open cell defined by \(j\) to the ``south pole'' \(-e_i\) and restricts to the ``wrapping map'' defined in \eqref{eq:defn:wn} on the open cell itself: \(\pi(j)\circ j=w_i\). Given an \((i-1)\)-cell with characteristic map \(j'\), the coefficient of \(\{j'\}\) in \(d\{j\}\) is the degree of the composition \(\pi(j')\circ j_{|\partial D^i}\).

  So consider \(X=S^p\times S^q\) with our chosen cell structure.  The coefficients \(d_\pm\) are the degrees of the following two compositions:
  \begin{align}
    \label{eq:degree-map+}
    f^{++}\colon \partial(D^k\times D^q) &\xrightarrow{{j^+_{k,q}}_{|\partial(D^k\times D^q)}}  X^{k-1+q} \xrightarrow{\pi(j^+_{k-1,q})} S^{k-1+q} \\
    \label{eq:degree-map-}
    f^{+-}\colon \partial(D^k\times D^q) &\xrightarrow{{j^+_{k,q}}_{|\partial(D^k\times D^q)}}  X^{k-1+q}  \xrightarrow{\pi(j^-_{k-1,q})}  S^{k-1+q}
  \end{align}
  In order to compare the degrees of these maps, we first describe them more explicitly.  Recall
  from \eqref{eq:defn:phimn} our notation \(\phi_{m,n}\) for the homeomorphism \(D^m\times D^n\cong D^{m+n}\), and let \(\bar\phi_{m,n}\) denote the induced homeomorphism
  \[
    \bar\phi_{m,n}\colon \frac{D^m\times D^n}{\partial(D^m\times D^n)} \xrightarrow{\quad\cong\quad} \frac{D^{m+n}}{\partial D^{m+n}}.
  \]
  Also recall that \(\bar w_n\) denotes the homeomorphism \(D^n/\partial D^n\to S^n\) induced by the wrapping map \(w_n\).  Let 
  \(
  w_{m,n}\colon D^m\times D^n \to S^{n+m}
  \)
  denote the composition \(w_{m,n} := w_{m+n}\circ \phi_{m,n}\), and let \(\bar w_{m,n}\) denote the following composition in which the first map is the canonical surjection from the product to the smash product:
  \[
    \bar w_{m,n} \colon
    \frac{D^m}{\partial D^m} \times \frac{D^n}{\partial D^n}
    \xrightarrow{\quad\quad}
    \frac{D^m}{\partial D^m}\wedge \frac{D^n}{\partial D^n}
    =
    \frac{D^m\times D^n}{\partial(D^m\times D^n)}  \xrightarrow[\bar \phi_{m,n}]{\quad\cong\quad}
    \frac{D^{m+n}}{\partial D^{m+n}}
    \xrightarrow[\bar w_{m+n}]{\quad\cong\quad}
    S^{m+n}
  \]
  The maps \(\pi(j^\pm_{k,q})\colon X^{k+q}\to S^{k+q}\) can be described as follows:
  \begin{align*}
    \pi(j^+_{k,q})(x,y)
    & =
      \begin{cases}
        \bar w_{k,q}\left(x_0,\dots,x_{k-1},\bar w_q^{-1}\left(-R_{(x,0)}^{-1}(y)\right)\right)
        \phantom{-()_-}
        & \text{ if  \(x_k\geq 0\) and \((x,0)\neq y\)}\\
        -e_{k+q}
        & \text{ otherwise}
      \end{cases}
    \\
    \pi(j^-_{k,q})(x,y)
    & =
      \begin{cases}
        \bar w_{k,q}\left(-(x_0,\dots,x_{k-1}),\bar w_q^{-1}\left(-R_{-(x,0)}^{-1}(y)\right)\right)
        & \text{ if  \(x_k\leq 0\) and \((x,0)\neq y\)}\\
        -e_{k+q}
        & \text{ otherwise}
      \end{cases}
  \end{align*}
  In either line, the coordinates are \(x = (x_0,\dots,x_k,0,\dots,0)\in S^p\) and \(y=(y_0,\dots,y_q)\in S^q\).  Note that only the first \(k\) coordinates of \(x\), i.e.\ the coordinates \((x_0,\dots,x_{k-1})\in D^k\), appear as arguments of \(\bar w_{k,q}\).  For the maps \(f^{+\pm}\colon \partial(D^k\times D^q) \to S^{k+q-1}\) in \eqref{eq:degree-map+} and \eqref{eq:degree-map-}, we obtain the following description:
  \begin{align}
    \label{eq:formula:f++}
    f^{++}(x,y)
    &=
      \begin{cases}
        \bar w_{k-1,q}\left(x_0,\dots,x_{k-2},y\right)
        \phantom{-()\bar w_q^{-1}\left(-R^{-1}_{-(x,0)}R_{(x,0)}\bar w_q()\right)}
        & \text{ if \(x_{k-1}\geq 0\) and \(\|x\|=1\)}\\
        -e_{k-1+q}
        & \text{ otherwise}
      \end{cases}
    \\
    \label{eq:formula:f+-}
    f^{+-}(x,y)
    &=
      \begin{cases}
        \bar w_{k-1,q}\left(-(x_0,\dots,x_{k-2}),\bar w_q^{-1}\left(-R^{-1}_{-(x,0)}R_{(x,0)} w_q(y)\right)\right)
        & \text{ if \(x_{k-1}\leq 0\) and \(\|x\|=1\)}\\
        -e_{k-1+q}
        & \text{ otherwise}
      \end{cases}
  \end{align}
  Here, the coordinates in either line are \(x = (x_0,\dots,x_{k-1})\in D^k\) and \(y = (y_0,\dots,y_{q-1})\in D^q\).  
  
  The map \(f^{+-}\) we have just computed is homotopic to the following simpler map:
  \begin{align}\label{eq:defn:f+-tilde}
    \widetilde f^{+-}(x,y)
    &=
      \begin{cases}
        \bar w_{k-1,q}\left(-(x_0,\dots,x_{k-2}),(-1)^{q+1} y_0,y_1,\dots,y_{q-1}\right)
        & \text{ if \(x_{k-1}\leq 0\) and \(\|x\|=1\)}\\
        -e_{k-1+q}
        & \text{ otherwise}
      \end{cases}
  \end{align}
  To construct a homotopy \(f^{+-} \leadsto \widetilde f^{+-}\), first consider the following map:
  \begin{align*}
    S\colon j^-_{q-1}(D^{q-1}) \to \mathrm{O}(q+1)\\
    x \mapsto S_x := -R^{-1}_{-x}R_x
  \end{align*}
  Here, \(x\) has coordinates \((x_0,\dots,x_{q-1},0)\in D^q\) with \(x_{q-1} \leq 0\).  Note that both \(x\) and \(-x\) lie in \(j^+_q(D^q)\), so that both \(R_x\) and \(R_{-x}\) are defined.  It follows from our description of \(R\) in Lemma~\ref{lem:rotation-families} that \(S_x e_q = e_q\) for each \(x\).  So the map \(S\) factors through the inclusion of \(\mathrm{O}(q)\) into \({O}(q+1)\) as ``upper left block''.  Let \(x\mapsto S'_x\in \mathrm{O}(q)\) denote the restricted map: 
  \[\xymatrix{
      j^-_{q-1}(D^{q-1})\ar[r]_-{S'} \ar@/^1em/[rr]^{S}&\mathrm{O}(q) \ar@{^{(}->}[r] & \mathrm{O}(q+1)
    }\]
  As \(j^-_{q-1}(D^{q-1})\) is contractible, the map \(S'\) is nullhomotopic.  As \(R_x\) always has determinant one, the determinant of \(S'_x\) can be computed as \(\det(S'_x) = \det(S_x) = \det(-\id_{\RR^{q+1}}) = (-1)^{q+1}\).  We may therefore choose a homotopy from \(S'\) to the constant map \(j^-_{q-1}(D^{q-1})\to \mathrm{O}(q)\) with value the  \((q\times q)\)-diagonal matrix \(T_q^{q+1}\), where
  \[
    T_q^i :=
    \begin{pmatrix}
      (-1)^i & & & \\
      & 1 & & \\
      &  & \ddots & \\
      & & & 1 
    \end{pmatrix} .
  \]
  Consequently, \(S\) is homotopic to the map \(j^-_{q-1}(D^{q-1})\to \mathrm{O}(q+1)\) with constant value \((T_{q+1})^{q+1}\). Denote this homotopy by \(S(t)\):
  \begin{align*}
    [0,1]\times j^-_{q-1}(D^{q-1}) &\to \mathrm{O}(q+1)\\
    (x,t) \quad& \mapsto S_x(t)
  \end{align*}
  Then \(S_x(0) = S_x\), \(S_x(1) = (T_{q+1})^{q+1}\), and \(S_x(t)e_q = e_q\) for all \(x\) and all \(t\).  Now consider the homotopy \(f\colon [0,1]\times \partial(D^k\times D^q) \to  S^{k-1+q}\) defined as follows:
  \begin{align*}
    f(t)(x,y) := 
    \begin{cases}
      \bar w_{k-1,q}(-(x_0,\dots,x_{k-2}),\bar w_q^{-1}(S_{(x,0)}(t)\cdot w_q(y)))
      & \text{ if \(x_{k-1}\leq 0\) and \(\|x\|=1\)}\\
      -e_{k-1+q}
      & \text{ otherwise}
    \end{cases}
  \end{align*}
  This is the homotopy from \(f^{+-}\) to \(\widetilde f^{+-}\) that we need.  Indeed, the equality \(f(0)=f^{+-}\) is obvious, and the equality \(f(1) = \widetilde f^{+-}\) easily follows once we observe that \(T_{q+1}w_q(y) = w_q(T_q y)\) for our choice of wrapping map \(w_q\) (see \eqref{eq:defn:wn}).

  It follows in particular that \(f^{+-}\) and \(\widetilde f^{+-}\) have the same degree.  On the other hand, we see from the formulas given in \eqref{eq:formula:f++} and \eqref{eq:defn:f+-tilde} that \(\widetilde f^{+-} = f^{++}\sigma\) for the involution \(\sigma\colon \partial(D^k\times D^q)\to\partial(D^k\times D^q)\) given by 
  \[
    \sigma(x_0,\dots,x_{k-1},y_0,\dots,y_{q-1}) := (-x_0,-x_1,\dots,-x_{k-1},(-1)^{q+1}y_0,y_1,\dots, y_{q-1}).
  \]
  So altogether we find that
  \newcommand{\degree}{\mathrm{deg}}
  \(
  d_+/d_- = \degree(f^{++})/\degree(\widetilde f^{+-})
  = \degree(\sigma) = (-1)^{k+q+1}
  \).
  This shows that the cellular chain complex of \(S^p\times S^q\) has the form claimed.
\end{proof}

Now let \(\ZZ(0) = \ZZ\) and \(\twist{\ZZ}\) denote the trivial \(\ZZ[\tau]\)-module and the \(\ZZ[\tau]\)-module on which \(\tau\) acts as multiplication by \(-1\), respectively. 
Let \(C^{(n)}(\modtwo t)\) denote the following chain complexes of abelian groups:
\begin{align*}
  C^{(n)}(\modtwo 0)\colon &
                             \ZZ\xleftarrow{0}
                             \ZZ\xleftarrow{2}
                             \ZZ\xleftarrow{0}
                             \ZZ\xleftarrow{2}
                             \cdots \leftarrow
                             \ZZ
  \\
  C^{(n)}(\modtwo 1)\colon &
                             \ZZ\xleftarrow{2}
                             \ZZ\xleftarrow{0}
                             \ZZ\xleftarrow{2}
                             \ZZ\xleftarrow{0}
                             \cdots \leftarrow
                             \ZZ
\end{align*}
These are related to the chain complexes of \(\ZZ[\tau]\)-modules considered above by canonical isomorphisms \(\widetilde C^{(n)}(\modtwo t) \otimes_{\ZZ[\tau]} \twist[s]{\ZZ} \cong C^{(n)} (\modtwo{s+t})\). 
\begin{corollary}
  Assume \(p\leq q\).  There exists a cell structure on \(Q_{p,q}\) with \(2p+2\) cells: one \(i\)-cell for each \(i\in\{0,\dots,p\}\), and one \(i\)-cell for each \(i\in\{q,\dots,p+q\}\). 
  The associated chain complexes with coefficients in \(\twist[s]{\ZZ}\) are given by 
  \[
    C(Q_{p,q}, \twist[s]{\ZZ}) \cong C^{(p)}(\modtwo s) \oplus C^{(p)}(\modtwo{s+q+1})[q].
  \]
\end{corollary}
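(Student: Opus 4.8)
The plan is to descend the $\tau$-equivariant cell structure from $S^p \times S^q$ to its quotient $Q_{p,q}$, and then to identify the resulting cellular chain complex with local coefficients as a base change along $\ZZ \hookrightarrow \ZZ[\tau]$ of the equivariant complex already computed in Proposition~\ref{cellular-complex}.

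First I would observe that the involution $\tau = \tau_p \times \tau_q$ acts freely on $S^p \times S^q$, and that the cell structure of Proposition~\ref{prop:SxS-cell-structure} is $\tau$-equivariant, with $\tau$ interchanging each characteristic map $j^{+}_{k,i}$ with its partner $j^{-}_{k,i}$. Hence each pair $\{j^+_{k,i}, j^-_{k,i}\}$ is a single free $\tau$-orbit of cells, and the quotient map $\pi\colon S^p \times S^q \to Q_{p,q}$ carries it homeomorphically onto a single cell of $Q_{p,q}$. Since a free cellular action by a finite group induces a CW structure on the quotient with cells indexed by orbits, this produces a cell structure on $Q_{p,q}$ with exactly $\tfrac{1}{2}(4p+4) = 2p+2$ cells: one in each dimension $i \in \{0,\dots,p\}$ (coming from the $j^{\pm}_{k,0}$) and one in each dimension $i \in \{q, \dots, p+q\}$ (coming from the $j^{\pm}_{k,q}$), as claimed.

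Next I would invoke the standard description of cellular homology with local coefficients for a regular covering. For the two-fold cover $\pi$ with deck group $\langle \tau \rangle \cong \ZZII$, the cellular chain complex of $S^p \times S^q$ is naturally a complex of free $\ZZ[\tau]$-modules via the deck action, and for any $\ZZ[\tau]$-module $M$, viewed as a local coefficient system on $Q_{p,q}$, there is a canonical identification
\[
  C(Q_{p,q}, M) \cong C(S^p \times S^q) \otimes_{\ZZ[\tau]} M.
\]
Specializing to $M = \twist[s]{\ZZ}$ and substituting the decomposition $C(S^p \times S^q) \cong \widetilde C^{(p)}(\modtwo 0) \oplus \widetilde C^{(p)}(\modtwo{q+1})[q]$ of Proposition~\ref{cellular-complex}, the functor $-\otimes_{\ZZ[\tau]} \twist[s]{\ZZ}$ (which commutes with the degree shift $[q]$) together with the canonical isomorphisms $\widetilde C^{(n)}(\modtwo t) \otimes_{\ZZ[\tau]} \twist[s]{\ZZ} \cong C^{(n)}(\modtwo{s+t})$ yields
\[
  C(Q_{p,q}, \twist[s]{\ZZ}) \cong C^{(p)}(\modtwo s) \oplus C^{(p)}(\modtwo{s+q+1})[q],
\]
which is the asserted formula.

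This corollary is essentially a formal consequence of the two preceding propositions, so I do not expect a substantial obstacle; the only point demanding care is the bookkeeping of conventions. One must check that the $\ZZ[\tau]$-module structure on $C(S^p\times S^q)$ coming from the deck transformation $\tau$ agrees with the one used in Proposition~\ref{cellular-complex}, and that the local system $\twist[s]{\ZZ}$ on $Q_{p,q}$ corresponds, under the dictionary of Section~\ref{sec:top-twists}, to the $\ZZ[\tau]$-module on which $\tau$ acts by $(-1)^s$. Both are immediate, since $\twist{\ZZ}$ is by construction the local system associated to the cover $\pi$, and $\twist[s]{\ZZ} = \ZZ \otimes_{\ZZ} \twist{\ZZ}^{\otimes s}$ then carries the action of $\tau$ by $(-1)^s$.
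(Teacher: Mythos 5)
Your proposal is correct and is exactly the argument the paper intends: the corollary is stated without proof precisely because it follows formally from Proposition~\ref{prop:SxS-cell-structure} (free cellular $\tau$-action descending to a CW structure on the quotient with half the cells), Proposition~\ref{cellular-complex}, and the canonical isomorphisms $\widetilde C^{(n)}(\modtwo t) \otimes_{\ZZ[\tau]} \twist[s]{\ZZ} \cong C^{(n)}(\modtwo{s+t})$ stated just before the corollary. Your bookkeeping of the $\ZZ[\tau]$-module structures and the identification of $\twist[s]{\ZZ}$ with the module on which $\tau$ acts by $(-1)^s$ matches the conventions of Section~\ref{sec:top-twists}.
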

This description of \(C(Q_{p,q},\twist[s]{\ZZ})\) immediately gives us additive descriptions of the homology and cohomology of \(Q_{p,q}\) with arbitrary -- untwisted and twisted -- coefficients.
For example, \(\H_\bullet(Q_{p,q},\ZZII)\) is just a direct sum of two (shifted) copies of \(\H_\bullet(\RP^p,\ZZII)\).  Table~\ref{table:split-examples} displays the cohomology groups of some split real quadrics with untwisted and twisted integral coefficients. 

\begin{corollary}\label{i-and-p-on-cohomology}
  For coefficients \(R\in \{\ZZ,\twist{\ZZ}, \ZZII\}\), the embedding \(\Delta\colon \RP^p \hookrightarrow Q_{p,q}\) and the projection \(\pi_1\colon  Q_{p,q}\twoheadrightarrow\RP^p\) induce mutually inverse isomorphisms \(\H^i(Q_{p,q}, R) \cong \H^i(\RP^p, R)\) in all degrees \(i < p\).  In degree~\(p\), \(\Delta^*\) is an epimorphism split by the monomorphism \(\pi_1^*\). 
\end{corollary}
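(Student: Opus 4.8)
The plan is to derive everything from two inputs: the relation \(\pi_1\circ\Delta=\id_{\RP^p}\) recorded in Section~\ref{sec:real-quadrics} (``\(\Delta\) splits the projection \(\pi_1\)''), and the explicit splitting of the cellular chain complex established in the preceding corollary.

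First I would exploit the splitting relation formally. Since \(\pi_1\circ\Delta=\id_{\RP^p}\), functoriality of cohomology gives \(\Delta^*\circ\pi_1^*=\id\) on \(\H^i(\RP^p,R)\) for every degree \(i\) and every coefficient system \(R\). Hence \(\pi_1^*\) is a split monomorphism and \(\Delta^*\) a split epimorphism in all degrees, with \(\pi_1^*\) providing the splitting of \(\Delta^*\). This already establishes the degree-\(p\) claim verbatim, with no restriction on \(q\); it remains only to upgrade these maps to mutually inverse isomorphisms in degrees \(i<p\).

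For the isomorphism statement I would invoke the decomposition
\[
  C(Q_{p,q},\twist[s]{\ZZ}) \cong C^{(p)}(\modtwo s) \oplus C^{(p)}(\modtwo{s+q+1})[q]
\]
of the preceding corollary, together with its evident mod-two analogue (obtained by tensoring the underlying free \(\ZZ[\tau]\)-complex of Proposition~\ref{cellular-complex} with \(\ZZII\)); these cover all three systems \(R\in\{\ZZ,\twist{\ZZ},\ZZII\}\). The first summand is precisely the cellular chain complex of \(\RP^p\) with coefficients in \(R\), while the second summand is concentrated in degrees \(q,\dots,p+q\). Since \(q\geq p\), the second summand contributes nothing to (co)homology in degrees below \(p\), so the inclusion and projection of the first summand induce isomorphisms on cohomology in those degrees.

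The one point requiring care—and the main (mild) obstacle—is to verify that \(\Delta^*\) and \(\pi_1^*\) genuinely realize this projection and inclusion with respect to the decomposition. This is essentially contained in the proof of Proposition~\ref{cellular-complex}: the map \(\Delta\) is the inclusion of \(\Delta(\RP^p)\), whose cellular chains constitute the first summand, so on cochains \(\Delta^*\) is the projection onto its dual; and the projection \(\pi_1\), which (as shown there for \(\pi^S_1\)) vanishes on the second summand \(C'\) and restricts to an isomorphism on the first, induces on cochains the inclusion of the dual of the first summand. The remaining bookkeeping is to transport these chain-level identifications from the double cover \(S^p\times S^q\) down to \(Q_{p,q}\) equipped with its local systems and to dualize consistently for cohomology. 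Passing to cohomology and restricting to degrees \(i<p\leq q\), where the second summand has vanished, then shows that \(\Delta^*\) and \(\pi_1^*\) are mutually inverse isomorphisms, as claimed.
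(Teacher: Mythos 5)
Your argument is correct, and it shares its two inputs with the paper's proof (the relation \(\pi_1\circ\Delta=\id_{\RP^p}\) from Figure~\ref{fig:topology}, and the cellular decomposition from Proposition~\ref{cellular-complex} and its corollary), but it closes the case \(i<p\) by a genuinely different mechanism. The paper's proof is a one-liner: from the cellular computation the groups \(\H^i(Q_{p,q},R)\) and \(\H^i(\RP^p,R)\) are each \(0\), \(\ZZ\) or \(\ZZII\) and abstractly agree for \(i<p\), and a split monomorphism (or epimorphism) \(\ZZ\to\ZZ\) or \(\ZZII\to\ZZII\) is automatically an isomorphism, so the split monomorphism \(\pi_1^*\) is an isomorphism with inverse \(\Delta^*\). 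You instead identify \(\Delta^*\) and \(\pi_1^*\) at the (co)chain level with the projection onto, and inclusion of, the dual of the first summand \(C^{(p)}(\modtwo s)\), and observe that the second summand is concentrated in degrees \(\geq q\geq p\). Your route is more robust -- it would work even if the groups were not of the special forms \(0\), \(\ZZ\), \(\ZZII\) -- but it puts the burden on the verification you flag yourself: that \(\Delta\) and \(\pi_1\) are cellular for the chosen structures and that the induced chain maps are exactly the inclusion/projection of the summand (which is indeed established for \(\pi_1^S\) and \(\Delta^S\) inside the proof of Proposition~\ref{cellular-complex} and descends to \(Q_{p,q}\) after tensoring over \(\ZZ[\tau]\)). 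The paper's argument buys brevity at the cost of invoking the explicit additive answer; yours buys naturality at the cost of the chain-level bookkeeping. Both are complete proofs of the statement.
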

\begin{proof}
  This is clear from Figure~\ref{fig:topology} and the fact that any split monomorphism or epimorphism \(\ZZ\to\ZZ\) or \(\ZZII\to\ZZII\) is an isomorphism.
\end{proof}

\begin{corollary}\label{mod-2-reduction}
  For \(p<q\) and \(s\in\ZZ\), each cohomology group \(\H^i(Q_{p,q},\ZZ(s))\) is isomorphic to one of the groups \(0\), \(\ZZII\) or \(\ZZ\).  In all degrees \(i\) in which \(\H^i(Q_{p,q},\twist[s]{\ZZ}) \cong \ZZ\), the mod-\(2\)-reduction map \(\H^i(Q_{p,q},\twist[s]{\ZZ})\to \H^i(Q_{p,q},\ZZII)\) is a surjection \(\ZZ\twoheadrightarrow\ZZII\).  In all degrees \(i\) in which \(\H^i(Q_{p,q},\twist[s]{\ZZ}) \cong \ZZII\), the mod-\(2\)-reduction map is an isomorphism.  
  
  The same assertions are also true in the case \(p=q\) in all degrees \(i\neq p\).  The reduction maps \(\H^p(Q_{p,p},\twist[s]{\ZZ})\to \H^p(Q_{p,p},\ZZII)\) are epimorphisms or monomorphisms as follows:
  \begin{align*}
    \ZZ\oplus\ZZ &\twoheadrightarrow \ZZII\oplus\ZZII \quad \text{when \(p+s\) is odd;}\\
    \ZZII &\hookrightarrow \ZZII\oplus\ZZII \quad \text{when \(p+s\) is even.}
  \end{align*}
\end{corollary}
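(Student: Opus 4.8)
The plan is to reduce everything to the two building-block complexes $C^{(p)}(\modtwo{t})$ via the chain-level decomposition
\[
  C(Q_{p,q},\twist[s]{\ZZ}) \cong C^{(p)}(\modtwo{s}) \oplus C^{(p)}(\modtwo{s+q+1})[q]
\]
recorded above, and to read off both the integral cohomology and its mod-$2$ reduction directly from these explicit complexes. First I would observe that this is a decomposition of complexes of \emph{free} abelian groups whose differentials are all either $0$ or multiplication by $2$. Reducing mod $2$ therefore annihilates every differential, so $C(Q_{p,q},\ZZII)$ has zero differentials and $\H^i(Q_{p,q},\ZZII)$ is simply $\ZZII^{\oplus c_i}$, where $c_i$ is the number of cells in degree $i$ (namely $c_i\leq 1$ for $p<q$, while for $p=q$ we have $c_p=2$ and $c_i\leq 1$ for $i\neq p$). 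Crucially, the decomposition is integral and hence compatible with the reduction map $\H^\bullet(-,\twist[s]{\ZZ})\to\H^\bullet(-,\ZZII)$, so it suffices to analyse each summand separately.

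Next I would compute the cohomology of a single building block $C^{(n)}(\modtwo{t})$ together with its mod-$2$ reduction by dualising the explicit complex, so that the coboundary in cochain degree $k$ is multiplication by $d_{k+1}\in\{0,2\}$. A short bookkeeping of kernels and images gives, for $0<k<n$, that $\H^k$ with $\ZZ$-coefficients equals $\ZZII$ when $k\equiv t\pmod 2$ and vanishes otherwise; that $\H^0=\ZZ$ if $t$ is even and $0$ if $t$ is odd; and that the top group $\H^n$ equals $\ZZII$ when $n\equiv t\pmod 2$ and $\ZZ$ otherwise (this last flip is exactly the orientability of $\RP^n$). Since the reduction map is induced by reducing the cochain groups mod $2$, one sees at once that it is an \emph{isomorphism} on every group isomorphic to $\ZZII$ (the generator of $\ker\delta^k$ maps to the generator of $C^k\otimes\ZZII$, and $\operatorname{im}\delta^{k-1}=2\ZZ$ dies), and a \emph{surjection} $\ZZ\twoheadrightarrow\ZZII$ on every group isomorphic to $\ZZ$.

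Finally I would assemble the two summands. For $p<q$ the degree ranges $[0,p]$ and $[q,p+q]$ of the two summands are disjoint, so in every degree $\H^i(Q_{p,q},\twist[s]{\ZZ})$ is a single building-block group, which is $0$, $\ZZII$ or $\ZZ$, and the reduction behaves as in the previous step; this yields the first part of the statement. For $p=q$ the only overlap is in degree $p$, where the \emph{top} of $C^{(p)}(\modtwo{s})$ and the \emph{bottom} of $C^{(p)}(\modtwo{s+p+1})[p]$ both contribute. Plugging $t=s$ and $t=s+p+1$ into the formulas above yields $\ZZ\oplus\ZZ$ when $p+s$ is odd and $\ZZII\oplus 0$ when $p+s$ is even, while the target is $\ZZII\oplus\ZZII$; the reduction is then the evident surjection $\ZZ\oplus\ZZ\twoheadrightarrow\ZZII\oplus\ZZII$, respectively the monomorphism $\ZZII\hookrightarrow\ZZII\oplus\ZZII$ onto the first factor. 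The only genuinely delicate point is this last step: one must keep track of the parity flip at the top degree of each block and correctly identify \emph{which} copy of $\ZZII$ in the target the surviving class hits. Working directly with the dualised explicit complexes, rather than with the universal-coefficient or Bockstein sequences, keeps this bookkeeping transparent.
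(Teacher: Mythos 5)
Your proposal is correct and takes essentially the same route as the paper: the paper's proof of this corollary is the one-line remark that it is ``immediate from the cellular chain complexes,'' and your argument is precisely a careful expansion of that remark, dualising the two building-block complexes $C^{(p)}(\modtwo{s})$ and $C^{(p)}(\modtwo{s+q+1})[q]$ and tracking the reduction map. Your case bookkeeping, including the degree-$p$ overlap when $p=q$ and the parity flip in the top degree of each block, checks out.
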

\begin{proof}
  This is immediate from the cellular chain complexes. 
\end{proof}

\begin{remark}[Alternative additive computations]\label{sec:top:alt-additive}
  The additive structure of \(\H^\bullet(Q_{p,q},\ZZ\oplus\twist{\ZZ})\) can also be computed using
  localization sequences, (twisted) Thom isomorphisms and a topological version of the blow-up setup
  described in Section~\ref{sec:blow-up}.  That is, there is a topological variant
  of the algebro-geometric computations that will follow, which we leave as an exercise to the
  diligent reader.  A third approach would be to derive the topological results from the geometric ones, using the results of the forthcoming article \cite{HWXZ}, which will establish an isomorphism given by the {\em real realization functor} between the $\I$-cohomology ring of any smooth real cellular variety and the singular cohomology ring of its real points (see Theorem~\ref{HWXZ} below).  The topological computations presented here are intentionally independent of these considerations.
\end{remark}

\subsection{Mod-two cohomology ring}

\begin{proposition}\label{top:mod-2-ring}
  The cohomology ring of the real quadric \(Q_{p,q}\) with coefficients in \(\ZZII\) has the form
  \[
    \H^\bullet(Q_{p,q},\ZZII) =
    \begin{cases}
      \ZZII[\xi,\zeta]/(\xi^{p+1},\zeta^2+\xi^p\zeta) &\text{ if \(p=q\) and \(p\) is even} \\
      \ZZII[\xi,\zeta]/(\xi^{p+1},\zeta^2) &\text{ in all other cases}
    \end{cases}
  \]
  with \(\deg{\xi} = 1\) and \(\deg{\zeta} = q\).  For \(p<q\), the generators \(\xi\) and \(\zeta\) are the unique non-zero elements of the respective degrees.  In the case \(p=q\), \(\xi\) is the unique non-zero element of degree~1, and \(\zeta\) is the unique non-zero element in the kernel of \(\Delta^*\colon \H^p(Q_{p,p},\ZZII)\to \H^p(\RP^p,\ZZII)\) (cf.\  Figure~\ref{fig:topology}).
\end{proposition}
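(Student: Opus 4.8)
The plan is to exploit the fibre bundle structure of the projection \(\pi_1\colon Q_{p,q}\to\RP^p\) from Figure~\ref{fig:topology}. Writing \(Q_{p,q}=(S^p\times S^q)/\tau\) with \(\tau(x,y)=(-x,-y)\), the map \([x,y]\mapsto[x]\) exhibits \(\pi_1\) as the unit sphere bundle \(S(\gamma^{\oplus(q+1)})\) of the \((q+1)\)-fold Whitney sum of the tautological line bundle \(\gamma\) on \(\RP^p\): indeed \(S^p\times_{\pm}S^q\) is the sphere bundle of \(S^p\times_{\pm}\RR^{q+1}=\gamma^{\oplus(q+1)}\). Its fibre is \(S^q\), and its mod-two Euler class is the top Stiefel--Whitney class \(w_{q+1}(\gamma^{\oplus(q+1)})=\xi^{q+1}\), where \(\xi\in\H^1(\RP^p,\ZZII)\) is the generator. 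Since \(p\leq q\) we have \(q+1>p\), so \(\xi^{q+1}=0\) in \(\H^\bullet(\RP^p,\ZZII)=\ZZII[\xi]/(\xi^{p+1})\) and the Euler class vanishes.

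First I would deduce the module structure. As the Euler class is zero, the Gysin sequence of \(\pi_1\) breaks into short exact sequences and the Leray--Hirsch theorem applies: choosing \(\zeta\in\H^q(Q_{p,q},\ZZII)\) restricting to the fundamental class of the fibre \(S^q\) makes \(\H^\bullet(Q_{p,q},\ZZII)\) a free \(\ZZII[\xi]/(\xi^{p+1})\)-module on \(\{1,\zeta\}\). This reproduces the known additive answer (two shifted copies of \(\H^\bullet(\RP^p,\ZZII)\), cf.\ Corollary~\ref{i-and-p-on-cohomology}) with basis \(\{\xi^k,\xi^k\zeta\}_{0\leq k\leq p}\), and exhibits the relation \(\xi^{p+1}=\pi_1^*(0)=0\). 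One then checks that this \(\zeta\) agrees with the \(\zeta\) of the statement: for \(p<q\) it is the unique nonzero class in degree \(q\), while for \(p=q\) the fibre-restriction condition gives \(\delta'\zeta\neq0\), so \(\zeta\notin\operatorname{im}\pi_1^*=\ker\delta'\); since \(\Delta^*\pi_1^*=\id\) by Corollary~\ref{i-and-p-on-cohomology}, the line \(\ker\Delta^*\) is complementary to \(\operatorname{im}\pi_1^*\) in \(\H^p\), whence \(\zeta\) is its unique nonzero element. The ring is therefore generated by \(\xi\) and \(\zeta\), and it only remains to determine \(\zeta^2\in\H^{2q}(Q_{p,q},\ZZII)\).

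For \(p<q\) this is immediate: by the module structure \(\H^{2q}\) is spanned by \(\xi^q\zeta\) and \(\xi^{2q}\), both of which vanish because \(\xi^q=0\) for \(q>p\); hence \(\zeta^2=0\). The remaining case \(p=q\) is the heart of the computation, and here I would use Steenrod squares. Since \(\zeta\) lies in the top fibre degree, \(\zeta^2=\mathrm{Sq}^q\zeta\). Let \(\delta'\) denote the fibre-integration (Gysin connecting) map, normalised so that \(\delta'\zeta=1\); it is both \(\H^\bullet(\RP^p)\)-linear and natural for \(\mathrm{Sq}\). Passing through the Thom space, where the Thom isomorphism \(\Phi\) intertwines \(\mathrm{Sq}^k\) on the Thom class with multiplication by \(w_k(\gamma^{\oplus(q+1)})\), one obtains \(\delta'(\zeta^2)=\delta'(\mathrm{Sq}^q\zeta)=w_q(\gamma^{\oplus(q+1)})\). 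Writing \(\zeta^2=\pi_1^*(\alpha)+\pi_1^*(\beta)\zeta\) in the Leray--Hirsch basis and applying \(\delta'\) yields \(\beta=w_q(\gamma^{\oplus(q+1)})\), while \(\alpha\in\H^{2q}(\RP^p,\ZZII)=0\). Since \(w_q(\gamma^{\oplus(q+1)})=\binom{q+1}{q}\xi^q=(q+1)\xi^q\), I conclude \(\zeta^2=(q+1)\,\xi^q\zeta\), which for \(q=p\) equals \(\xi^p\zeta\) when \(p\) is even and \(0\) when \(p\) is odd. This gives exactly the three presentations claimed. The main obstacle is this last step: assembling the naturality of the connecting map for both the module structure and the Steenrod operations, together with the Thom--Wu identity \(\Phi^{-1}\mathrm{Sq}^k\Phi=\,\cdot\,w_k\), cleanly enough to pin down \(\beta\). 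Everything else is formal once the sphere-bundle description of \(\pi_1\) is in hand.
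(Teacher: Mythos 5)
Your proposal is correct, but it takes a genuinely different route from the paper. The paper first pins down the module structure and the products \(\xi^i\zeta\) using the splitting \(\Delta^*\pi_1^*=\id\) together with Poincar\'e duality, and then obtains \(\zeta^2\) in the case \(p=q\) only as a by-product of the \emph{integral} computation in Theorem~\ref{top:integral-ring}: there the class \(\beta\) is pulled back along the double cover \(S^p\times S^p\to Q_{p,p}\), written as \(a(\sigma_1-\sigma_2)\), and the coefficient \(a=\pm1\) is forced by Poincar\'e duality, after which \(\zeta^2\) is read off as the mod-2 reduction of \(\beta^2\). You instead identify \(\pi_1\colon Q_{p,q}\to\RP^p\) as the sphere bundle of \(\gamma^{\oplus(q+1)}\), observe that its mod-2 Euler class \(\xi^{q+1}\) vanishes on \(\RP^p\), get the module structure from Gysin/Leray--Hirsch, and compute \(\zeta^2=\mathrm{Sq}^q\zeta\) by pushing \(\mathrm{Sq}^q\) through the Thom isomorphism, arriving at the uniform formula \(\zeta^2=(q+1)\xi^q\zeta\) that covers all cases at once. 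Your argument is more self-contained at the mod-2 level and avoids Poincar\'e duality entirely; the paper's argument is heavier here but is doing double duty, since the integral ring is what is actually needed downstream and the Steenrod-square computation does not lift to \(\ZZ\)-coefficients. One small point to tidy up: the Leray--Hirsch generator is only determined up to adding \(\xi^p\) when \(p=q\) (both \(\zeta_0\) and \(\zeta_0+\xi^p\) restrict to the fibre fundamental class), so to match the normalization \(\zeta\in\ker\Delta^*\) of the statement you should either take the generator to be the nonzero element of \(\ker\Delta^*\) (which indeed has \(\delta'\neq 0\) since \(\ker\delta'=\operatorname{im}\pi_1^*\) meets \(\ker\Delta^*\) trivially) or note that the relation \(\zeta^2=(q+1)\xi^q\zeta\) is unchanged under \(\zeta\mapsto\zeta+\xi^p\) because \(\xi^{2p}=0\); either remark closes the gap.
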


\begin{proof}[Proof of \ref{top:mod-2-ring}, apart from the computation of \(\zeta^2\) when \(p=q\)]
  Write \(h^i\) for \(\H^i(Q_{p,q},\ZZII)\).  Let \(\xi\) be the generator of \(h^1\).  By Corollary~\ref{i-and-p-on-cohomology} and the known cohomology of \(\RP^p\), each of the powers \(\xi^i\) generates \(h^i\) for \(i\in\{0,\dots,p-1\}\). Moreover, \(\xi^p\in h^p\) is non-zero, and \(\xi^{p+1} = 0\). Let \(\zeta = \zeta_0\) denote the generator of the kernel of \(\Delta^*\) on \(h^q\).  When \(p<q\), \(\ker(\Delta^*) = h^q\), so \(\zeta_0\) is simply a generator of \(h^q\). When \(p = q\), the elements \(\xi^p\) and \(\zeta_0\) together form a basis of \(h^p\).
  Pick generators \(\zeta_1, \dots, \zeta_p\) in the remaining degrees \(h^{1+q},\dots, h^{p+q}\), so that \(\deg{\zeta_i} = i+q\). Poincar\'{e} duality implies
  \(
  \xi^{p-i} \zeta_i = \zeta_p 
  \)
  for all \(i\in\{0,\dots,p\}\).
  It follows that
  \(
  \xi^i \zeta_0  = \zeta_i
  \)
  for \(i\in\{0,\dots,p\}\). In all cases with \(p<q\), it is moreover clear for degree reasons that \(\zeta^2 = 0\), so in these cases the proof is complete.
  In the case \(p=q\), it remains to compute \(\zeta^2\).  This will be done in the course of the proof of Theorem~\ref{top:integral-ring} below.
\end{proof}

\subsection{Integral cohomology ring}
The integral cohomology ring of \(\RP^p\) with twisted integral coefficients is as follows:
\begin{align}\label{eq:H(RP^p)}
  \H^\bullet(\RP^{p},\ZZ\oplus\twist{\ZZ}) 
  &= \ZZ[\xi,\alpha]/(2\xi, \xi^{p+1},\xi\alpha,\alpha^2)
    \quad\text{ with }
    \begin{cases}
      \deg{\xi} = \twistedbideg{1}{1}\\
      \deg{\alpha} = \twistedbideg{p}{p+1}
    \end{cases}
\end{align}
Mod-\(2\)-reduction to
\(  
\H^\bullet(\RP^{p},\ZZII) = \ZZII[\xi]/\xi^{p+1}
\)
is determined by \(\xi\mapsto\xi\) and \(\alpha\mapsto \xi^p\).  

\begin{theorem}\label{top:integral-ring}
  The integral cohomology ring of the real quadric \(Q_{p,q}\) has the following form:
  \begin{align*}
    \H^\bullet(Q_{p,q},\ZZ\oplus\twist{\ZZ}) 
    &\cong 
      \begin{cases}
        \ZZ[\xi,\alpha,\beta]/(2\xi,\xi^{p+1},\xi\alpha,\alpha^2,\beta^2-\alpha\beta) & \text{ if \(p = q\) and \(p\) is even} \\
        \ZZ[\xi,\alpha,\beta]/(2\xi,\xi^{p+1},\xi\alpha,\alpha^2,\beta^2) & \text{ in all other cases}
      \end{cases}
  \end{align*}
  with generators of degrees
  \(\deg{\xi}    = \twistedbideg{1}{1}\),
  \(\deg{\alpha} = \twistedbideg{p}{p+1}\),
  \(\deg{\beta}  = \twistedbideg{q}{q+1}\).
  The generators \(\xi\) and \(\alpha\) are pullbacks under \(\pi_1\) (cf.\ Figure~\ref{fig:topology}) of the generators of \(\H^\bullet(\RP^p,\ZZ\oplus\ZZ(1))\) that have the same names in~\eqref{eq:H(RP^p)}.
  Under mod-\(2\)-reduction, \(\xi\mapsto \xi\), \(\alpha\mapsto \xi^p\) and \(\beta\mapsto \zeta\).
  See Table~\ref{table:split-examples} for some examples.
\end{theorem}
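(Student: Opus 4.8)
The plan is to write down explicit generators, inherit the easy relations from $\RP^{p}$, and reduce the whole statement to one square in top degree that is computed mod $2$. Throughout I take the additive answer as given by the cellular chain complexes (the Corollary following Proposition~\ref{cellular-complex}), which already determines $\H^{\bullet}(Q_{p,q},\ZZ\oplus\twist{\ZZ})$ in every bidegree. Set $\xi:=\pi_{1}^{*}\bar\xi$ and $\alpha:=\pi_{1}^{*}\bar\alpha$, the pullbacks along $\pi_{1}\colon Q_{p,q}\to\RP^{p}$ of the generators in \eqref{eq:H(RP^p)}. As $\pi_{1}^{*}$ is a ring homomorphism, the relations $2\xi=0$, $\xi^{p+1}=0$, $\xi\alpha=0$, $\alpha^{2}=0$ pass verbatim from \eqref{eq:H(RP^p)}, while Corollary~\ref{i-and-p-on-cohomology} keeps $1,\xi,\dots,\xi^{p},\alpha$ nonzero and independent. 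Choose $\beta$ to be a generator of the free part living in degree $q$ that comes from the second summand of the cellular complex, normalised so that $\beta\mapsto\zeta$ under mod-$2$ reduction (Corollary~\ref{mod-2-reduction}, Proposition~\ref{top:mod-2-ring}). Reduction then sends $\xi^{i}\beta\mapsto\xi^{i}\zeta$ and $\alpha\beta\mapsto\xi^{p}\zeta$, all nonzero in $\H^{\bullet}(Q_{p,q},\ZZII)$, so $\xi^{i}\beta$ $(0\le i\le p)$ together with $\alpha\beta$ exhaust the second summand. Matching the monomial basis $\{\xi^{i}\}\cup\{\xi^{i}\beta\}\cup\{\alpha\}\cup\{\alpha\beta\}$ against the additive answer shows that $\xi,\alpha,\beta$ generate and that the sole remaining unknown is the value of $\beta^{2}$.

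The class $\beta^{2}$ has bidegree $\twistedbideg{2q}{0}$, and I treat the three cases separately. If $p<q$ then $2q>p+q=\dim Q_{p,q}$, so $\beta^{2}=0$ for dimension reasons. If $p=q$ is odd then $\beta$ lies in the odd untwisted degree $p$, so graded commutativity gives $2\beta^{2}=0$, and since $\H^{2p}(Q_{p,p},\ZZ)\cong\ZZ$ is torsion-free we get $\beta^{2}=0$. The case $p=q$ even is the hard one. There $\alpha,\beta\in\H^{p}(Q_{p,p},\twist{\ZZ})$, and Poincaré duality with the self-dual coefficients $\twist{\ZZ}$ on the orientable manifold $Q_{p,p}$ makes the cup pairing on the free rank-two group $\ZZ\langle\alpha\rangle\oplus\ZZ\langle\beta\rangle$ unimodular; together with $\alpha^{2}=0$ this forces $\alpha\beta$ to generate $\H^{2p}(Q_{p,p},\ZZ)\cong\ZZ$, so $\beta^{2}=c\,\alpha\beta$ for a single integer $c$. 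Once $c$ is known to be odd, replacing $\beta$ by $\pm\beta+2m\alpha$ (which preserves $\beta\mapsto\zeta$, changes $c$ by a multiple of $4$, and permits a sign change) normalises $c$ to $1$ and yields the relation $\beta^{2}-\alpha\beta=0$.

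The crux is therefore to prove that $c$ is odd, equivalently — via $\beta^{2}\mapsto\zeta^{2}$ and $\alpha\beta\mapsto\xi^{p}\zeta$ under Corollary~\ref{mod-2-reduction} — that $\zeta^{2}=\xi^{p}\zeta\neq0$ in $\H^{\bullet}(Q_{p,p},\ZZII)$; this is exactly the square left undetermined in Proposition~\ref{top:mod-2-ring}. I would compute it with Steenrod operations. As $\deg{\zeta}=p$ we have $\zeta^{2}=\operatorname{Sq}^{p}\zeta$, and Wu's formula gives $\langle\operatorname{Sq}^{p}\zeta,[Q_{p,p}]\rangle=\langle v_{p}\zeta,[Q_{p,p}]\rangle$ for the Wu class $v_{p}$. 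To find $v$, observe that $Q_{p,p}\subset\RP^{2p+1}$ is a quadric hypersurface, so its normal bundle is the restriction of $\gamma^{\otimes2}$ for $\gamma$ the tautological line bundle, with $w_{1}(\gamma^{\otimes2})=2w_{1}(\gamma)=0$; hence $w(Q_{p,p})=(1+\xi)^{2p+2}$ lies in the subring $\ZZII[\xi]$, where $\xi=w_{1}(\gamma)|_{Q}$ generates $\H^{1}$. The total Wu class is then the degree-$\le p$ part of $\operatorname{Sq}^{-1}\bigl((1+\xi)^{2p+2}\bigr)$ computed inside $\ZZII[\xi]$, and by the classical formula for the Wu classes of projective space this gives $v_{p}=\binom{p+1}{p}\xi^{p}=(p+1)\xi^{p}$. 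For $p$ even this is $\xi^{p}$, so $\langle\zeta^{2},[Q_{p,p}]\rangle=\langle\xi^{p}\zeta,[Q_{p,p}]\rangle=1$ and $\zeta^{2}=\xi^{p}\zeta$; for $p$ odd it vanishes, recovering $\zeta^{2}=0$ and completing Proposition~\ref{top:mod-2-ring} in all cases.

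I expect this Wu-class computation to be the main obstacle: the genuine content sits in correctly identifying $w(Q_{p,p})$ (the normal bundle of the quadric and the restriction of the hyperplane class to $\xi$) and in invoking Wu's formula, whereas the remaining work is bookkeeping. With $c$ shown to be odd and normalised to $1$, a final comparison of the monomial basis with the additive description from the cellular complexes rules out any further relations and identifies the ring in each of the three cases as claimed.
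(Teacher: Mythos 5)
Your proposal is correct, and for the crucial case \(p=q\) it takes a genuinely different route from the paper. The paper computes the integral products first, by analysing the double cover \(\pi\colon S^p\times S^p\to Q_{p,p}\): it pulls \(\alpha\) and \(\beta\) back to \(\H^\bullet(S^p\times S^p)\) (where \(\pi^*\alpha_i=2\sigma_i\) and \(\pi^*\beta=a(\sigma_1-\sigma_2)\)), shows \(\pi^*\) is multiplication by \(\pm2\) on \(\H^{2p}\), and pins down \(a=\pm1\) by Poincar\'e duality; the mod-\(2\) relation \(\zeta^2=\xi^p\zeta\) is then deduced as a corollary of the integral result. You go the other way: you compute \(\zeta^2=\mathrm{Sq}^p\zeta\) directly via the Wu class, using \(w(Q_{p,p})=(1+\xi)^{2p+2}\) (normal bundle \(\Oo(2)\) with vanishing \(w_1\)) and \(v=\mathrm{Sq}^{-1}(w)\), and then lift to the integral statement via the mod-\(2\) reduction and the change of basis \(\beta\mapsto\pm\beta+2m\alpha\). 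I checked the Wu computation: \(\mathrm{Sq}\) preserves the subring \(\ZZII[\xi]/(\xi^{p+1})\) and is unipotent on it, so \(v\) does lie there and its coefficients in degrees \(\le p\) satisfy the same recursion as for \(\RP^{2p+1}\), giving \(v_p=(p+1)\xi^p\); hence \(\zeta^2=\xi^p\zeta\) for \(p\) even and \(\zeta^2=0\) for \(p\) odd, in agreement with the paper, and your normalisation \(c'=\epsilon c+4m\) does reach \(c'=1\) for any odd \(c\). Your route completes Proposition~\ref{top:mod-2-ring} independently of the integral computation (the paper's logic runs in the opposite direction) and avoids the transfer/degree argument for \(\pi^*\) on \(\H^{2p}\); the paper's route needs no characteristic classes and yields the finer integral identities \(\pi^*\beta=\sigma_1-\sigma_2\) and \(2\beta=\alpha_1-\alpha_2\) as byproducts.

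Two small points to tighten. First, for \(p<q\) the fact that \(\alpha\beta\) reduces to the nonzero class \(\xi^p\zeta\) only shows it is an \emph{odd multiple} of a generator of \(\H^{p+q}(Q_{p,q},\twist[p+q]{\ZZ})\cong\ZZ\); to see that it actually generates you still need the twisted Poincar\'e duality argument you invoke in the \(p=q\) even case (and likewise for \(p=q\) odd, where the pairing is skew-symmetric and unimodularity again forces \(\alpha\beta\) to generate). Second, the appeal to ``the classical formula for the Wu classes of projective space'' should be accompanied by the observation that the equations determining \(\mathrm{Sq}^{-1}\bigl((1+\xi)^{2p+2}\bigr)\) in degrees \(\le p\) are identical for \(Q_{p,p}\) and \(\RP^{2p+1}\), and that the solution has no \(\zeta\)-component (which follows since \(\mathrm{Sq}\) is already bijective on the subring generated by \(\xi\), or by evaluating \(v_p\) against \(\xi^p\)). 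Both are one-line fixes.
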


\begin{table}
  \newcommand*{\h}{\operatorname{h}}
  \newcommand*{\zz}{\mathrm{z}\hspace{-0.6ex}\mathrm{z}}
  \newcommand*{\two}[2]{\oplus\;\raisebox{-0.2ex}{\parbox{3.5em}{\(#1\)\\[-3pt]\(#2\)}}}
  \begin{adjustwidth}{-5em}{-5em}
    \begin{center}
      \newcolumntype{L}{>{\(}l<{\)}}
      \newcolumntype{H}{>{\centering\arraybackslash\(}p{3.5em}<{\)}}
      \setlength{\tabcolsep}{0pt}
      \renewcommand{\arraystretch}{0.8}
      \begin{tabular}{LHHHHHHHHHHHHL} 
        \toprule
        & {{}_0} & {{}_1} & {{}_2}   & {{}_3}   & {{}_4}                    & {{}_5}                    & {{}_6}        & {{}_7}        & {{}_8}         & {{}_9}        & {{}_{10}}      & {{}_{11}} &                         \\[6pt]
        \midrule
        \H^\bullet(Q_{4,4},\ZZ)              & \ZZ 1  & 0      & \zz\xi^2 & 0        & \zz\xi^4                  & \zz\xi\beta               & 0             & \zz\xi^3\beta & \ZZ\alpha\beta &               &                &           & (\beta^2 = \alpha\beta) \\[3pt]
        \H^\bullet_-(Q_{4,4},\twist[1]{\ZZ}) & 0      & \zz\xi & 0        & \zz\xi^3 & \two{\ZZ\alpha}{\ZZ\beta} & 0                         & \zz\xi^2\beta & 0             & \zz\xi^4\beta                                                                         \\[8pt]
        \h^\bullet(Q_{4,4})                  & \zz 1  & \zz\xi & \zz\xi^2 & \zz\xi^3 & \two{\zz\xi^4}{\zz\zeta}  & \zz\xi\zeta               & \zz\xi^2\zeta & \zz\xi^3\zeta & \zz\xi^4\zeta  &               &                &           & (\zeta^2 = \xi^4\zeta)  \\[8pt]
        \midrule
        \H^\bullet(Q_{4,5},\ZZ)              & \ZZ 1  & 0      & \zz\xi^2 & 0        & \zz\xi^4                  & \ZZ\beta                  & 0             & \zz\xi^2\beta & 0              & \zz\xi^4\beta                                                        \\[3pt]
        \H^\bullet(Q_{4,5},\twist[1]{\ZZ})   & 0      & \zz\xi & 0        & \zz\xi^3 & \ZZ\alpha                 & 0                         & \zz\xi\beta   & 0             & \zz\xi^3\beta  & \ZZ\alpha\beta                                                       \\[8pt]
        \h^\bullet(Q_{4,5})                  & \zz 1  & \zz\xi & \zz\xi^2 & \zz\xi^3 & \zz\xi^4                  & \zz\zeta                  & \zz\xi\zeta   & \zz\xi^2\zeta & \zz\xi^3\zeta  & \zz\xi^4\zeta &                &           &                         \\[8pt]
        \midrule
        \H^\bullet(Q_{5,5},\ZZ)              & \ZZ 1  & 0      & \zz\xi^2 & 0        & \zz\xi^4                  & \two{\ZZ\alpha}{\ZZ\beta} & 0             & \zz\xi^2\beta & 0              & \zz\xi^4\beta & \ZZ\alpha\beta &           & (\beta^2 = 0)           \\[3pt]
        \H^\bullet(Q_{5,5},\twist[1]{\ZZ})   & 0      & \zz\xi & 0        & \zz\xi^3 & 0                         & \zz\xi^5                  & \zz\xi\beta   & 0             & \zz\xi^3\beta  & 0             & \zz\xi^5\beta                                        \\[8pt]
        \h^\bullet(Q_{5,5})                  & \zz 1  & \zz\xi & \zz\xi^2 & \zz\xi^3 & \zz\xi^4                  & \two{\zz\xi^5}{\zz\zeta}  & \zz\xi\zeta   & \zz\xi^2\zeta & \zz\xi^3\zeta  & \zz\xi^4\zeta & \zz\xi^5\zeta  &           & (\zeta^2 = 0)           \\[8pt]
        \midrule
        \H^\bullet(Q_{5,6},\ZZ)              & \ZZ 1  & 0      & \zz\xi^2 & 0        & \zz\xi^4                  & \ZZ\alpha                 & 0             & \zz\xi\beta   & 0              & \zz\xi^3\beta & 0              & \zz\xi^4\beta                       \\[3pt]
        \H^\bullet(Q_{5,6},\twist[1]{\ZZ})   & 0      & \zz\xi & 0        & \zz\xi^3 & 0                         & \zz\xi^5                  & \ZZ\beta      & 0             & \zz\xi^2\beta  & 0             & \zz\xi^4\beta  & \ZZ\alpha\beta                      \\[8pt]
        \h^\bullet(Q_{5,6})                  & \zz 1  & \zz\xi & \zz\xi^2 & \zz\xi^3 & \zz\xi^4                  & \zz\xi^5                  & \zz\zeta      & \zz\xi\zeta   & \zz\xi^2\zeta  & \zz\xi^3\zeta & \zz\xi^4\zeta  & \zz\xi^5  &                         \\[2pt]
        \bottomrule
      \end{tabular}
    \end{center}
  \end{adjustwidth}
  \vspace{1ex}
  \caption{%
    The integral cohomology and the mod-\(2\)-cohomology \(\h^\bullet := \H^\bullet(-,\ZZII)\) of some real split quadrics, with \(\zz\) denoting \(\ZZII\). 
  }\label{table:split-examples}
\end{table}

\begin{proof}[Proof of Theorem~\ref{top:integral-ring}, Part~I (integral products and reduction formulas)]
  It is easy to verify that the quotient rings \(\ZZ[\xi,\alpha,\beta]/(\dots)\) displayed in the theorem have the correct additive structure.  To verify that the ring structure is correct, we begin by computing the products of the non-torsion classes (\(\alpha\) and \(\beta\)).

  We first consider the cases with \(p < q\). In these cases, non-torsion classes \(\alpha\) and \(\beta\) of the degrees specified in Theorem~\ref{top:integral-ring} are unique up to signs, and we already know from the additive structure that \(\H^i(Q_{p,q},\ZZ(s))=0\) in the bidegrees \((i,s)=(2p,\bar 0)\) and \((i,s)=(2q,\bar 0)\) in which \(\alpha^2\) and \(\beta^2\) reside.  So \(\alpha^2=\beta^2=0\). It now follows from (the twisted version of) Poincar\'{e} duality (see \cite[Theorem~5.7 and the following remarks]{davis-kirk}) that the product \(\alpha\beta\) is a generator.  
  
  The case \(p=q\) requires more thought. We need to understand the product
  \[
    \H^p(Q_{p,p},\twist[p+1]{\ZZ}) \times \H^p(Q_{p,p},\twist[p+1]{\ZZ}) \to \H^{2p}(Q_{p,p},\ZZ),
  \]
  where \(\H^{p}(Q_{p,p},\twist[p+1]{\ZZ})=\ZZ\oplus \ZZ\).  Recall from Figure~\ref{fig:topology} our notation \(\pi\) for the two-fold cover \(\pi\colon S^p\times S^p\to Q_{p,p}\), the notation \(\pi_1,\pi_2\colon Q_{p,p}\rightrightarrows \RP^p\) for the compositions of the two-fold cover \(Q_{p,p}\to \RP^p\times\RP^p\) with the projections onto the two factors, and \(\pi^S_1,\pi^S_2\colon S^p\times S^p\rightrightarrows S^p\) for the projections onto the two sphere factors.
  Consider the two-fold cover \(S^p\to \RP^p\).  The induced map on the top cohomology group, \(\H^p(S^p,\ZZ)\leftarrow \H^p(\RP^p,\ZZ(p+1))\), is multiplication by \(\pm 2\), as can be seen either explicitly from the cellular computations or from degree considerations.
  We may therefore choose generators \(\sigma\in \H^p(S^p,\ZZ)\) and \(\alpha\in \H^p(\RP^p,\twist[p+1]{\ZZ})\) such that \(\alpha\) maps to \(2\sigma\) under this pullback map.  Define \(\alpha_i := \pi_i^*(\alpha)\) and \(\sigma_i := (\pi_i^S)^*\sigma\), so that
  \begin{equation*}
    \pi^*(\alpha_i) = 2\sigma_i.
  \end{equation*}
  Choose a generator \(\beta\) of \(\ker(\Delta^*)\subset \H^{p}(Q_{p,p},\twist[p+1]{\ZZ})\). As \(\pi_1^*\) is split by \(\Delta^*\), it follows that \(\alpha_1\), \(\beta\) form a \(\ZZ\)-basis of \(\H^p(Q_{p,p},\twist[p+1]{\ZZ})\). The pullback of \(\beta\) lives in the kernel of \((\Delta^S)^*\), hence can be written as 
  \begin{equation*}
    \pi^*\beta = a(\sigma_1 - \sigma_2)
  \end{equation*}
  for some \(a \in \ZZ\).

  We now analyse what happens in cohomological degree~\(2p\). 
  As the generators \(\alpha_i\) are pulled back from \(\RP^p\), we find that \(\alpha_1^2 = \alpha_2^2=0\).   In  \(\H^\bullet(S^p\times S^p,\ZZ)\), we know that similarly \(\sigma_1^2 = \sigma_2^2 = 0\), while \(\sigma_1\sigma_2\) is a generator in degree \(2p\).
  The pullback \(\pi^*\colon \H^{2p}(S^p\times S^p,\ZZ)\leftarrow \H^{2p}(Q_{p,p},\ZZ)\) is multiplication by \(\pm 2\).
  (Indeed, degree considerations show that \(\pi^*\) is either \(\pm 2\) or zero, and by composing with the transfer we find that \(\pi^*\) is either \(\pm 2\) or \(\pm 1\).)
  We may therefore choose a generator \(\gamma\in \H^{2p}(Q_{p,p},\ZZ)\) such that \(\pi^*\gamma = 2\sigma_1\sigma_2\).  For this generator, we find \(\alpha_1\alpha_2 = 2\gamma\).
  For the remaining products of \(\alpha_1\) and \(\beta\), we obtain:
  \begin{align*}
    \pi^*(\beta^2) 
    & =  a^2 (-\sigma_1\sigma_2 - (-1)^{p^2}\sigma_1\sigma_2) =  -a^2\tfrac{1}{2}(1+(-1)^p)\pi^*\gamma, 
    &  & \text{ so} & \beta^2       & = -a^2\tfrac{1}{2}(1+(-1)^p)\gamma, 
    \\
    \pi^*(\alpha_1\beta) 
    & = 2a \sigma_1 (\sigma_1-\sigma_2) = -2a \sigma_1\sigma_2 = -a\pi^*\gamma,
    &  & \text{ so} & \alpha_1\beta & = -a\gamma
  \end{align*}
  Thus, in the basis of \(\H^p(Q_{p,p},\twist[p+1]{\ZZ})\) given by \(\alpha_1, \beta\), the product is described by the following matrix:
  \[\begin{pmatrix}
      0  & -(-1)^pa \\
      -a & -a^2\tfrac{1}{2}(1+(-1)^p)
    \end{pmatrix}\]
  By Poincar\'{e} duality, this matrix must define a perfect pairing on \(\ZZ^2\). So \(a=\pm 1\), and by changing the sign of \(\beta\) if necessary, we may as well assume that \(a=1\).  It follows that \(\alpha_1\beta\) is a generator of \(\H^{2p}(Q_{p,p},\ZZ)\).  For even \(p\), we moreover find that \(\beta^2 = \alpha_1\beta\);  for odd \(p\), we find that \(\beta^2 = 0\). Altogether, this is precisely the result displayed above, with \(\alpha_1\) written as \(\alpha\). 
  (We also find that \(2\beta = \alpha_1-\alpha_2\).) 
  
  We now determine the images of the various generators under mod-\(2\)-reduction.
  As the elements \(\xi\) and \(\alpha\) are pulled back from cohomology classes of \(\RP^p\) under \(\pi_1\), the formulas for these elements follow from the corresponding formulas for \(\RP^p\).  The elements \(\beta\in \H^p(Q_{p,p},\ZZ(p+1))\) and \(\zeta\in \H^p(Q_{p,p},\ZZII)\) were both chosen as generators of the kernel of the map induced by \(\Delta\colon \RP^p\hookrightarrow Q_{p,p}\). As \(\Delta\) is split by \(\pi_1\), it follows that \(\beta\mapsto \zeta\), as claimed.
\end{proof}
\begin{proof}[End of proof of Proposition~\ref{top:mod-2-ring}]
  We have just observed that \(\beta\) reduces to \(\zeta\).  The formula for \(\zeta^2\) displayed in Proposition~\ref{top:mod-2-ring} is therefore immediate from the formula for \(\beta^2\) that we have already verified.
\end{proof}

\begin{proof}[Proof of Theorem~\ref{top:integral-ring}, Part~II (torsion products)]
  As observed in Corollary~\ref{mod-2-reduction}, each cohomology group \(\H^i(Q_{p,q},\ZZ(s))\) is either isomorphic to \(\ZZII\) or free abelian.  For the proof of Theorem~\ref{top:integral-ring}, it remains to compute the products of two-torsion cohomology classes with arbitrary cohomology classes.  So take a two-torsion class \(\mu\in\H^i(Q_{p,q},\ZZ(s))\) and an arbitrary class \(\mu'\in\H^{i'}(Q_{p,q},\ZZ(s'))\). Then the product \(\mu\mu'\in\H^{i+i'}(Q_{p,q},\ZZ(s+s'))\) is again a two-torsion class.  If \(\H^{i+i'}(Q_{p,q},\ZZ(s+s'))\) is free abelian, we deduce \(\mu\mu' = 0\).  If, on the other hand, \(\H^{i+i'}(Q_{p,q},\ZZ(s+s'))\cong \ZZII\), then by Corollary~\ref{mod-2-reduction} the reduction map \(\H^{i+i'}(Q_{p,q},\ZZ(s+s'))\to \H^{i+i'}(Q_{p,q},\ZZII)\) is injective. We can therefore compute the product \(\mu\mu'\) by passing to the mod-\(2\)-reductions \(\bar\mu\) and \(\bar\mu'\) and computing the product \(\bar\mu\bar\mu'\) in \(\H^\bullet(Q_{p,q},\ZZII)\), whose ring structure we already know.
\end{proof}

\section{The blow-up setup of Balmer-Calm\`es}\label{sec:blow-up}
We now turn to algebraic geometry.   Recall from Section~\ref{sec:notation} that \(Q_n\) denotes the split \(n\)-dimensional quadric over a smooth scheme \(S\) over a field \(F\),
where \(n \geq 3\). We begin by summarizing some material from \cite{Ne}.

\begin{lemma}\label{lem:normal-bundles-in-Q}
  Let \(\sheaf N_{\PPpX}\) denote the normal bundle of \(\PPpX\) in \(Q_n\).  
  Then \(\det(\sheaf N_{\PPpX})\cong \Oo_{\PPpX}(n-p-1)\).
\end{lemma}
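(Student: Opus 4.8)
The plan is to compute the determinant by comparing the three normal bundles attached to the chain of regular closed immersions $\PPpX \xhookrightarrow{\iota_x} Q_n \xhookrightarrow{i} \PP^{n+1}$ over $S$. Since both $Q_n$ (a split, hence smooth, quadric) and $\PPpX$ (a relative projective space) are smooth over $S$, the immersion $\iota_x$ is regular, so $\sheaf{N}_{\PPpX} := \sheaf{N}_{\PPpX/Q_n}$ is a genuine vector bundle and fits into the short exact sequence of normal bundles
\[
  0 \to \sheaf{N}_{\PPpX/Q_n} \to \sheaf{N}_{\PPpX/\PP^{n+1}} \to \sheaf{N}_{Q_n/\PP^{n+1}}\big|_{\PPpX} \to 0.
\]
Taking top exterior powers and using multiplicativity of the determinant on short exact sequences gives
\[
  \det \sheaf{N}_{\PPpX} \cong \det \sheaf{N}_{\PPpX/\PP^{n+1}} \otimes \big(\det \sheaf{N}_{Q_n/\PP^{n+1}}\big)^{-1}\big|_{\PPpX},
\]
so it remains only to identify the two determinants on the right.

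First I would treat the outer two bundles, both of which are standard. The quadric $Q_n$ is cut out of $\PP^{n+1}$ by a single homogeneous equation of degree two, namely~\eqref{eq:Q-split}, so its normal bundle is the line bundle $\sheaf{N}_{Q_n/\PP^{n+1}} \cong \Oo_{Q_n}(2)$; restricting along $\iota_x$ and recalling that $\Oo(1)$ restricts to the standard twisting sheaf $\Oo_{\PPpX}(1)$ yields $\det \sheaf{N}_{Q_n/\PP^{n+1}}\big|_{\PPpX} \cong \Oo_{\PPpX}(2)$. Next, $\PPpX$ is a \emph{linear} subspace of $\PP^{n+1}$: by the definitions in Section~\ref{sec:notation} it is cut out by the vanishing of $n+1-p$ of the homogeneous coordinates (namely $y_0,\dots,y_p$ when $n=2p$, and $y_0,\dots,y_p,z$ when $n=2p+1$). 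Hence its conormal bundle in $\PP^{n+1}$ is $\Oo_{\PPpX}(-1)^{\oplus (n+1-p)}$, so $\det \sheaf{N}_{\PPpX/\PP^{n+1}} \cong \Oo_{\PPpX}(n+1-p)$.

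Combining these in the second display gives $\det \sheaf{N}_{\PPpX} \cong \Oo_{\PPpX}(n+1-p)\otimes \Oo_{\PPpX}(-2) \cong \Oo_{\PPpX}(n-p-1)$, as claimed.

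I expect no serious obstacle here: the argument is routine determinant bookkeeping. The only points requiring care are (i) confirming that all immersions are regular and that the normal-bundle sequence and the determinant formula remain valid in the relative setting over the smooth base $S$ — which holds since $Q_n$ and $\PPpX$ are smooth over $S$ and $Q_n$ is a relative degree-two hypersurface — and (ii) correctly counting the codimension of $\PPpX$ as $n+1-p$ in both the even and the odd case, which is exactly what produces the uniform answer. Alternatively, one can bypass the normal-bundle sequence and argue by adjunction: the relative dualizing sheaves satisfy $\omega_{Q_n/S}\cong \Oo_{Q_n}(-n)$ (from $\omega_{\PP^{n+1}/S}\cong\Oo(-n-2)$ and $\Oo_{Q_n}(2)$) and $\omega_{\PPpX/S}\cong\Oo_{\PPpX}(-p-1)$, whence $\det\sheaf{N}_{\PPpX}\cong\omega_{\PPpX/S}\otimes\big(\omega_{Q_n/S}\big|_{\PPpX}\big)^{-1}\cong\Oo_{\PPpX}(n-p-1)$.
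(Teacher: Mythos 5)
Your argument is correct. Note, however, that the paper does not actually prove this lemma: it simply cites Nenashev \cite[Lemma~6.1 and above Theorem~6.4]{Ne}, so you have supplied a self-contained proof where the authors deferred to the literature. Your route is the standard one and checks out in every detail: the immersions \(\PPpX\hookrightarrow Q_n\hookrightarrow\PP^{n+1}\) are regular because all three schemes are smooth over \(S\) (the paper itself invokes this, with the same reference to EGA IV, just before Proposition~\ref{pblup}); the codimension of \(\PPpX\) in \(\PP^{n+1}\) is \(n+1-p\) in both parity cases (it is cut out by \(p+1\) linear forms when \(n=2p\) and by \(p+2\) when \(n=2p+1\)), giving \(\det\sheaf N_{\PPpX/\PP^{n+1}}\cong\Oo_{\PPpX}(n+1-p)\); the quadric is a degree-two hypersurface, giving \(\sheaf N_{Q_n/\PP^{n+1}}\cong\Oo_{Q_n}(2)\); and the restriction \(\Oo_{Q_n}(1)|_{\PPpX}\cong\Oo_{\PPpX}(1)\) is exactly the content of Lemma~\ref{lem:picquadrics}. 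The resulting twist \(n+1-p-2=n-p-1\) agrees with the value \(q-1\) (for \(q\) the codimension of \(\PPpX\) in \(Q_n\)) used later in the paper, e.g.\ in the identification \(\omega_x\cong\Oo(1-q)\) above Theorem~\ref{thm:keyexactsequence}. Your adjunction variant is an equally valid shortcut. The only gain of the paper's approach is brevity; yours makes the computation transparent and independent of \cite{Ne}.
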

\begin{proof}
  See \cite[Lemma~6.1 and above Theorem~6.4]{Ne}.
\end{proof}

\begin{definition}\label{def:affinebundle}
  An affine space bundle of rank \(r\) is a Zariski locally trivial fibre bundle \(f\colon X \rightarrow Y\) with fibres isomorphic to \(\AA^r\). This means that \(Y = \cup_{i \in I} U_i\) can be covered by open subschemes \(U_i\) such that \(f^{-1}(U_i) \cong U_i \times \AA^r\) over \(U_i\) for every \(i \in I\). 
\end{definition}

\begin{lemma}\label{lem:rho-is-vb}
  The morphism \(\rho\colon Q_n - \PPpX \to \PPpY\) described in Section~\ref{sec:notation} is an affine space bundle of rank \(p+1\) with \(i_y\colon \PPpY \hookrightarrow Q_n - \PPpX\) as a global section.  When \(n\) is even, \(\rho\) is even a vector bundle, and \(i_y\) is its zero section. 
\end{lemma}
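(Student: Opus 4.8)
The plan is to prove both assertions by explicit local trivialisation over the standard affine cover of the base $\PPpY\cong\PP^p$. Write $U_j:=\{y_j\neq 0\}$ for $j\in\{0,\dots,p\}$; these charts cover $\PPpY$. The defining equation \eqref{eq:Q-split} of $Q_n$ is linear in each $x_i$, with the coefficient of $x_j$ equal to $y_j$, so over $\rho^{-1}(U_j)$ --- where $y_j\neq 0$ --- it can be solved uniquely for $x_j$. Taking the remaining coordinates $(x_i)_{i\neq j}$, together with $z$ in the odd case, as fibre coordinates then produces an isomorphism $\rho^{-1}(U_j)\xrightarrow{\cong}U_j\times\AA^{r}$ over $U_j$ under which $\rho$ becomes the first projection. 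This exhibits $\rho$ as a Zariski-locally trivial bundle with affine-space fibres in the sense of Definition~\ref{def:affinebundle}, of rank $r=p+1$ when $n=2p+1$. I would then dispatch the section directly from the formulas: $i_y$ sends a point of $\PPpY$ to the point of $Q_n$ with the same $y$-coordinates and all other coordinates zero, so $\rho\circ i_y=\id_{\PPpY}$, and in each chart $i_y(\PPpY)$ is exactly the vanishing locus of the fibre coordinates, i.e.\ the distinguished point of every fibre.

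To upgrade this to a vector-bundle structure when $n=2p$, I would use that the equation $\sum_i x_iy_i=0$ is bilinear in the $x$- and $y$-coordinates. Fixing a representative $y$ of a point $[y]\in\PPpY$, the operations $[x:y]+[x':y]:=[x+x':y]$ and $c\cdot[x:y]:=[cx:y]$ are independent of the representative chosen and again satisfy the defining equation, so they equip each fibre with a canonical $F$-vector-space structure whose zero is $[0:y]=i_y([y])$. The chart trivialisations above are linear in their fibre coordinates, so they glue to a genuine vector bundle, of which $i_y$ is the zero section.

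The main point --- and the part requiring the most care --- will be to see why this linear structure fails for odd $n=2p+1$, so that there $\rho$ is only an affine space bundle. The obstruction is the term $z^2$, which is not bilinear: over a fixed $[y]$ the naive sum $[x+x':y:z+z']$ fails \eqref{eq:Q-split} by the cross term $2zz'$, so no fibrewise addition exists. Equivalently, the coordinate $x_j$ eliminated over $U_j$ acquires a summand quadratic in the fibre coordinate $z$, so that the transition functions between the charts $U_j$ are genuinely non-linear; $\rho$ is thus an affine space bundle admitting $i_y$ as a section but not as a zero section. The only remaining work is the bookkeeping of which coordinate is eliminated in each chart and the verification of the resulting (non-)linearity of the transition functions, which is routine.
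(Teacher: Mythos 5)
Your proof is correct and follows essentially the same route as the paper's: over each chart \(\{y_j\neq 0\}\) of \(\PPpY\) the defining equation is solved for \(x_j\) (it is linear in \(x_j\) with unit coefficient \(y_j\) there), yielding an explicit trivialisation with the remaining coordinates as fibre coordinates; the paper writes this out only for odd \(n\) and \(j=0\), declaring the even case similar, whereas you also spell out the fibrewise linear structure \([x:y]+[x':y]=[x+x':y]\) in the even case, which is a worthwhile addition. Two small remarks. First, your trivialisation gives fibres \(\AA^{p}\) when \(n=2p\) and \(\AA^{p+1}\) when \(n=2p+1\) (consistent with \(\dim Q_n-p\)), so the uniform ``rank \(p+1\)'' in the statement is only accurate in the odd case; this is an issue with the statement, not with your argument. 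Second, the part you single out as ``requiring the most care'' --- that \(\rho\) is \emph{not} a vector bundle when \(n\) is odd --- is not asserted by the lemma and need not be proved, which is just as well: showing that the naive addition fails because of the cross term \(2zz'\), or that one particular system of trivialisations has non-linear transition functions, does not by itself rule out the existence of some other fibrewise linear structure.
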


\begin{proof}
  This seems to be well-known (see \cite[Section~6]{Ne}).  As we are not aware of a suitable reference, we provide here a proof for the case of odd \(n\) for the reader's convenience. The case of even \(n\) is similar.  Let \(U\cong \AA^p\) be the open subscheme of \(\PPpY\) defined by \(y_i \neq  0\). We will check that 
  \(
  \rho^{-1}(U)\cong U \times \AA^{p+1}
  \).
  We may assume that \(S = \Spec(F)\), as all schemes are already defined over~\(F\).  Let us moreover assume for ease of notation that \(i = 0\).   Then \(\rho^{-1}(U)\) is an affine quadric in \(\AA^{2p+2} \cong \PP^{2p+2} - V(y_0)\), defined in terms of the coordinates \([ x_0: \ldots : x_p : 1:  y_1 : \ldots : y_p:z]\) by the equation \( x_0 + x_1 y_1 + \cdots + x_p y_p + z^2 = 0\).  This affine quadric  is isomorphic to \(U\times \AA^{p+1}\) via an isomorphism of coordinate rings as follows:
  \begin{align*}
    \frac{F[x_0, x_1, \ldots , x_p, y_1, \ldots , y_p , z] }{ (x_0 + x_1 y_1 + \cdots + x_p y_p + z^2) } &\rightarrow F[ x_1, \ldots , x_p, y_1, \ldots , y_p , z] \\
    x_0 & \mapsto -(x_1 y_1 + \cdots + x_p y_p + z^2)\\
    x_i & \mapsto x_i \quad \text{ for } i\neq 0\\
    y_i & \mapsto y_i \quad \text{ for all } i
  \end{align*}
  This isomorphism is clearly compatible with the projections to~\(U\).
\end{proof}

\begin{lemma}\label{lem:picquadrics}
  Let \(n \geq 3\). Let \({\iota}_w\colon  \PP^p_w \to Q_n\) denote one of the closed embeddings \(\iota_x\) or \(\iota_y\) (or \(\iota_{x'}\) or \(\iota_{y'}\) if \(n\) is even).  This morphism induces an isomorphism \(\iota^*_w\colon \Pic(Q_n) \stackrel{\cong}{\to}\Pic(\PP_w^p)\) under which \(\iota_w^*\Oo_Q(k) \cong \Oo_{\PP^p_w}(k)\).  In particular, each of the mentioned embeddings induces the \emph{same} isomorphism on Picard groups.
\end{lemma}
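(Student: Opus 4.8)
The plan is to show that $\Pic(Q_n)$ is freely generated over $\Pic(S)$ by the class of $\Oo_Q(1)$, and that each restriction $\iota_w^*$ carries this generator to $\Oo_{\PP^p_w}(1)$; the isomorphism assertion and the formula $\iota_w^*\Oo_Q(k)\cong\Oo_{\PP^p_w}(k)$ then follow. The two geometric inputs are codimension-two excision for the Picard group of a regular scheme and the homotopy invariance of $\Pic$ along affine space bundles, the latter supplied by Lemma~\ref{lem:rho-is-vb}.

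First I would treat $\iota_y$, factoring it through Figure~\ref{fig:geometry} as $\iota_y = j \circ i_y$. Note that $Q_n$ is regular, being smooth over the regular scheme $S$, and integral since it is smooth over connected $S$ with integral fibres. For $n \ge 3$ the linear subspace $\PPpX$ has codimension at least two in $Q_n$ — namely $n-p = p \ge 2$ when $n = 2p \ge 4$, and $n-p = p+1 \ge 2$ when $n = 2p+1 \ge 3$ — so no Weil divisor is supported on $\PPpX$ and the restriction $j^*\colon \Pic(Q_n) \xrightarrow{\cong} \Pic(Q_n - \PPpX)$ is an isomorphism (a line bundle on the complement extends uniquely across a codimension-$\ge 2$ locus). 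By Lemma~\ref{lem:rho-is-vb} the morphism $\rho$ is an affine space bundle with section $i_y$, so $\rho^*\colon \Pic(\PPpY) \xrightarrow{\cong} \Pic(Q_n - \PPpX)$ is an isomorphism whose inverse is $i_y^*$ (using $\rho \circ i_y = \id$). Composing, $\iota_y^* = i_y^* \circ j^*$ is an isomorphism.

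To identify the images of $\Oo_Q(k)$, I would recall that $\Oo_Q(1) = i^*\Oo(1)$ along $i\colon Q_n \hookrightarrow \PP^{n+1}$ and that each $\PP^p_w$ is a \emph{linear} subspace of $\PP^{n+1}$; restricting $\Oo(1)$ to such a subspace gives $\Oo_{\PP^p_w}(1)$, whence $\iota_w^*\Oo_Q(k) \cong \Oo_{\PP^p_w}(k)$ for all $k$. For the remaining embeddings I would invoke the evident symmetries of the defining equation~\eqref{eq:Q-split}: the swaps $x_i \leftrightarrow y_i$ (and, in the even case, the coordinate transpositions relating the primed and unprimed subspaces) are linear automorphisms of $\PP^{n+1}$ that preserve $Q_n$ and $\Oo(1)$ and carry $\PPpX$, $\PP^p_{x'}$ and $\PP^p_{y'}$ to $\PPpY$. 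Since such automorphisms induce isomorphisms on Picard groups compatible with restriction, the case of $\iota_y$ already settled yields the statement for $\iota_x$, $\iota_{x'}$ and $\iota_{y'}$ as well.

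Finally, for the claim that all the embeddings induce the \emph{same} isomorphism, I would use the standard description $\Pic(\PP^p_w) \cong \Pic(S) \oplus \ZZ\cdot\Oo_{\PP^p_w}(1)$, together with the identification $\Pic(Q_n) \cong \Pic(S) \oplus \ZZ\cdot\Oo_Q(1)$ that drops out of the computation above. Each $\iota_w^*$ is compatible with pullback from $S$ and sends $\Oo_Q(1)$ to $\Oo_{\PP^p_w}(1)$; since these data determine a homomorphism out of $\Pic(Q_n)$, all the maps coincide under the canonical identifications. The only genuinely essential hypothesis throughout is $n \ge 3$: this is exactly what forces $\operatorname{codim}(\PPpX) \ge 2$ and hence makes $j^*$ an isomorphism rather than a mere surjection. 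I expect the verification of this codimension bound, and the bookkeeping of the symmetries that reduce the primed subspaces to $\PPpY$, to be the only points requiring any care.
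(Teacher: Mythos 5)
Your proof is correct and follows essentially the same route as the paper: factor $\iota_y$ as $j\circ i_y$, use that $\PPpX$ has codimension $\geq 2$ (this is where $n\geq 3$ enters) to see $j^*$ is an isomorphism, use homotopy invariance along the affine space bundle $\rho$ for $i_y^*$, and identify $\iota_w^*\Oo_Q(1)$ via the linearity of the composite embedding into $\PP^{n+1}$. The only cosmetic differences are that the paper phrases the codimension step via the localization sequence for $\CH^1\cong\Pic$ rather than divisor-class excision, and dismisses the other embeddings as "analogous" where you spell out the coordinate symmetries.
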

\begin{proof}
  We only deal with \(\PPpY\) here, as the other cases are analogous. Recall that \(\Pic(X)\cong \CH^{1}(X)\) when \(X\) is smooth over a field.
  Consider the pullback map induced by the morphism along the top row of Figure~\ref{fig:geometry}:
  \[
    \ZZ \oplus \Pic(S) \cong \CH^1(\PPpY) \xleftarrow{i_y^*} \CH^1(Q_n-\PPpX) \xleftarrow{j^*} \CH^1(Q_n) \xleftarrow{i^*} \CH^1(\PP^{n+1}) \cong \ZZ \oplus \Pic(S)
  \]
  Using Lemma~\ref{lem:rho-is-vb} and the homotopy invariance of Chow groups, we see that \(i_y^*\)  is an isomorphism. Using the assumption \(n \geq 3\) and the localization sequence of Chow groups, we see that \(j^*\) is an isomorphism. As the composition \(i\circ j \circ i_y\) is a linear embedding of \(\PPpY\) into \(\PP^{n+1}\), the composition of all these pullback maps is likewise an isomorphism, sending \(\Oo_{\PP^{n+1}}(1)\) to \(\Oo_{\PPpY}(1)\).  
  The claim follows.
\end{proof}

The closed immersion \(\iota\colon \PP^{p}_{x} \hookrightarrow Q_n\) is a regular immersion of codimension \(p\) (if \(n=2p\)) or \(p+1\) (if \(n=2p+1\)), since  \(\PP^{p}_{x}\) and \(Q_n\) are both smooth (cf.\  \cite[Ch.\,4, 17.12.1]{EGA4}). Let \(Bl\) denote the blow-up of \(Q_n\) along \(\PP^{p}_{x}\) and let \(E\) denote the exceptional fibre. The following proposition shows that this setup satisfies \cite[Hypothesis 1.2]{BC}.

\begin{proposition}\label{pblup}
  There exists a morphism  \(\tilde{\rho}\colon  Bl \rightarrow \PP^{p}_{y}\) making the following diagram commutative:
  \begin{equation}\label{bc}
    \begin{aligned}
      \xymatrix{
        \PP^{p}_{x} \ar[r]^-{\iota} 
        &Q_n   & \ar@{_{(}->}[l]^-{j} U := Q_n - \PP^{p}_{x}  \cong Bl - E \ar@{.>}[d]^-\rho   \ar[dl]_-{\tilde{j}}   \\
        E \ar[u]^-{\tilde{\pi}}  \ar@{^{(}->}[r]^-{\tilde{\iota}} &Bl \ar@{.>}[r]^-{\tilde{\rho}} \ar[u]^-{\pi} & \PP^{p}_{y}  }
    \end{aligned}
  \end{equation}
  Here, \(\iota\) and \(\tilde{\iota}\) are closed immersions, \(\pi\) and \(\tilde{\pi}\) are projections, \(j\) and \(\tilde{j}\) are open immersions, and \(\rho\) is the affine space bundle from Lemma~\ref{lem:rho-is-vb}.
\end{proposition}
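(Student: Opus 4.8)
The plan is to construct \(\tilde\rho\) as the resolution of the linear projection that already defines \(\rho\), realising \(Bl\) as a strict transform inside a blow-up of the ambient projective space. As in the proof of Lemma~\ref{lem:rho-is-vb}, I may assume \(S = \Spec(F)\): all the schemes and morphisms in \eqref{bc} are defined over \(F\), and both the formation of blow-ups and the commutativity of the diagram are preserved by the smooth base change \(S \to \Spec(F)\).

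The first step is to recognise \(\rho\) for what it is. The formulas in Section~\ref{sec:notation} show that \(\rho\) is the restriction to \(U = Q_n - \PPpX\) of the linear projection
\[
  \mathrm{pr}\colon \PP^{n+1} \dashrightarrow \PPpY, \qquad [x:y(:z)] \mapsto [y_0:\cdots:y_p],
\]
with centre the linear subspace \(L := V(y_0,\dots,y_p) \subset \PP^{n+1}\). Blowing up this reduced linear centre resolves \(\mathrm{pr}\): the blow-up \(\beta\colon \widetilde{\PP^{n+1}} \to \PP^{n+1}\) of \(\PP^{n+1}\) along \(L\) carries a morphism \(\Psi\colon \widetilde{\PP^{n+1}} \to \PPpY\) restricting to \(\mathrm{pr}\) away from \(L\) (in fact \(\widetilde{\PP^{n+1}}\) is a projective bundle over \(\PPpY\)). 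I would then obtain \(\tilde\rho\) by restricting \(\Psi\) to \(Bl\), once \(Bl\) has been identified with the strict transform of \(Q_n\) in \(\widetilde{\PP^{n+1}}\).

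When \(n = 2p\) this is straightforward, because \(L = \PPpX\) is contained in \(Q_n\). Hence the scheme-theoretic base locus of \(\mathrm{pr}|_{Q_n}\) is exactly the reduced subvariety \(\PPpX\), the strict transform of \(Q_n\) agrees with the blow-up \(\pi\colon Bl = Bl_{\PPpX}Q_n \to Q_n\), and we get a closed immersion \(Bl \hookrightarrow \widetilde{\PP^{n+1}}\) commuting with the projections down to \(\PP^{n+1}\); setting \(\tilde\rho := \Psi|_{Bl}\) then does the job. The odd case is the real obstacle. When \(n = 2p+1\) the centre \(L = V(y_0,\dots,y_p)\) is a \(\PP^{p+1}\) which is \emph{not} contained in \(Q_n\): the quadratic form restricts on \(L\) to \(z^2\), so \(L \cap Q_n = V(y_0,\dots,y_p,z^2)\) is non-reduced with reduction \(\PPpX\). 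Equivalently, the base ideal \((y_0,\dots,y_p)\Oo_{Q_n}\) of \(\mathrm{pr}|_{Q_n}\) is strictly smaller than \(\mathcal I_{\PPpX}\) (it does not contain \(z\)), so it is not automatic that its pullback to \(Bl = Bl_{\PPpX}Q_n\) is invertible — which is exactly what a morphism \(\tilde\rho\) extending \(\rho\) would require. Reconciling the reduced blow-up centre with the resolution of \(\mathrm{pr}\) is where the work lies.

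I expect to handle the odd case by an explicit local computation in the affine charts of \(Q_n\) from the proof of Lemma~\ref{lem:rho-is-vb}. Over the chart \(\{y_i \neq 0\}\) of \(\PPpY\) one blows up \(\{y_1=\dots=y_p=z=0\}\) by hand, pulls back the sections \(y_0,\dots,y_p\) of \(\Oo_Q(1)\), divides out the local equation of \(E\), and inspects the resulting linear system. The delicate point is its behaviour along the normal directions to \(\PPpX\) coming from the \(z\)-coordinate: one must check whether the reduced blow-up already makes the system base-point-free there, or whether one is forced to pass to the strict transform (the blow-up of the non-reduced base scheme). Once \(\tilde\rho\) is in hand, the diagram \eqref{bc} is finished off by routine identities: \(\pi\circ\tilde j = j\) and \(\pi\circ\tilde\iota = \iota\circ\tilde\pi\) hold by the defining properties of the blow-up (with \(\tilde j\) the canonical open immersion \(Bl - E \cong U\) and \(\tilde\pi = \pi|_E\)), while the triangle \(\tilde\rho\circ\tilde j = \rho\) holds by construction, both sides being \(\mathrm{pr}|_U\). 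Since \(U\) is dense in \(Bl\) and \(\PPpY\) is separated, such a \(\tilde\rho\) is moreover unique.
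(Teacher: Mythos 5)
Your construction for \(n=2p\) is correct and is essentially the paper's own argument in different packaging: the paper realises \(Bl\) as the closure in \(Q_n\times\PP^p\) of the graph of \([x:y]\mapsto[y_0:\dots:y_p]\) and takes \(\tilde\rho\) to be the second projection, which amounts to your ``strict transform inside the blow-up of \(\PP^{n+1}\) along \(L\)'' picture, since for even \(n\) the centre \(L=V(y_0,\dots,y_p)\) \emph{is} \(\PPpX\) and the base ideal \((y_0,\dots,y_p)\Oo_{Q_n}\) is the reduced ideal of \(\PPpX\) (on the chart \(x_0\neq 0\) the quadric equation gives \(y_0=-\sum_{i\geq1}x_iy_i\), so the ideal equals \((y_1,\dots,y_p)\)). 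The routine identities you list at the end are also fine.

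The gap is exactly where you stop: the odd case is announced as ``an explicit local computation'' but never carried out, and if you carry it out it does not come out the way you need. On the chart \(x_0\neq 0\) of \(Q_{2p+1}\) one has \(y_0=-(\sum_{i\geq1}x_iy_i+z^2)\), affine coordinates \((x_1,\dots,x_p,y_1,\dots,y_p,z)\), \(\PPpX=V(y_1,\dots,y_p,z)\), and base ideal \((y_0,\dots,y_p)=(y_1,\dots,y_p,z^2)\). In the chart of \(Bl\) where \(z\) generates the exceptional ideal (so \(y_i=zu_i\)), this ideal pulls back to \(z\cdot(u_1,\dots,u_p,z)\), which is \emph{not} invertible along \(\{u=z=0\}\); concretely \(\rho\circ\pi=[-(\sum_i x_iu_i+z):u_1:\dots:u_p]\) has limit \([1:0:\dots:0]\) along \(u=0\), \(z=t\), but limit \([-x_1:1:0:\dots:0]\) along \(u_1=t\), \(z=0\). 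Equivalently, on \(E=\PP(\sheaf N_{\PPpX})\) the map would have to send a normal direction \([\dot y:\dot z]\) at a point of \(\PPpX\) to \([\dot y]\in\PPpY\), and this is undefined on the section \([\dot y:\dot z]=[0:1]\) of \(E\to\PPpX\) (the \(z\)-direction is tangent to \(Q_n\) along \(\PPpX\) because \(d(\sum x_iy_i+z^2)\) has no \(dz\)-component there, so it genuinely occurs in \(E\)). Since, as you note, \(\tilde j\) has dense image and \(\PPpY\) is separated, any \(\tilde\rho\) making \eqref{bc} commute would be the unique extension of \(\rho\) across \(E\); the computation above shows this extension does not exist, so the odd case cannot be completed along these lines, and replacing \(Bl\) by the strict transform (the blow-up of the non-reduced base scheme) is not an option because it changes \(Bl\) and produces a singular scheme. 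Be aware that the paper's own proof likewise only treats \(n=2p\) and waves at the odd case, and that the proposition is only ever invoked for even \(n\) (Case~II of Theorem~\ref{thm:keyexactsequence} and Proposition~\ref{prop:lambda}); if you want a complete argument you should either restrict the statement to \(n=2p\) or explain how the odd case is to be salvaged, which the present outline does not do.
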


\begin{proof}
  We concentrate on the case \(n=2p\); the case of odd \(n\) is similar. Then \(Bl\) is contained in (in fact equal to) the closed subscheme of 
  \[
    Q_n \times \PP^p = \Proj(\Oo_S[x_0,\ldots,x_p,y_0,\ldots,y_p]/(x_0y_0+ \dots +x_py_p)) \times \Proj(\Oo_S[T_{0},\dots,T_{p}])
  \] 
  defined by the homogeneous polynomials  \(y_{k}T_{j} - y_{j}T_{k}\) (for \(k,j = 0,1,\dots,p\)) and \(\sum_{i=0}^{p} x_i T_i = 0\).  This follows from the universal property of a blow-up, or from the geometric description of the blow-up along \(\PPpX\) as the closure of the graph of the rational map \(Q_n \to \PP^d, [x_0:\ldots:x_p:y_0:\ldots:y_p]\mapsto [y_0:\ldots:y_p]\), see \cite[paragraph above Exercise~7.19]{harris}.  Define \(\tilde{\rho}\) to be the composition \(Bl \hookrightarrow Q_n \times \PP^{p}\rightarrow \PP^{p}_{y}\), where the first map is the inclusion and the last map is the projection onto the  second factor.  The relations \(y_{k}T_{j} = y_{j}T_{k}\) guarantee the commutativity of diagram~\eqref{bc}.  
\end{proof}   

Diagram~\eqref{bc} descends to the following diagram of Picard groups:
\begin{equation}\label{pic:iso}
  \begin{aligned}
    \xymatrix{
      &\Pic(Q_n) \ar[d]^{\pi^{*}}_{\binom{1}{0}}  \ar[r]^{j^{*} \simeq}& \Pic(U)\ar[d]^{(\rho^{*})^{-1}\simeq} \\
      &	\Pic(Q_n)\oplus \mathbb{Z}[E] \cong  \Pic(Bl) \ar[ru]^-{\tilde{j}^*}& \ar[l]^-{\tilde{\rho}^{*}}_-{\binom{1}{\lambda}} \Pic(\PP^{p}_{y})
    }
  \end{aligned}
\end{equation}
The map \(\lambda\colon\Pic(\PP^p_y) \rightarrow \ZZ[E]\) does not vanish in general. Hence, the square does \emph{not} generally commute (cf.\  \cite[Remark 2.2]{BC}), but both triangles in this diagram commute.  For even \(n\), we now compute the value of \(\lambda(\Oo(1))\) as \(-1\):

\begin{proposition}\label{prop:lambda}
  Assume \(n=2p\) with \(p \geq 2\). The pullback homomorphism  \(\tilde{\rho} = \binom{1}{\lambda}\colon \Pic(\PP^p_y)  \rightarrow \Pic(Q_n) \oplus \mathbb{Z}[E]\) sends \(\Oo_{\PP^p}(1)\) to \((\Oo_Q(1) , -1)\).
\end{proposition}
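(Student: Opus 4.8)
The plan is to reduce the statement to computing the single integer \(\lambda(\Oo_{\PP^p}(1))\), the coefficient of \([E]\). The first component is already pinned down: restricting the identity \(\tilde j^*\circ\tilde\rho^* = \rho^*\) coming from diagram~\eqref{bc} along the open immersion \(\tilde j\colon U\hookrightarrow Bl\) (which annihilates \([E]\)) and invoking Lemma~\ref{lem:picquadrics}, one sees that the \(\Pic(Q_n)\)-component of \(\tilde\rho^*\Oo_{\PP^p}(1)\) is \(\Oo_Q(1)\). So in the decomposition \(\Pic(Bl)\cong\pi^*\Pic(Q_n)\oplus\ZZ[E]\) we may write
\[
  \tilde\rho^*\Oo_{\PP^p}(1) = \pi^*\Oo_Q(1) + \lambda\,[E],
\]
and it remains to show \(\lambda = -1\). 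Since all schemes, morphisms and line bundles in sight are base-changed from \(F\), and the coefficient of \([E]\) is unaffected by the \(\Pic(S)\)-summand of \(\Pic(Q_n)\), I may assume \(S = \Spec(F)\).

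To extract \(\lambda\), I would intersect the displayed relation with the strict transform \(\tilde\ell\subset Bl\) of a well-chosen line in \(Q_n\). Concretely, take \(\ell\) to be the line \([s:0:\cdots:0\mid 0:t:0:\cdots:0]\) with \(x_0=s\), \(y_1=t\) and all other coordinates zero. One checks at once that \(\ell\subset Q_n\), that \(\ell\) meets the centre \(\PPpX=\{y_0=\cdots=y_p=0\}\) in the single reduced point \(P=[1:0:\cdots:0]\) (the locus \(t=0\)), and that \(\ell\not\subset\PPpX\). The three intersection numbers are then: firstly \(\pi^*\Oo_Q(1)\cdot\tilde\ell = \Oo_Q(1)\cdot\ell = 1\) by the projection formula, as \(\ell\) is a line in \(\PP^{n+1}\); secondly \([E]\cdot\tilde\ell = 1\), since \(\ell\) meets the smooth centre transversally in one reduced point and so its strict transform meets \(E\) in one point of multiplicity one; thirdly \(\tilde\rho^*\Oo_{\PP^p}(1)\cdot\tilde\ell = 0\), because on \(U\) the morphism \(\rho\) sends every point of \(\ell\setminus P\) to the fixed point \([0:1:0:\cdots:0]\in\PPpY\), so \(\tilde\rho\) collapses \(\tilde\ell\) to a point and the pulled-back bundle has degree zero on it. Substituting these into the relation gives \(0 = 1 + \lambda\), whence \(\lambda = -1\).

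The main obstacle is the middle intersection number \([E]\cdot\tilde\ell = 1\), i.e.\ the transversality of \(\ell\) and the centre \emph{inside} \(Q_n\). I would verify this at \(P\) by a tangent-space computation in the affine chart \(x_0=1\): there the differential of \(\sum x_iy_i\) at \(P\) is \(dy_0\), so \(T_PQ_n=\ker(dy_0)=\langle\partial_{x_1},\dots,\partial_{x_p},\partial_{y_1},\dots,\partial_{y_p}\rangle\), while \(T_P\PPpX=\langle\partial_{x_1},\dots,\partial_{x_p}\rangle\). The tangent direction of \(\ell\) at \(P\) is \(\partial_{y_1}\), which lies in \(T_PQ_n\) but not in \(T_P\PPpX\); hence \(\ell\) is smooth at \(P\) and transverse to the centre, and the standard local model of the blow-up yields that \(\tilde\ell\) meets \(E\) exactly once, transversally. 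Everything else is routine, so this transversality check is the only genuinely geometric input.
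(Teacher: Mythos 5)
Your proof is correct, but it computes \(\lambda\) by a genuinely different method than the paper. Both arguments begin the same way: using \(\tilde j^*\tilde\rho^* = \rho^*\), the triangle identities and Lemma~\ref{lem:picquadrics} to see that \(\tilde\rho^*\Oo_{\PP^p}(1) - \pi^*\Oo_Q(1)\) lies in \(\ker\tilde j^* = \ZZ[E]\). From there the paper stays with divisors on \(Bl\): it observes that \(V(y_0) = V(T_0)\cup E\) as sets, invokes Fulton's order-of-vanishing formalism to write \([y_0] = [T_0] + \ell[E]\), and computes \(\ell = 1\) by showing that \(\Oo_{Bl,E}\) is a discrete valuation ring with uniformizer \(y_0\), so that \(\Oo_{V(y_0),E}\) is a field. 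You instead pair the relation \(\tilde\rho^*\Oo_{\PP^p}(1) = \pi^*\Oo_Q(1) + \lambda[E]\) against the strict transform of an explicit line \(\ell\subset Q_n\) meeting the centre \(\PPpX\) in a single reduced point and contracted by \(\tilde\rho\); the three degrees \(0\), \(1\), \(1\) then force \(\lambda=-1\). Your route trades the paper's local-ring computation at the generic point of \(E\) for a tangent-space/transversality check plus the standard fact that the strict transform of a smooth curve meeting the centre in a reduced point meets \(E\) with multiplicity one; it does require the preliminary reduction to \(S=\Spec(F)\) so that degrees of line bundles on the complete curve \(\tilde\ell\) are defined, which you supply, and which is harmless since all the relevant data are base-changed from \(F\). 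All the individual verifications (that \(\ell\subset Q_n\), that \(\ell\cap\PPpX\) is the reduced point \(P\), that \(\rho\) contracts \(\ell\setminus P\) to \([0:1:0:\cdots:0]\), and the tangent computation in the chart \(x_0=1\)) check out.
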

\begin{proof} 
  All schemes in sight are smooth over a field, so we can identify all Picard groups with codimension one Chow groups as above. The dimensions of our schemes are \(\dim(Bl) = \dim (Q_n) = 2p\) and \(\dim(E) = 2p - 1\).  The identification of \(\CH^1(Bl)\) with \(\CH^1(Q_n)\oplus \ZZ[E]\) is given by \(\pi^*\colon \CH^1(Q_n)\to \CH^1(Bl)\) and by \(\tilde{\iota}_*\colon \CH^0(E) = \ZZ[E]\to \CH^1(Bl)\).  
  This identification is obtained by noting that the usual localization sequence of Chow groups is split exact:
  \begin{equation}\label{eq:ch}
    0 \to \CH^{0}(E)  \xrightarrow{\tilde{\iota}_{*}} \CH^{1}(Bl) \xrightarrow{\tilde{j}^{*}}  \CH^{1}(U) \to 0
  \end{equation}
  To see this, recall that for smooth schemes this exact sequence fits into a long exact sequence of motivic cohomology (cf.\  \cite[\S\,3]{DI:Hopf}):
  \[ 
    \cdots \to \H^{1,1}(Bl) \xrightarrow{\tilde{j}^{*}}   \H^{1,1}(U) \xrightarrow{\partial} \CH^{0}(E) \xrightarrow{\tilde{\iota}_{*}} \CH^{1}(Bl) \xrightarrow{\tilde{j}^{*}}  \CH^{1}(U) \rightarrow 0    
  \]
  The sequences breaks down into short split exact sequences because \(\tilde{j}^*\) is split surjective via \(\tilde{\rho}^{*} \circ (\rho^{*})^{-1}\) for all degrees.  The map \(\tilde{j}^{*}\colon  \CH^{1}(Bl) \rightarrow  \CH^{1}(U)\) in degree one can also be split by \( \pi^{*} \circ (j^{*})^{-1}\).
  
  Next, we describe the maps in diagram~\eqref{pic:iso} explicitly in terms of generators.  Let us write \([y_0]\in \CH^1(Q_n)\) for the cycle on \(Q_n\) corresponding to the subscheme defined by \(y_0 = 0\), and analogously for cycles on other schemes.  In this notation, each of the groups \(\CH^1(\PP^p_y)\), \(\CH^1(U)\) and \(\CH^1(Q_n)\) is generated by the cycle \([y_0]\) corresponding to the line bundle \(\Oo(1)\) in each Picard group.  The pullback maps are determined by  \(\rho^*[y_0] = [y_0] = j^*[y_0]\), \(\tilde{\rho}^*[y_0] =  [T_0]\) and \(\pi^*[y_0] = [y_0]\).  By the commutativity of both triangles in diagram~\eqref{pic:iso}, we see \(\tilde{j}^*[T_0] = [y_0] =  \tilde{j}^*[y_0]\) in \(\CH^1(U)\).  So the exact sequence \ref{eq:ch} shows that 
  \begin{equation}\label{eq:prop:lambda:1}
    [T_0] - [y_0] = \lambda [E]
  \end{equation}
  in \(\CH^1(Bl)\) for some integer \(\lambda\). This is the integer that we need to compute. 
  
  To compute \(\lambda\), first note that for the closed subschemes \(V(y_0)\) and \(V(T_0)\) of \(Bl\) we have an equality of sets \(V(y_0) = V(T_0) \cup E\) with neither of \(E\) or \(V(T_0)\) contained in one another.   Here, the exceptional divisor \(E\) is the smooth subscheme defined by \(\{( [x_0:\ldots:x_p],[T_{0}:\ldots:T_{p}]) \in\PP^{p} \times \PP^{p}: \sum x_iT_{i} = 0 \}\). In particular, \(E\) is integral, hence an irreducible component of \(V(y_0)\).   Using \cite[Section~1.5]{Fu}, we conclude 
  \begin{equation}\label{eq:prop:lambda:2}
    [y_0] = [T_0] + \ell [E],
  \end{equation}
  where \(\ell\) is the length of \(O_{V(y_0),E}\) as a module over itself.  Now note that \(\Oo_{V(y_0),E}\) is a field: \(\Oo_{Bl,E}\) is a discrete valuation ring with maximal ideal given by \((y_0, y_1, \cdots, y_p)\); as \(y_i = y_0 \frac{T_p}{T_0}\), this ideal is principal, generated by \(y_0\).  The relation \(y_0 =0\) in \(\Oo_{V(y_0),E}\) kills this maximal ideal of \(\Oo_{Bl,E}\), so \(\Oo_{V(y_0),E}\) is a field as claimed.  It follows that \(\ell = 1\).

  By comparing \eqref{eq:prop:lambda:1} and \eqref{eq:prop:lambda:2}, we conclude that \(\lambda = -1\).
\end{proof}

\section{$\I$-cohomology: additive structure}\label{additivecomp}

We now embark on our computations of $\I$-cohomology of split quadrics, keeping the notation established in Section~\ref{sec:notation} and in the previous section. For the definition and basic properties of $\I$- and \(\Ib\)-cohomology, we refer to \cite{faselthesis}, \cite{MField}, \cite{AF} and the survey lectures of Jean Fasel in these Proceedings \cite{fasel:this-volume}.  In particular, these groups may be described using a variant of the Gersten complex, in which the entries are powers of the fundamental ideal in the Witt ring of the appropriate residue fields.  We also refer to \cite{faselthesis} for twisted coefficients and how they appear when studying pushforwards, and to Lemmas~\ref{lem:normal-bundles-in-Q} and \ref{lem:picquadrics} for possible twists that appear for split quadrics. Finally, we note that similarly to Chow groups, \(\I\)-cohomology groups satisfy homotopy invariance for affine space bundles over smooth bases:

\begin{theorem}\label{thm:homotopy-invariance}
  Let \(f \colon X \to Y\) be an affine space bundle (recall Definition~\ref{def:affinebundle}) over a smooth variety \(Y\) over a base field \(F\) of characteristic \(\neq 2\).
  Then \(f^*\colon \H^i(Y, \I^j,  \LC) \rightarrow \H^i(X, \I^j, f^* \LC) \) is an isomorphism for any line bundle \(\LC\) on \(Y\). 
\end{theorem}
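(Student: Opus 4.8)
The plan is to reduce the general statement to two standard ingredients: the $\AA^1$-invariance of $\I$-cohomology with a fixed, pulled-back line bundle twist, and a Mayer--Vietoris descent argument that globalises the trivial case over a Zariski-trivialising cover. Since an affine space bundle is, by Definition~\ref{def:affinebundle}, only \emph{Zariski locally} trivial---with transition functions that may be arbitrary automorphisms of $\AA^r$ over the base---the argument is deliberately set up so as never to use the shape of these transition functions, but only local triviality. In particular, unlike the situation of Lemma~\ref{lem:rho-is-vb}, we make no use of any vector bundle or torsor structure.

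First I would treat the \textbf{trivial case} $f\colon Y \times \AA^r \to Y$. Writing $Y \times \AA^r = (Y \times \AA^{r-1}) \times \AA^1$ and inducting on $r$, this reduces to the rank-one statement that the projection $\mathrm{pr}\colon Y \times \AA^1 \to Y$ induces an isomorphism
\[
  \mathrm{pr}^*\colon \H^i(Y, \I^j, \LC) \xrightarrow{\ \cong\ } \H^i(Y\times\AA^1, \I^j, \mathrm{pr}^*\LC)
\]
for every line bundle $\LC$ on $Y$. This is the foundational homotopy invariance property of $\I$-cohomology with twisted coefficients, which follows from the strict $\AA^1$-invariance of the sheaves $\I^j$ and the associated Rost--Schmid/Gersten complexes (see \cite{faselthesis}). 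Note that the twist here is always a pullback from the base, so no knowledge of the line bundles on the total space is required.

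Next I would \textbf{globalise} by induction on the number of opens in a trivialising cover. Concretely, I claim that $f^*$ is an isomorphism whenever $Y$ admits a cover by $N$ opens over which $f$ is trivial, and I proceed by induction on $N$; the case $N=1$ is the trivial case above. For the inductive step, write $Y = U \cup V$ with $U = U_1$ a single trivialising open and $V = U_2 \cup \cdots \cup U_N$ covered by $N-1$ trivialising opens. Then $U \cap V = \bigcup_{j\ge 2}(U_1 \cap U_j)$ is again covered by $N-1$ trivialising opens, since $f$ is trivial over each $U_1 \cap U_j \subseteq U_j$. Restricting $f$ over $U$, $V$ and $U\cap V$ and invoking the induction hypothesis, $f^*$ is an isomorphism over each of these three opens. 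The pullback $f^*$ now maps the Mayer--Vietoris sequence of the cover $Y = U \cup V$ (with twist $\LC$) to that of the cover $X = f^{-1}(U) \cup f^{-1}(V)$ (with twist $f^*\LC$); these sequences exist because $\I$-cohomology is the cohomology of a complex of Zariski sheaves. An application of the five lemma then yields that $f^*$ is an isomorphism over $Y$. As $Y$ is noetherian, a finite trivialising cover exists, so the induction terminates.

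The entire conceptual content is concentrated in the rank-one homotopy invariance statement, and this is the only step I expect to require genuine input; once it is in hand, the remainder is formal. The design of the Mayer--Vietoris induction is exactly what allows us to avoid any hypothesis on the structure group of the bundle, so that passing from the vector bundle case to the full strength of Definition~\ref{def:affinebundle} introduces no additional difficulty.
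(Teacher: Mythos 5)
Your argument is correct and follows essentially the same route as the paper: reduce to the trivial bundle over the members of a finite trivialising cover (where the statement is the twisted \(\AA^1\)-homotopy invariance of \(\I\)-cohomology, cited from \cite{faselthesis}), then patch by induction on the number of opens via Mayer--Vietoris and the five lemma. The only cosmetic differences are that the paper first refines the cover to one by open affines before invoking \cite[Corollaire~11.2.8]{faselthesis}, and quotes the \(\AA^r\) case directly rather than inducting on \(r\).
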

\begin{proof}
  Since \(Y\) is quasicompact, we may assume that the open cover in Definition~\ref{def:affinebundle} is a finite cover by open affines. The pullback along the restriction of \(f\) to any affine subset is an isomorphism by \cite[Corollaire~11.2.8]{faselthesis}.  Arguing by induction, we are reduced to the following commutative ladder diagram with \(U\) affine, in which the rows are exact Mayer-Vietoris sequences:
  \[ \xymatrix { \cdots  \ar[r] & \H^i(Y, \I^j, \LC) \ar[d]_-{f^*} \ar[r] & \H^i(U, \I^j, \LC|_U) \oplus \H^i(V, \I^j, \LC|_V)  \ar[d]^-{(f^*_U, f^*_V)} \ar[r] & \H^i(U \cap V, \I^j, \LC|_{U\cap V}) \ar[d]^-{f_{U\cap V}^*}\ar[r]& \cdots \\
      \cdots  \ar[r] & \H^i(X, \I^j, f^*\LC) \ar[r] & \H^i(X_U, \I^j, f^*(\LC|_U)) \oplus \H^i(X_V, \I^j, f^*(\LC|_V))  \ar[r] & \H^i(X_{U\cap V}, \I^j, f^*(\LC|_{U\cap V})) \ar[r]& \cdots}  \]
  The claim now follows by the five lemma. 

  Note that the required Mayer-Vietoris sequence can be 
  deduced from the localization sequence \cite[Th\'{e}or\`{e}me 9.3.4]{faselthesis} and  excision along open embeddings for \(\I\)-cohomology.  (More generally, we have excision along flat morphisms, cf.\  \cite[Lemma~3.7]{calmesfasel}.)
\end{proof}  
\begin{theorem}[Base change formula {\cite[Theorem~2.12]{AF}}]\label{thm:base-change-formula}
  Suppose that \(f\colon X \rightarrow Y\) is a regular codimension~\(c\) embedding of smooth schemes that fits into a cartesian diagram of smooth schemes as follows:
  \[\xymatrix{
      X' \ar[d]^-{g}\ar[r]^-{v} & X \ar[d]^-{f} \\
      Y'  \ar[r]^-{u} & Y
    }
  \]
  Suppose that the natural map on normal bundles \(\Omega\colon  \mathcal{N}_{X'}Y' \rightarrow v^*\sheaf N_{X}Y\) induced by \(u\) and \(v\) is an isomorphism. Then \(u^* \circ f_* =  g_* \circ  \det \Omega^\vee  \circ v^*\), i.e.\ the following square commutes for any line bundle \(\sheaf L\) over \(Y\):
  \[\xymatrix{
      \ar[d]_-{\det \Omega^\vee}^{\cong}
      \H^i(X',\I^j,v^*f^*\sheaf L\otimes v^*\det(\sheaf N_XY)^\vee)
      &
      \ar[l]_-{v^*}
      \H^i(X,\I^j,f^*\sheaf L\otimes \det(\sheaf N_XY)^\vee)
      \ar[dd]_{f_*}
      \\
      \H^i(X',\I^j,g^*u^*\sheaf L\otimes \det(\sheaf N_{X'}{Y'})^\vee)
      \ar[d]_{g_*}
      \\
      \H^{i+c}(Y',\I^{j+c},u^*\sheaf L)
      &
      \ar[l]_-{u^*}
      \H^{i+c}(Y,\I^{j+c},\sheaf L)
    }
  \]
\end{theorem}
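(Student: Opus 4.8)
The identity is quoted from \cite{AF}, so strictly speaking one may simply cite it; let me instead sketch how such a base change formula is established within the Rost--Schmid formalism for \(\I\)-cohomology. The plan is to reduce the assertion, via deformation to the normal cone, to the case in which \(f\) is the zero section of a vector bundle, and then to deduce that case from the naturality of the Thom isomorphism. Recall first how the pushforward \(f_*\) along a regular codimension~\(c\) embedding is constructed (cf.\ \cite{faselthesis}): deformation to the normal cone produces a specialization homomorphism relating the cohomology of \(Y\) to that of the total space of \(\mathcal{N}_XY\) with supports along \(X\), and \(f_*\) is the composite of this specialization with the (inverse) Thom isomorphism for \(\mathcal{N}_XY\). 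This is precisely why the twist by \(\det(\mathcal{N}_XY)^\vee\) appears in the source of \(f_*\).

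First I would set up the deformation spaces \(D_XY\) and \(D_{X'}Y'\), each flat over \(\AA^1\), with general fibre \(Y\) (resp.\ \(Y'\)) over \(\AA^1\setminus\{0\}\) and special fibre \(\mathcal{N}_XY\) (resp.\ \(\mathcal{N}_{X'}Y'\)) over \(0\). The Cartesian square together with \(u\) and \(v\) induces a morphism \(D_{X'}Y'\to D_XY\) over \(\AA^1\) which restricts to the given square away from \(0\) and, over \(0\), to the square of zero sections
\[
  \xymatrix{
    X' \ar[d]\ar[r]^-{v} & X \ar[d] \\
    \mathcal{N}_{X'}Y' \ar[r] & \mathcal{N}_XY
  }
\]
in which the bottom arrow is, thanks to the hypothesis that \(\Omega\colon \mathcal{N}_{X'}Y'\xrightarrow{\cong} v^*\mathcal{N}_XY\) is an isomorphism, the pullback of vector bundles along \(v\). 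Invoking homotopy invariance (Theorem~\ref{thm:homotopy-invariance}) for the projections off the two ends of the deformation spaces, together with the localization sequences along the special fibres, one checks that the two specialization maps are compatible with the pullbacks \(u^*\) and \(v^*\). This compatibility reduces the desired identity \(u^*\circ f_*=g_*\circ\det\Omega^\vee\circ v^*\) to the analogous identity for the two zero sections.

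It then remains to treat the zero section \(s\colon X\hookrightarrow E\) of a vector bundle \(\pi\colon E\to X\) and its pullback \(E'=v^*E\cong \mathcal{N}_{X'}Y'\). Here \(s_*\) is, up to the twist, the Thom isomorphism, and the required square commutes because the Thom class of \(E\) pulls back along \(\bar v\colon E'\to E\) to the Thom class of \(E'\); this is the naturality of Thom, equivalently of Euler, classes in \(\I\)-cohomology. The determinant factor \(\det\Omega^\vee\) is exactly the isomorphism of twisting line bundles \(\det(\mathcal{N}_{X'}Y')^\vee\cong v^*\det(\mathcal{N}_XY)^\vee\) induced by \(\Omega\), forced on us in order to match the coefficients on the two sides. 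I expect the main obstacle to lie not in this final naturality step but in the first reduction: establishing the functoriality of deformation to the normal cone and the compatibility of the specialization maps with the \emph{non-flat} pullbacks \(u^*,v^*\) in the twisted setting requires the full Rost--Schmid/Gersten machinery of \cite{faselthesis} (see also \cite{calmesfasel}), and careful bookkeeping of all determinant twists so that the stated coefficient line bundles agree on the nose. Alternatively, one could bypass the explicit reduction by representing \(\I\)-cohomology in the motivic stable homotopy category and deducing the formula from the compatibility of purity isomorphisms with base change, at the cost of invoking the full six-functor formalism.
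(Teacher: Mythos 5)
The paper offers no proof of this statement beyond the citation of \cite[Theorem~2.12]{AF}, so there is nothing internal to compare against; as you note, citing the result is all that is required here. Your sketch --- functoriality of deformation to the normal cone reducing to the square of zero sections (where the hypothesis that \(\Omega\) is an isomorphism identifies the special fibre of the deformed square with a vector bundle pullback), followed by naturality of the Thom isomorphism, with \(\det\Omega^\vee\) matching the twisting line bundles --- is a faithful outline of how the pushforward along a regular embedding is constructed in \cite{faselthesis} and of how the base change formula is established in the cited source, including an honest identification of where the real technical work (compatibility of the specialization maps with non-flat pullback in the twisted Rost--Schmid setting) lies.
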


\subsection{Fasel's computations for projective spaces}\label{sec:Faselproj}
Let  \(S\)  be a smooth scheme over a field \(F\) of characteristic \(\neq 2\). The following isomorphisms of graded abelian groups are proved by Fasel in \cite{fasel:ij}: see Corollary~5.8, Definition~5.9, and Theorems 9.1, 9.2 and 9.4 of loc.\ cit.
(Note that the sequence in 9.4 of loc.\ cit.\ splits as the bundle \(E\) Fasel considers is trivial in our case.) 
\begin{theorem}[Fasel]\label{thm:Faselproj}
  \[
    \H^i( \PP^p, \I^j, \Oo(l) ) \cong
    \left\{
      \begin{array}{ll}
        \big(\bigoplus\limits_{\substack{m \textnormal{ even}\\2\leq m \leq p}} \H^{i-m}(S, \Ib^{j-m}) \big) \oplus \H^{i}(S,\I^j)  & \textnormal{ if \(l\) is even and \(p\) is even} \\[18pt]
        \big(\bigoplus\limits_{\substack{m \textnormal{ even}\\ 2\leq m \leq p}} \H^{i-m}(S, \Ib^{j-m}) \big) \oplus \H^{i}(S,\I^j)	\oplus \H^{i-p}(S,\I^{j-p}) & \textnormal{ if \(l\) is even and \(p\) is odd} \\[18pt]
        \big(\bigoplus\limits_{\substack{m \textnormal{ odd}\\ 1\leq m \leq p}} \H^{i-m}(S, \Ib^{j-m}) \big) \oplus \H^{i-p}(S,\I^{j-p})   & \textnormal{ if \(l\) is odd and \(p\) is even} \\[18pt]
        \bigoplus\limits_{\substack{m \textnormal{ odd}\\ 1\leq m \leq p}} \H^{i-m}(S, \Ib^{j-m})   & \textnormal{ if \(l\) is odd and \(p\) is odd} \\
      \end{array} \right. \]
\end{theorem}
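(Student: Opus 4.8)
The plan is to induct on $p$, using the localization sequence for the hyperplane inclusion $\iota\colon \PP^{p-1}\hookrightarrow \PP^p$ with open affine complement $j\colon U = \AA^p \hookrightarrow \PP^p$, combined with the homotopy invariance of Theorem~\ref{thm:homotopy-invariance}. The normal bundle of the hyperplane is $\Oo(1)$, so $\det N^\vee \cong \Oo(-1)$, and the twisted localization sequence (identifying cohomology with supports by dévissage) reads
\[
  \cdots \to \H^{i-1}(\PP^{p-1}, \I^{j-1}, \Oo(l-1)) \xrightarrow{\iota_*} \H^i(\PP^p, \I^j, \Oo(l)) \xrightarrow{j^*} \H^i(U, \I^j, \Oo(l)) \xrightarrow{\partial} \H^i(\PP^{p-1}, \I^{j-1}, \Oo(l-1)) \to \cdots
\]
By Theorem~\ref{thm:homotopy-invariance} the projection $U\to S$ identifies the middle open term with $\H^i(S,\I^j)$, independently of the twist, and the base case $p=0$ is just $\H^i(S,\I^j)$.

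First I would record that $\I$-cohomology with a line-bundle twist $\LC$ depends only on the class of $\LC$ in $\Pic/2$; since $\Pic(\PP^p)/2 \cong \ZZII$ is generated by $\Oo(1)$, only two twists occur, and they alternate at each step through $\Oo(l) \rightsquigarrow \Oo(l-1)$. This alternation, together with the parity of $p$ controlling where the induction stops, is what produces the four cases of the statement.

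The crux is to identify the maps $\iota_*$ and $\partial$, i.e.\ the differentials of the complex (or spectral sequence) associated with the cellular filtration $\PP^0\subset \PP^1\subset \cdots \subset \PP^p$, whose terms are all copies of $\H^\bullet(S,\I^\bullet)$. By the base change formula (Theorem~\ref{thm:base-change-formula}) and naturality, these differentials are $\H^\bullet(S,\I^\bullet)$-linear and reduce to a universal computation independent of $S$; I expect them to be, up to sign, either zero or the map induced by the sheaf inclusion $\I^\bullet \hookrightarrow \I^{\bullet-1}$ (multiplication by the motivic Hopf element $\eta$), alternating with the parity exactly as the differentials $0,2,0,2,\dots$ of the topological cellular complex in Proposition~\ref{cellular-complex}. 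Feeding the nonzero ($\eta$-)differentials into the fundamental short exact sequence of sheaves $0\to \I^{j}\to \I^{j-1}\to \Ib^{j-1}\to 0$ and its long exact cohomology sequence then replaces each pair of $\I$-summands linked by $\eta$ with a single summand $\H^{i-m}(S,\Ib^{j-m})$, the index $m$ running over the even or the odd integers according to the parity of $l$. The two extremal cells $m=0$ and $m=p$, whose adjacent differentials vanish, contribute the surviving untwisted summands $\H^i(S,\I^j)$ and $\H^{i-p}(S,\I^{j-p})$ precisely in the cases listed.

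The hard part will be the exact determination of these differentials---deciding at each stage whether the composite $\partial\circ\iota_*$ vanishes or equals $\eta$-multiplication---and then resolving the resulting extension problems so that the long exact sequences degenerate into the asserted direct sums. As with the sign $d_+/d_- = (-1)^{k+q+1}$ computed in Proposition~\ref{cellular-complex}, the sign-and-parity bookkeeping is delicate, and here it is further complicated by the simultaneous tracking of the line-bundle twists and of the degree shifts in the coefficient sheaves $\I^\bullet$ and $\Ib^\bullet$. Finally, I would check that the surviving extensions split, which reduces to freeness of the relevant subquotients over the coefficient groups; this is the analogue of the splitting remarked upon in connection with Theorem~9.4 of \cite{fasel:ij}.
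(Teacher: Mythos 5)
The paper does not prove this statement at all: it is quoted from Fasel, with the proof being the citation to Corollary~5.8, Definition~5.9 and Theorems~9.1, 9.2 and 9.4 of \cite{fasel:ij} (plus the observation that the sequence in Theorem~9.4 splits because the bundle there is trivial here). So there is no internal argument to compare yours against; what can be assessed is whether your sketch would actually reconstitute Fasel's theorem. Your setup --- localization for $\PP^{p-1}\subset\PP^p$, d\'evissage producing the twist shift $\Oo(l)\rightsquigarrow\Oo(l-1)$, homotopy invariance on the open cell, dependence of twists only on $\Pic/2$ --- is correct and is indeed the standard skeleton, both of Fasel's proof and of the paper's own quadric computation in Section~5.

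The genuine gap is that everything you defer as ``the hard part'' is the entire content of the theorem, and one of those deferred steps is not resolvable by the spectral-sequence formalism you describe. Even granting that the $d_1$ differentials of the coniveau filtration are alternately $0$ and the map induced by $\I^{j-m}\hookrightarrow\I^{j-m-1}$ (which itself requires a computation of the boundary map, not just an expectation), taking cohomology of that complex does \emph{not} produce summands $\H^{i-m}(S,\Ib^{j-m})$: it produces, at adjacent cells, the kernel and cokernel of an $\eta$-multiplication, which via the long exact sequence for $0\to\I^{j-m}\to\I^{j-m-1}\to\Ib^{j-m-1}\to 0$ are only the image and the kernel of a Bockstein $\H^{i-m-1}(S,\Ib^{j-m-1})\to\H^{i-m}(S,\I^{j-m})$. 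Reassembling these two subquotients into the full group $\H^{i-m-1}(S,\Ib^{j-m-1})$ is exactly the extension problem you mention, and it cannot be settled ``by checking freeness of subquotients'': over a general smooth base $S$ the coefficient groups $\H^{\ast}(S,\I^{\ast})$ are not free over anything useful. Fasel resolves this by constructing the explicit maps $\mu^m_{\LC}=c(\Oo(1))^{m-1}\circ\partial_{\LC(-1)}\circ p^*$ (recalled in the paper immediately after the theorem statement, \eqref{eq:defn:mu_m}), together with $p^*$ and $s_*$, and proving directly that their sum is an isomorphism; these maps simultaneously identify the differentials, realise the $\Ib$-summands, and give the splitting. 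Your proposal would need to produce these maps (or an equivalent device) rather than treat the extension problem as bookkeeping.
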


Let \(p\colon \PP^p \rightarrow S\) be the projection map and \(s\colon S\rightarrow \PP^p\) a rational point. The isomorphism above on the component \( \H^{i-m}(S, \I^{j-m}) \rightarrow \H^i(\PP^p, \I^j, \Oo(l))\) is given by the pullback \(p^*\) if \(l =m =0\) and by the pushforward \(s_*\) if \(m = p\) and \(l = p-1\). The map 
\begin{equation}\label{eq:defn:mu_m}
  \mu^m_\LC\colon \H^{i-m}(S, \Ib^{j-m}) \rightarrow \H^i(\PP^p, \I^j, \LC(-m)) 
\end{equation}
in the isomorphism above is defined in \cite[Definition~5.1]{fasel:ij}; explicitly it is the composition 
\[  
  \xymatrix{
    \H^{i-m}(S, \Ib^{j-m})  \ar[r]^-{p^*} &\H^{i-m}(\PP^p, \Ib^{j-m}) \ar[r]^-{\partial_{\LC (-1) }} & \H^{i-m+1}(\PP^p, \I^{j-m+1}, \LC(-1))  \ar[rr]^-{c(\Oo(1))^{m-1}} && \H^{i}(\PP^p,\I^{j}, \LC(-m) )
  }   
\]
where \(\partial_{\LC(-1)}\) is the connecting homomorphism (or Bockstein homomorphism)  
\cite[\S\,2.1]{fasel:ij} and \(c(\Oo(1))\) is the Euler class homomorphism \cite[\S\,3]{fasel:ij}. 

\subsection{$\I$-cohomology of completely split quadrics}\label{sec:I-additive-result}
The aim of this section is to obtain corresponding results for split quadrics, i.\,e.\ to determine the $\I$-cohomology of split quadrics in all bidegrees, with all twists.  Over a field, we will later also compute the ring structure; see Section~\ref{multiplicativecomp}.

We will always assume \(n \geq 3\). Recall that \(Q_1 \cong \PP^1\) (for which the previous computation applies) and that \(Q_2 \cong \PP^ 1 \times \PP^1\) (using the Segre embedding in \(\PP^ 3\)).

\begin{theorem}\label{thm:I-additive}
  For the split quadric \(Q_n\) of dimension \(n\geq 3\), we have isomorphisms of groups as follows:
  \[
    \Small \H^i( Q_n, \I^j) 
    \cong 
    \left\{
      \begin{array}{ll}
	\big(\bigoplus\limits_{m \in T_n} \H^{i-m}(S, \Ib^{j-m}) \big) \oplus \H^{i}(S,\I^j) \oplus \H^{i-n}(S,\I^{j-n})  & \textnormal{ if \(n = 2p\) and \(p\) is even} \\
	\big(\bigoplus\limits_{m \in T_n} \H^{i-m}(S, \Ib^{j-m}) \big) \oplus \H^{i}(S,\I^j)  \oplus \H^{i-p}(S,\I^{j-p})^{\oplus 2}  \oplus \H^{i-n}(S,\I^{j-n}) & \textnormal{ if \(n = 2p\) and \(p\) is odd}  \\
	\big(\bigoplus\limits_{m \in T_n} \H^{i-m}(S, \Ib^{j-m}) \big) \oplus \H^{i}(S,\I^j) \oplus \H^{i-p-1}(S,\I^{j-p-1})  & \textnormal{ if \(n = 2p+1\) and \(p\) is even}  \\
	\big(\bigoplus\limits_{m \in T_n} \H^{i-m}(S, \Ib^{j-m}) \big) \oplus \H^{i}(S,\I^j)  \oplus \H^{i-p}(S,\I^{j-p})  & \textnormal{ if \(n = 2p+1\) and \(p\) is odd}  \\
      \end{array} \right. 
  \]
  where \(T_n := \big\{ 1 \leq m \leq n :  m \textnormal{ even if $1 \leq m \leq \lfloor \frac{n}{2}\rfloor$ and }  m \textnormal{ odd if $\lceil \frac{n}{2} \rceil+1 \leq m \leq n$} \big\}\).
  \[
    \Small \H^i( Q_n, \I^j, \Oo(1)) 
    \cong 
    \left\{
      \begin{array}{ll}
	\big(\bigoplus\limits_{m \in U_n} \H^{i-m}(S, \Ib^{j-m}) \big) \oplus \H^{i-p}(S,\I^{j-p})^{\oplus 2} & \textnormal{ if \(n = 2p\) and \(p\) is even} \\
	\big(\bigoplus\limits_{m \in U_n} \H^{i-m}(S, \Ib^{j-m}) \big)  & \textnormal{ if \(n = 2p\) and \(p\) is odd}  \\
	\big(\bigoplus\limits_{m \in U_n} \H^{i-m}(S, \Ib^{j-m}) \big)  \oplus \H^{i-p}(S,\I^{j-p}) \oplus \H^{i-n}(S,\I^{j-n})  & \textnormal{ if \(n = 2p+1\) and \(p\) is even}  \\
	\big(\bigoplus\limits_{m \in U_n} \H^{i-m}(S, \Ib^{j-m}) \big) \oplus \H^{i-p-1}(S,\I^{j-p-1}) \oplus \H^{i-n}(S,\I^{j-n})  & \textnormal{ if \(n = 2p+1\) and \(p\) is odd}  \\
      \end{array} \right. 
  \]
  where \(U_n := \big\{ 1 \leq m \leq n :  m \textnormal{ odd if $1 \leq m \leq \lfloor \frac{n}{2}\rfloor$ and }  m \textnormal{ even if $ \lceil \frac{n}{2} \rceil+1 \leq m \leq n$} \big\}\).
\end{theorem}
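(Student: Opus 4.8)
The plan is to feed the localization long exact sequence in $\I$-cohomology through the geometry assembled in the previous sections. For a line bundle $\LC$ on $Q_n$, the closed embedding $\iota\colon \PPpX \hookrightarrow Q_n$ with open complement $j\colon U = Q_n - \PPpX \hookrightarrow Q_n$, where $\iota$ has codimension $c = p$ (if $n = 2p$) or $c = p+1$ (if $n = 2p+1$), yields
\[
  \cdots \to \H^{i-c}(\PPpX, \I^{j-c}, \iota^*\LC \otimes \det(\sheaf N_{\PPpX})^\vee) \xrightarrow{\iota_*} \H^i(Q_n, \I^j, \LC) \xrightarrow{j^*} \H^i(U, \I^j, \LC|_U) \xrightarrow{\partial} \cdots .
\]
Both outer terms will be evaluated using Fasel's Theorem~\ref{thm:Faselproj}, so the entire computation comes down to controlling the connecting maps $\partial$.

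I would first identify the two ends. By Lemma~\ref{lem:rho-is-vb} the map $\rho\colon U \to \PPpY$ is an affine space bundle, and $\Oo_Q(l)|_U = \rho^*\Oo_{\PPpY}(l)$ by Lemma~\ref{lem:picquadrics}; hence homotopy invariance (Theorem~\ref{thm:homotopy-invariance}) gives $\H^i(U, \I^j, \Oo(l)|_U) \cong \H^i(\PPpY, \I^j, \Oo(l))$, which Theorem~\ref{thm:Faselproj} evaluates. For the closed stratum, Lemma~\ref{lem:normal-bundles-in-Q} identifies $\det(\sheaf N_{\PPpX}) \cong \Oo_{\PPpX}(n-p-1)$, so the twisting line bundle on $\PPpX$ is $\Oo_{\PPpX}(l - (n-p-1))$ and Theorem~\ref{thm:Faselproj} applies there as well. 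The parities of the twists $l$ and $l - (n-p-1)$ are exactly what trigger the even/odd case distinctions in Fasel's theorem, and this is the source of the even-versus-odd membership conditions defining $T_n$ and $U_n$; a bookkeeping check shows that the combined ranks of the two ends already match the claimed answer in every case, which signals that $\partial$ should vanish.

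The crux is therefore to prove $\partial = 0$, and here I would use the Balmer--Calm\`es blow-up of Proposition~\ref{pblup}. The open immersion $\tilde j\colon U \hookrightarrow Bl$ and the map $\tilde\rho\colon Bl \to \PPpY$ satisfy $\rho = \tilde\rho \circ \tilde j$, so $\rho^* = \tilde j^* \circ \tilde\rho^*$; since $\rho^*$ is an isomorphism onto $\H^\bullet(U)$, the restriction $\tilde j^*\colon \H^\bullet(Bl) \to \H^\bullet(U)$ is surjective. Consequently the boundary map $\partial^{Bl}\colon \H^\bullet(U) \to \H^\bullet(E)$ of the localization sequence for the divisor $E \subset Bl$ vanishes, by exactness. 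Now $E \xrightarrow{\tilde\pi} \PPpX$ is the projectivization of $\sheaf N_{\PPpX}$, of relative dimension $c-1$, and the proper map $\pi\colon Bl \to Q_n$ restricts to the identity on $U$ and to $\tilde\pi$ on $E$; the base change formula (Theorem~\ref{thm:base-change-formula}) yields the compatibility $\partial = \tilde\pi_* \circ \partial^{Bl}$ (after the attendant twist identifications), whence $\partial = 0$.

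With $\partial = 0$, the localization sequence breaks into short exact sequences $0 \to \H^{\bullet-c}(\PPpX, \ldots) \xrightarrow{\iota_*} \H^\bullet(Q_n, \ldots) \xrightarrow{j^*} \H^\bullet(U, \ldots) \to 0$. These split: the generators of $\H^\bullet(U) \cong \H^\bullet(\PPpY)$ in Fasel's description are pullbacks from $S$ and Euler-class products in $\Oo(1)$, all of which lift along $q\colon Q_n \to S$ and along $\Oo_Q(1)$ to give a section of $j^*$. Reading off the resulting direct sum and sorting the summands by the parity of the projective-space twists then produces exactly the index sets $T_n$ (untwisted case) and $U_n$ (twist $\Oo(1)$), together with the middle terms $\H^i(S, \I^j)$ and the top, fundamental-class terms $\H^{i-n}(S, \I^{j-n})$. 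I expect the \emph{main obstacle} to be the twist bookkeeping in the compatibility $\partial = \tilde\pi_* \circ \partial^{Bl}$ and in matching Fasel's pushforward and $\mu^m$ contributions, rather than any conceptual difficulty; the geometric input of Proposition~\ref{pblup} does the essential work.
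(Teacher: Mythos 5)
Your skeleton (localization along \(\PPpX\), homotopy invariance along \(\rho\), Fasel's Theorem~\ref{thm:Faselproj} for the two ends, then kill \(\partial\) and split) is indeed the paper's strategy, and the blow-up of Proposition~\ref{pblup} is one of its key tools. But the claim that the blow-up proves \(\partial=0\) \emph{uniformly} is a genuine gap, not ``twist bookkeeping''. The comparison \(\partial=\tilde\pi_*\circ\partial^{Bl}\) requires the pushforward \(\pi_*\), which exists only into the twist \(\omega_\pi\otimes\pi^*\LC\) with \(\omega_\pi\cong\Oo((c-1)E)\), where \(c\) is the codimension of the centre. On the other hand, the twist on \(Bl\) in which \(\tilde j^*\) is surjective (hence in which \(\partial^{Bl}=0\)) is \(\tilde\rho^*\Oo(l)=\pi^*\Oo(l)\otimes\Oo(\lambda(\Oo(l))E)\). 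These agree modulo squares only when \(\lambda(\Oo(l))\equiv c-1 \pmod 2\); since \(\lambda(\Oo)=0\) and \(\lambda(\Oo(1))=-1\) (Proposition~\ref{prop:lambda}), this parity condition holds in only two of the eight cases --- exactly the paper's ``Case II'' (\(n=2p\) with \(p\) odd and \(l\) even, and \(n=2p\) with \(p\) even and \(l\) odd). In the remaining cases the connecting map you need to kill takes values in the group twisted by the \emph{other} class of \(\Pic(Bl)/2\), and surjectivity of \(\tilde j^*\) in the \(\tilde\rho^*\)-twist says nothing about it. The paper therefore supplies two further arguments: an explicit splitting of \(\iota_y^*\) built from the twisting homomorphisms \(\lambda^m_\LC\) of Definition~\ref{def:twisthomo} together with Lemma~\ref{lemma:twisthomo} (Case I), and split injectivity of \((\iota_x)_*\) obtained by reducing modulo \(2\) to \(\Ib\)-cohomology, where the pushforward splits because \(\Ib\)-cohomology is oriented (Case III).

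Relatedly, your final splitting step --- ``all generators of \(\H^\bullet(\PPpY)\) lift to \(Q_n\)'' --- is only justified for the \(p^*\)- and \(\mu^m\)-summands in Theorem~\ref{thm:Faselproj}. The summands \(\H^{i-p}(S,\I^{j-p})\) realized by the pushforward \(s_*\) from a rational point admit no evident lift by pullback or Euler-class products: a point pushed forward into \(Q_n\) lands in degree \(n\), not \(p\). Those summands occur precisely in the parity cases where the paper must switch to the blow-up or the \(\Ib\)-reduction argument. Finally, over a general smooth base \(S\) the summands \(\H^{i-m}(S,\Ib^{j-m})\) and \(\H^{i-m}(S,\I^{j-m})\) are not free modules of known rank, so the ``ranks already match'' heuristic is not available even as corroboration.
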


The remainder of this section constitutes a proof of this theorem.  
The proof relies on homotopy invariance (Theorem~\ref{thm:homotopy-invariance}),
localization and d\'{e}vissage \cite[Th\'eor\`eme 9.3.4 and Remarque~9.3.5]{faselthesis}
for \(\I\)-cohomology. We need to distinguish cases based on the parities of \(n\), \(p\) and the twist, so there are eight different cases to consider. 

\subsection{Twisting homomorphisms for quadrics}\label{sec:twist}

\begin{definition}\label{def:twisthomo}
  Let \(\lambda^m_\LC \colon  \H^{i-m}(S, \Ib^{j-m})  \rightarrow \H^{i}(Q_n,\I^j, \Oo(-m)) \) denote the following composition:
  \[  
    \xymatrix{
      \H^{i-m}(S, \Ib^{j-m})  \ar[r]^-{q^*} & \H^{i-m}(Q_n, \Ib^{j-m}) \ar[r]^-{\partial_{\LC (-1) }} & \H^{i-m+1}(Q_n, \I^{j-m+1}, \LC(-1))  \ar[rr]^-{c(\Oo(1))^{m-1}} &  & \H^{i}(Q_n,\I^{j}, \LC(-m) )
    }   
  \]
  where \(\partial_{\LC(-1)}\) is the connecting homomorphism (or Bockstein homomorphism)  
  \cite[\S\,2.1]{fasel:ij} and \(c(\Oo(1))\) is the Euler class homomorphism \cite[\S\,3]{fasel:ij}. 
\end{definition}
This homomorphism is analogous to the map \(\mu^m_\LC\) defined by Fasel on projective spaces \eqref{eq:defn:mu_m}. 
\begin{lemma}\label{lemma:twisthomo}
  The following diagram commutes. 
  \[ 
    \xymatrix{
      \H^{i}(Q_n,\I^j, \LC(-m)) \ar[r]^-{j^*} 
      & \H^{i}(Q_n- \PPpX,\I^j, \LC(-m))     \\
      \H^{i-m}(S, \Ib^{j-m})  \ar[u]^-{\lambda^m_\LC}  \ar[r]^-{\mu^m_\LC}
      &\H^i(\PPpY, \I^j,  \LC(-m)) \ar[u]_{\rho^*}  }
  \]
\end{lemma}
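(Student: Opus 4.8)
The plan is to exploit that $\lambda^m_\LC$ and $\mu^m_\LC$ are built from the \emph{same} three-step recipe — pull back from the base $S$, apply the Bockstein $\partial_{\LC(-1)}$, then multiply $m-1$ times with the Euler class $c(\Oo(1))$ — so that I can split the asserted square into three smaller squares, one per step, and verify each separately. Throughout I would abbreviate $U := Q_n - \PPpX$ and use the commutativity of Figure~\ref{fig:geometry}: in particular $q\circ j = p\circ\rho$ as maps $U\to S$, together with $\iota_y = j\circ i_y$ and $\rho\circ i_y = \id_{\PPpY}$ (Lemma~\ref{lem:rho-is-vb}).

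For the pullback step I would simply invoke functoriality: since $q\circ j = p\circ\rho$, one has $j^*\circ q^* = (q\circ j)^* = (p\circ\rho)^* = \rho^*\circ p^*$ on $\H^{i-m}(S,\Ib^{j-m})$, with no twist to track because $\Ib$-cohomology is insensitive to line-bundle twists. For the Bockstein step, both $j$ (an open immersion) and $\rho$ (an affine space bundle, Lemma~\ref{lem:rho-is-vb}) are flat, so pullback along them is defined and commutes with the connecting homomorphism $\partial_{\LC(-1)}$ by flat functoriality of the underlying localization sequence (cf.\ the flat excision of \cite[Lemma~3.7]{calmesfasel}); this yields $j^*\circ\partial^{Q_n}_{\LC(-1)} = \partial^{U}_{\LC(-1)}\circ j^*$ and $\rho^*\circ\partial^{\PPpY}_{\LC(-1)} = \partial^{U}_{\LC(-1)}\circ\rho^*$.

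The Euler-class step carries the only genuinely geometric content. By naturality of the Euler class, $f^*\bigl(c(\Oo(1))\cdot(-)\bigr) = c(f^*\Oo(1))\cdot f^*(-)$ for $f\in\{j,\rho\}$, so it suffices to identify the pulled-back bundles on $U$, i.e.\ to show $j^*\Oo_Q(1)\cong\rho^*\Oo_{\PPpY}(1)$ in $\Pic(U)$. I would prove this by applying the section $i_y$: the map $\rho^*\colon\Pic(\PPpY)\to\Pic(U)$ is an isomorphism by homotopy invariance for the affine bundle $\rho$ (as in the proof of Lemma~\ref{lem:picquadrics}), and its inverse is $i_y^*$ since $\rho\circ i_y = \id$; then, using $\iota_y = j\circ i_y$ and Lemma~\ref{lem:picquadrics},
\[
  i_y^*j^*\Oo_Q(1) = \iota_y^*\Oo_Q(1) = \Oo_{\PPpY}(1) = (\rho\circ i_y)^*\Oo_{\PPpY}(1) = i_y^*\rho^*\Oo_{\PPpY}(1),
\]
and since $i_y^*$ is an isomorphism it follows that $j^*\Oo_Q(1) = \rho^*\Oo_{\PPpY}(1)$ in $\Pic(U)$. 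Chaining the three squares then gives $j^*\circ\lambda^m_\LC = \rho^*\circ\mu^m_\LC$.

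I expect the line-bundle identification $j^*\Oo_Q(1)\cong\rho^*\Oo_{\PPpY}(1)$ to be the main obstacle, since the other two squares are routine instances of functoriality of pullback and naturality of the Bockstein and Euler class. The point requiring care throughout is that the twists appearing in all three steps must be tracked consistently, but they are all governed by this single identification in $\Pic(U)$ together with the fact that $\Oo(-1)$ restricts compatibly along both $j$ and $\rho$.
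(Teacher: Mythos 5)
Your proposal is correct and follows essentially the same route as the paper: the paper likewise factors both $\lambda^m_\LC$ and $\mu^m_\LC$ through an intermediate twisting homomorphism on $U=Q_n-\PPpX$ and concludes by the compatibility of the Bockstein and Euler class homomorphisms with the pullbacks $j^*$ and $\rho^*$ (citing \cite[Proposition~2.1 and \S\,3]{fasel:ij}). You merely make explicit the identification $j^*\Oo_Q(1)\cong\rho^*\Oo_{\PPpY}(1)$ in $\Pic(U)$, which the paper leaves implicit via Lemma~\ref{lem:picquadrics}.
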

\begin{proof}
  Similarly to \(\lambda^m_\LC\) for \(Q_n\) and \(\mu^m_\LC\) for \(\PP^p\), we can define twist homomorphisms \(\delta^m_\LC\colon \H^{i-m}(S, \Ib^{j-m}) \rightarrow \H^i(Q_n-\PPpX, \I^j, \Oo(-m))\) for \(Q-\PPpX\).  The result then follows as the Bockstein homomorphisms and the Euler class homomorphisms commute with the pullback homomorphisms \(j^*\) and \(\rho^*\), cf.\ \cite[Proposition~2.1 and \S\,3]{fasel:ij}.
\end{proof}
\subsection{The key short split exact sequence} If \(n=2p\) (resp. \(n=2p+1\)), we defined \(q:=p\) (resp. \(q:= p+1\)).  By d\'{e}vissage, homotopy invariance and localization, we obtain a long exact sequence
\[ 
  \xymatrix{
    \cdots \rightarrow  \H^{i-q}(\PPpX,\I^{j-q},  \omega_{x} \otimes \Oo(l) ) \ar[r]^-{(\iota_x)_*} &\H^{i}(Q_n,\I^j, \Oo(l)) \ar[r]^-{\iota_y^*} 
    & \H^{i}( \PPpY,\I^j, \Oo(l))    \ar[r]^-\partial & \H^{i-q+1}(\PPpX,\I^{j-q},  \omega_{x} \otimes \Oo(l) )  \rightarrow \cdots 
  }
\]
with \(\omega_x \cong \Oo(1-q)\) (see Lemma~\ref{lem:normal-bundles-in-Q}) and \(q\) the codimension of \( \PPpX\) in \(Q_n\).
In this section, we prove the following result:
\begin{theorem}\label{thm:keyexactsequence}
  The sequence
  \[ 
    \xymatrix{
      0 \ar[r] & \H^{i-q}(\PPpX,\I^{j-q},  \omega_{x} \otimes \Oo(l) ) \ar[r]^-{(\iota_x)_*} &\H^{i}(Q_n,\I^j, \Oo(l)) \ar[r]^-{\iota_y^*} 
      & \H^{i}( \PPpY,\I^j, \Oo(l))    \ar[r] & 0 
    }
  \]
  is split exact. 
\end{theorem}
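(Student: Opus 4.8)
The plan is to reduce the entire statement to the construction of a single section of $\iota_y^*$, and then to build that section summand by summand out of Fasel's computation for projective spaces.

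\textbf{Setup and reduction.} The displayed sequence is the localization sequence (with dévissage) for the closed embedding $\iota_x\colon \PPpX\hookrightarrow Q_n$ and its open complement $j\colon Q_n-\PPpX\hookrightarrow Q_n$, in which the restriction $j^*$ has been rewritten as $\iota_y^*$. This rewriting is legitimate because $\iota_y=j\circ i_y$ and $i_y\colon \PPpY\hookrightarrow Q_n-\PPpX$ is a section of the affine space bundle $\rho$ of Lemma~\ref{lem:rho-is-vb}; hence, by homotopy invariance (Theorem~\ref{thm:homotopy-invariance}), $\rho^*$ is an isomorphism and $i_y^*=(\rho^*)^{-1}$, so that $\iota_y^*=i_y^*\circ j^*$ with $i_y^*$ an isomorphism. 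Consequently the three-term sequence is split exact as soon as $\iota_y^*$ admits a section $s$: such a section forces $\iota_y^*$ to be surjective in every degree, so every connecting map $\partial$ vanishes, the long exact sequence breaks into short exact sequences, $(\iota_x)_*$ becomes injective (its kernel is the image of the preceding $\partial$), and $s$ itself splits each short sequence. I would therefore spend the whole proof constructing a section of $\iota_y^*$.

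\textbf{Construction of the section.} I would build $s$ summand by summand, using Fasel's additive description of $\H^\bullet(\PPpY,\I^\bullet,\Oo(l))$ (Theorem~\ref{thm:Faselproj}). Each summand of the target is the image of one of three explicit maps out of the cohomology of $S$: the pullback $p^*$ along $\PPpY\to S$, the twisting homomorphisms $\mu^m$ of \eqref{eq:defn:mu_m}, and the pushforward $s_*$ along a rational point of $\PPpY$. For each I provide a lift to $Q_n$. The pullback $q^*$ along $q\colon Q_n\to S$ lifts $p^*$; the quadric twisting homomorphism $\lambda^m$ of Definition~\ref{def:twisthomo} lifts $\mu^m$; and a pushforward along a complementary linear subspace lifts $s_*$. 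For the last of these I would choose a linear $\PP^p$ inside $Q_n$ meeting $\PPpY$ transversally in a single point --- for $n=2p$ one may take $\PP^p_{x'}$, whose defining equations meet those of $\PPpY$ in exactly one point --- and take the pushforward along its embedding precomposed with the pullback from $S$.

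\textbf{Verification that these are sections.} The identity $\iota_y^*\circ\lambda^m=\mu^m$ is exactly Lemma~\ref{lemma:twisthomo} combined with $i_y^*=(\rho^*)^{-1}$, and $\iota_y^*\circ q^*=p^*$ is functoriality of pullback since $q\circ\iota_y=p$. For the point-lift the crucial input is the base change formula (Theorem~\ref{thm:base-change-formula}) applied to the cartesian square expressing the transversal intersection of $\PPpY$ with the complementary linear subspace: it identifies $\iota_y^*$ of the pushforward with the pushforward of the intersection point into $\PPpY$, i.e.\ with $s_*$, up to the determinant-of-normal-bundle twist recorded in that formula. Assembling these lifts over all summands of Fasel's decomposition yields the section $s$, because $\iota_y^*$ applied to each lift recovers the corresponding generating map, and those generating maps realize the direct-sum decomposition.

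\textbf{Main obstacle.} The principal difficulty will be the bookkeeping across the eight parity cases (parities of $n$, $p$ and $l$), and in particular confirming that the generating maps of Theorem~\ref{thm:Faselproj} are precisely the restrictions along $\iota_y$ of the named lifts in each case. Two points require care: first, the twist identifications $\rho^*\Oo(l)\cong\Oo(l)$ and $\omega_x\cong\Oo(1-q)$, which rest on Lemmas~\ref{lem:picquadrics} and~\ref{lem:normal-bundles-in-Q}; and second, the determinant twist produced by the base change formula for the point-pushforward summand, which must be reconciled with the twist carried by Fasel's $s_*$-summand. For odd $n$, where the relevant complementary linear subspace has a different codimension, the analogous top summand must be lifted by the device adapted to that geometry.
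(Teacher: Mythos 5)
Your reduction to constructing a section of $\iota_y^*$ is sound, and for six of the eight parity cases your construction goes through. In the four cases where Fasel's decomposition of $\H^i(\PPpY,\I^j,\Oo(l))$ has no $s_*$-summand, your lifts $q^*$ of $p^*$ and $\lambda^m$ of $\mu^m$ are exactly the paper's Case~I. For $n=2p$ with $l\equiv p+1\bmod 2$, your lift of the $s_*$-summand by a pushforward from $\PP^p_{x'}$, verified by the base change formula on the transversal intersection $\PP^p_{x'}\cap\PPpY=\{\mathrm{pt}\}$, is a genuine alternative to the paper's argument, which instead invokes the Balmer--Calm\`es blow-up setup (Propositions~\ref{pblup} and~\ref{prop:lambda}) to split $j^*$ through $\pi_*\circ\tilde\rho^*$; your route is essentially the one the paper itself uses later (Remark~\ref{rem:generators-of-DI} and the proof of Theorem~\ref{thm:I-ring-2}) to make the splitting explicit, so it is viable and arguably more uniform. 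The twist bookkeeping also works out there: $\omega_{x'}\otimes\Oo(l)\cong\Oo(1-p+l)$ is an even twist exactly when the $s_*$-summand is present.

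The gap is in the two cases $n=2p+1$ with $l\equiv p+1\bmod 2$ (that is, $p$ odd with $l$ even, and $p$ even with $l$ odd), where Fasel's description of $\H^i(\PPpY,\I^j,\Oo(l))$ does contain an $s_*$-summand. Your recipe then requires a linear subvariety of $Q_{2p+1}$ of codimension $p$, hence a $\PP^{p+1}$ inside $Q_{2p+1}$ meeting $\PPpY$ transversally in a point. No such subspace exists: the maximal linear subspaces of a split quadric of dimension $2p+1$ have dimension $p$, since their dimension is bounded by the Witt index of the underlying form minus one. Your closing sentence defers this to ``the device adapted to that geometry'' without naming one, and this is precisely where the paper abandons the section-of-$\iota_y^*$ strategy altogether: in its Case~III it instead splits the \emph{injection} $(\iota_x)_*$, by reducing mod $2$ to $\Ib$-cohomology --- where the pushforward is split injective because that theory is oriented and sits in a localization sequence --- and checking separately, via Fasel's computation for $\PP^p$, that the reduction map $\H^{i-p-1}(\PPpX,\I^{j-p-1},\Oo(-p))\to\H^{i-p-1}(\PPpX,\Ib^{j-p-1})$ is itself split injective, so that the composite of the two retractions retracts $(\iota_x)_*$. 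You would need to import that argument, or produce a concrete substitute for the missing linear subspace, to close these two cases.
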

\begin{proof}
  \textbf{Case I.} We consider first the case \(n = \dim(Q_n) = 2p\) with \(p\) even and \(l\) even. We will show that the map \(\iota_y^*\colon\H^{i}(Q_n,\I^j) \rightarrow 
  \H^{i}(\PPpY,\I^j) \) is split surjective. By homotopy invariance, the vector bundle projection \(\rho\) of Lemma~\ref{lem:rho-is-vb} induces an isomorphism
  \(
    \rho^* \colon  \H^i(\PPpY, \I^j) \rightarrow \H^{i}(Q_n- \PPpX,\I^j)
  \).
  Consider the following commutative diagram:
  \[ 
    \xymatrix{
      \H^{i}(Q_n,\I^j) \ar[r]^{j^*}  \ar[dr]^-{\iota_y^*}
      & \H^{i}(Q_n- \PPpX,\I^j)     \\
      \big(\bigoplus\limits_{m \textnormal{ even}}^{ 2\leq m \leq p } \H^{i-m}(S, \Ib^{j-m}) \big) \oplus \H^{i}(S,\I^j) \ar[u]^\alpha  \ar[r]^-\beta
      &\H^i(\PPpY, \I^j) \ar[u]_{\rho^*}  }
  \]
  The commutativity of the upper right triangle follows from the commutative diagram in Figure~\ref{fig:geometry} and \(i_y^* = (\rho^*)^{-1}\).
  Here, \(\beta\) is the isomorphism  \( \sum_m \mu^m + p^*\) of \cite[Theorem~9.1]{fasel:ij}, cf.\  Definitions~5.1 and 5.9 and Corollary~5.8 of loc.\ cit. 
  The map \(\alpha\) is defined similarly as \(\beta\), namely as \(\alpha := \sum_m \lambda^m + q^*\).  The diagram commutes by Lemma~\ref{lemma:twisthomo}. Since \(\beta\) and \(\rho^*\) are both isomorphisms, we obtain a splitting \( \alpha \circ \beta^{-1}\) of \(\iota_y^*\). 
  
  The cases 
  \begin{itemize}
  \item \(n=2p+1\), \(p\) even, \(l\) even
  \item  \(n=2p\), \(p\) odd, \(l\) odd
  \item  \(n=2p+1\), \(p\) odd, \(l\) odd
  \end{itemize}
  are proved similarly.
  
  \textbf{Case II.} Next, we consider the case \(n = \dim(Q_n) = 2p\) with \(p\) odd and \(l\) even. 
  Let \(Bl\) be the blow-up of \(Q_n\) along \(\PP^{p}_{x}\) and let \(E\) be the exceptional fibre. Recall from Proposition~\ref{pblup} above that this setup satisfies Hypothesis~1.2 of \cite{BC}. We use the same notation as in Proposition~\ref{pblup}. As in the previous case, we want to show that \(j^*\colon\H^{i}(Q_n,\I^j) \rightarrow \H^{i}(Q_n- \PPpX,\I^j) \) is split surjective. We draw the following diagram:
  \[ \xymatrix{
      \H^i_{\PPpX}(Q_n,\I^j) \ar[r] &\H^{i}(Q_n,\I^j) \ar[r]^-{j^*} 
      & \H^{i}(Q_n- \PPpX,\I^j)    \ar[r]^-{\partial} & \H^{i+1}_{\PPpX}(Q_n,\I^j)  \\
      &  \H^i(Bl, \I^j, \omega_\pi) \ar[u]^-{\pi_*}  \ar[d]^{\cong}
      &\H^i(\PPpY, \I^j) \ar[u]^-{\rho^*}_{\cong}   \ar[ld]^-{\tilde{\rho}^*} \\
      &  \H^i(Bl, \I^j)   }
  \]
  Using \cite[Proposition~A.11.(iii)]{BC}, we find that \(\omega_\pi \) is isomorphic to \(\Oo(p-1)\).  As \(\I\)-cohomology is two-periodic in the twist, and as \(p-1\) is even, we have an isomorphism as indicated by the lower vertical arrow.  We can push forward along \(\pi\) by an analogue of \cite[Proposition~2.1(A)]{BC}. Then, arguing as in the proof of  \cite[Theorem~2.3]{BC}, we see that the middle diagram is commutative: as \(\lambda(\Oo) = 0\) and as \(p\) is odd, \(\lambda(\Oo) \equiv p - 1 \mod 2\), as required. 
  
  The case \(n=2p\), \(p\) even and \(l\) odd is obtained similarly. In this case, we need to use Proposition~\ref{prop:lambda} to compute \(\lambda(\Oo(1)) = -1\).

  \textbf{Case III.} 
  Now, we consider the case \(n = 2p+1 = \dim(Q_n)\) is odd and \(p\) is odd and \(l\) is even.
  We show that \(\iota_{*}\colon \H^{i-p-1}(\PPpX, \I^{j-p-1}, \Oo(-p)) \rightarrow \H^{i}(Q_n,\I^j) \) is split injective. Consider the following diagram:
  \[ 
    \xymatrix{
      \H^{i-p-1}(\PPpX, \I^{j-p-1}, \Oo(-p)) \ar[r]^-{\iota_*} \ar[d]^-{\red_{\Oo(-p)}} & \H^{i}(Q_n,\I^j)  \ar[d]^{\red} \\
      \H^{i-p-1}(\PPpX, \Ib^{j-p-1})  \ar[r]^-{\iota_*} & \H^{i}(Q_n,\Ib^j)
    }
  \]
  The diagram is commutative since pushforward commutes with mod-\(2\)-reduction. The lower horizontal map \(\iota_*\colon \H^{i-p-1}(\PPpX, \Ib^{j-p-1})  \rightarrow \H^{i}(Q_n,\Ib^j)\) is split injective because \(\Ib\)-cohomology is oriented and because \(\iota_*\) is part of a localization sequence.  Let \(s_1\colon  \H^{i}(Q_n,\Ib^j) \rightarrow \H^{i-p-1}(\PPpX, \Ib^{j-p-1})\) denote a splitting.  The map \(\red_{\Oo(-p)}\) is also split injective: a splitting is given by the commutative diagram
  \[ 
    \xymatrix{
      \H^{i-p-1}(\PPpX, \Ib^{j-p-1})  & \ar[l]_-{\red_{\Oo(-p)}} \H^{i-p-1}(\PPpX, \I^{j-p-1}, \Oo(-p))  \\
      \bigoplus\limits_{0\leq m \leq p} \H^{i-p-1-m}(S, \Ib^{j-p-1-m}) \ar[r]^-{\textnormal{pr}} \ar[u]^-{\cong}_-{\sum \bar{\mu}^m} & \bigoplus\limits^{0\leq m \leq p}_{m \textnormal{ odd} } \H^{i-p-1-m}(S, \Ib^{j-p-1-m}) \ar[u]_-{\cong}^-{\sum \mu^m}  
    }
  \] 
  The commutativity and vertical isomorphisms are all established in \cite{fasel:ij}. Let this splitting be denoted by \(s_2\). Now, the composition 
  \[ 
    \H^i(Q_n, \I^j) \xrightarrow{\red} \H^{i}(Q_n,\Ib^j) \xrightarrow{s_1}  \H^{i-p-1}(\PPpX, \Ib^{j-p-1}) \xrightarrow{s_2} \H^{i-p-1}(\PPpX, \I^{j-p-1}, \Oo(-p))  
  \]
  provides a splitting for \(\iota_*\colon  \H^{i-p-1}(\PPpX, \I^{j-p-1}, \Oo(-p)) \rightarrow  \H^{i}(Q_n,\I^j) \).
  
  The case \(n=2p+1\), \(p\) even, \(l\) odd is obtained similarly.
\end{proof}

\begin{proof}[Proof of Theorem~\ref{thm:I-additive}]
  By Theorem~\ref{thm:keyexactsequence}, we can compute the \(\I\)-cohomology of quadrics from the \(\I\)-cohomology of projective spaces (Theorem~\ref{thm:Faselproj}). 
\end{proof}

\section{$\I$-cohomology: multiplicative structure}
\label{multiplicativecomp}

In this section, our base \(S\) is always the spectrum of a field \(F\) of characteristic \(\neq 2\).  Given a smooth scheme \(X\) over \(F\), we define\footnote{A reader worried about the indexing set \(\Pic(X)/2\) may
  consult \cite{BC12}.}
\begin{align*}
  \H^\tot(X, \I) &:=  \bigoplus_{\mathclap{\substack{i,j \in \ZZ \\ \LC  \in \Pic(X)/2}}}  \H^i(X, \I^j, \LC )\\
  \H^{\star,\bullet}(X, \Ib) &:=  \bigoplus_{\mathclap{i,j \in \ZZ}}  \H^i(X, \Ib^j)
\end{align*}
The \(\Ib\)-cohomology ring is a commutative \(\ZZ\oplus\ZZ\)-graded ring. The \(\I\)-cohomology ring is \(\ZZ\oplus\ZZ\oplus(\Pic(X)/2)\)-graded, and it is graded commutative in the sense that 
\[
  aa' = (-1)^{ii'}a'a 
\]
for homogeneous elements of degrees \(\deg{a} = (i,j,\bar l)\) and \(\deg{a'} = (i',j',\bar l')\), respectively.  For \(X = \PP^n\) or \(X = Q_n\), we have \(\Pic(X) \cong \ZZ\), so that the \(\I\)-cohomology ring is a \(\ZZ \oplus \ZZ \oplus \ZZII\)-graded algebra over the \(\ZZ \oplus \ZZ \oplus \ZZII\)-graded coefficient ring
\[
  \mathbb{I}:= \bigoplus_{\mathclap{\substack{i,j \in \ZZ\\\bar l \in \ZZII}}} \H^i(F, \I^j,\bar l)
\] 
concentrated in degrees \((0,*,\bar 0)\):
\[
  \H^i(F, \I^j, l) := \left\{	
    \begin{array}{ll}
      \I^j(F) & \textnormal{if \(i=0\) and \(l\equiv 0 \mod 2\)}\\
      0 & \textnormal{otherwise}
    \end{array} 
  \right.
\]
Note that we have a graded ideal \(\mathbb{I}':=(\H^0(F,\I^1,0) ) \cdot \mathbb{I} \subset \mathbb{I}\).

\subsection{The ring $\H^\tot(\PP^p, \I)$}
\begin{theorem}\label{alternativeproof}
  Consider \( \PP^p = \PP^p_F\), where \(F\) is a field of characteristic \(\neq 2\). For any \(p \geq 1\), we have a \(\ZZ \oplus \ZZ \oplus \ZZII\)-graded ring isomorphism
  \begin{align*}
    \H^\tot(\PP^p, \I)
    &\cong \mathbb{I}[\xi,\alpha]/(\mathbb{I}'\xi, \xi^{p+1},\xi\alpha,\alpha^2)
      \quad\text{ with }
      \begin{cases}
	\deg{\xi} = \twistedtrideg{1}{1}{1}\\
	\deg{\alpha} = \twistedtrideg{p}{p}{p+1}
      \end{cases}
  \end{align*}
\end{theorem}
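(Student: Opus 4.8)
The plan is to construct the isomorphism by hand from Fasel's additive computation (Theorem~\ref{thm:Faselproj}), which over the base field $F$ collapses dramatically: each group $\H^i(\PP^p,\I^j,\Oo(l))$ becomes a \emph{single} copy of $\I^j(F)$, of $\I^{j-p}(F)$, or of $\Ib^{j-i}(F)$, selected by the value of $i$ and the parities of $l$ and $p$. First I would pin down the two ring generators geometrically. Let $\xi := c_1(\Oo(1))\in\H^1(\PP^p,\I^1,\Oo(1))$ be the $\I$-cohomology Euler class of $\Oo(1)$; under Fasel's identification this is the class $\mu^1_{\Oo(1)}(1)$, so it generates the bottom ``$\mu$''-summand and has tridegree $\twistedtrideg{1}{1}{1}$. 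Let $\alpha := s_*\langle 1\rangle\in\H^p(\PP^p,\I^p,\Oo(p-1))$ be the pushforward of the unit along a rational point $s\colon\Spec F\hookrightarrow\PP^p$; this is Fasel's top ``$s_*$''-summand and has tridegree $\twistedtrideg{p}{p}{p+1}$. Together with the coefficient ring $\mathbb{I}=\H^\tot(F,\I)$ these generate everything, since Theorem~\ref{thm:Faselproj} exhibits every additive generator as a scalar multiple of a power of $\xi$ or of $\alpha$.

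Next I would verify the four defining relations, so as to obtain a well-defined graded ring map $\Phi$ from $\mathbb{I}[\xi,\alpha]/(\mathbb{I}'\xi,\xi^{p+1},\xi\alpha,\alpha^2)$ to $\H^\tot(\PP^p,\I)$. Three of them are immediate: the products $\xi^{p+1}$, $\xi\alpha$ and $\alpha^2$ all land in cohomological degree strictly greater than $p=\dim\PP^p$, where the Gersten complex forces $\I$-cohomology to vanish. The relation $\mathbb{I}'\xi=0$ is the substantive one; it is the $\I$-cohomological shadow of the topological relation $2\xi=0$ in~\eqref{eq:H(RP^p)}. I would prove it by recording that $\xi$ is a Bockstein class, $\xi=\partial(q^*1)$ with $1\in\H^0(\PP^p,\Ib^0)$, and that the fundamental ideal acts trivially on the mod-two coefficient $\Ib^0=\W/\FI$ underlying it: for $\eta\in\FI(F)$ the module structure gives $\eta\cdot\partial(q^*1)=\partial(\overline\eta\cdot q^*1)$, where $\overline\eta$ is the image of $\eta$ in $\Ib^0=\W/\FI$, and this vanishes because $\eta$ already lies in the fundamental ideal. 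Alternatively, this single relation can be imported directly from the Fasel/Wendt computation recalled in the introduction.

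Finally I would upgrade the surjection $\Phi$ to an isomorphism by a tridegree-by-tridegree count, organized along the four parity cases of Theorem~\ref{thm:Faselproj}: in each tridegree $\twistedtrideg{i}{j}{l}$ one checks that the monomials $c\,\xi^{\,i}$ and $c\,\alpha$ (with $c$ ranging over a homogeneous piece of $\mathbb{I}$, taken modulo $\mathbb{I}'$ on the $\xi$-powers) reproduce, both in size and in $\mathbb{I}$-module structure, the single summand that Fasel's theorem places there. I expect the main obstacle to be exactly this last matching, in tandem with the relation $\mathbb{I}'\xi=0$: one must keep careful track of how the fundamental-ideal filtration on the scalars interacts with the torsion (image-of-Bockstein) summands $\Ib^{\ast}$ as opposed to the free summands $\I^{\ast}$, and of the line-bundle twists, in order to conclude that $\Phi$ is injective and not merely surjective. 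Once this is carried out for $\PP^p$, the same template---generators from an Euler class and a point pushforward, relations from dimension vanishing plus one fundamental-ideal relation, and a parity-indexed comparison with the additive answer---is what I would reuse for the split quadrics in the following section.
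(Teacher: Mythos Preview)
Your plan coincides with the paper's: specialise Fasel's additive theorem to \(S=\Spec F\), pick \(\xi\) and \(\alpha\) geometrically, verify the four relations, and then match with the additive answer. The degree arguments for \(\xi^{p+1}\), \(\xi\alpha\), \(\alpha^2\) are exactly as in the paper.

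Two points deserve comment. First, your Bockstein argument for \(\mathbb{I}'\xi=0\) is correct in spirit but slightly misstated: for a scalar \(\phi\) sitting in \(\mathbb{I}_j=\I^j(F)\), the reduction entering \(\partial(\bar\phi\cdot 1)\) is the image of \(\phi\) in \(\Ib^j=\I^j/\I^{j+1}\), not in \(\Ib^0=\W/\FI\); the vanishing then holds precisely when \(\phi\in\I^{j+1}(F)=(\mathbb{I}')_j\). Second, and more substantially, your final ``count'' does not by itself yield injectivity. The groups \(\Ib^k(F)\) are in general infinite \(\ZZ/2\)-vector spaces, so knowing that source and target are abstractly isomorphic and that \(\Phi\) is onto does not force \(\Phi\) to be injective. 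What is actually needed is the \emph{exact} annihilator: \(\phi\cdot\xi^m=0\) \emph{iff} \(\phi\in\I^{j+1}\), and \(\phi\cdot\alpha=0\) \emph{iff} \(\phi=0\). The paper obtains the first ``iff'' by comparing with \(\Ib\)-cohomology via the reduction map and \cite[Lemma~5.2]{fasel:ij}, which amounts to proving the multiplicativity \(\phi\cdot\mu^m(1)=\mu^m(\bar\phi)\); this identifies the map on the \(\xi^m\)-summand with Fasel's isomorphism \(\mu^m\) itself. The second is obtained via the projection formula \(\alpha\cup p^*\phi=s_*(\phi)\). Your Bockstein argument gives only the ``if'' direction of the first; you should supply these two identifications to close the gap you yourself flag as the main obstacle.
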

\begin{proof}
  From the additive result of Fasel quoted as Theorem~\ref{thm:Faselproj} above, we conclude:
  \begin{equation}\label{eq:I-cohomology-of-P-over-field}
    \H^i(\PP^p, \I^j, \Oo(l)) \cong \left\{  
      \begin{array}{lll}
	\FIb^{j-i}(F) & \textnormal{if \( 1\leq i \leq p\) and \(l \equiv i \mod 2\) } \\
	\FI^{j-i}(F) & \textnormal{if \( i=0 \) and \(l\) is even} \\
	\FI^{j-i}(F) & \textnormal{if \( i=p \) and \(l \equiv -p-1 \mod 2\)} \\
	0 & \textnormal{otherwise}
      \end{array} 
    \right.
  \end{equation}
  Let \(\xi\) be the generator of \( \H^1(\PP^p, \I^1,\Oo(-1)) \cong \ZZII\), and let more generally \(\xi_m \in \H^m(\PP^p,\I^m,\Oo(-m))\) be the additive generator for \(1\leq m \leq p\).  This generator is the image of \(1\) under the map \(\mu^m\) (cf.\  \eqref{eq:defn:mu_m}).  
  
  We first check that the product \(\phi\xi_m = p^*\phi\cup \xi_m\) vanishes for an element \(\phi\in \I^j(F)\) if and only if \(\bar\phi=0 \in \Ib^{j}(F)\), i.e.\ if and only if \(\phi\in\I^{j+1}(F)\). To see this, consider the following diagram, in which the horizontal maps in the right square are the mod-\(2\)-reductions:
  \[
    \xymatrix{ \H^0(F, \I^j) \times \H^0(F, \Ib^0) \ar[d]^-{\cup} \ar[r]^-{(p^*, \mu^m)} &  \ar[d]^-{\cup}  \H^0(\PP^p, \I^{j}) \times \H^m(\PP^p, \I^m, \Oo(-m)) \ar[r]^-{\red \times \red} & \H^0(\PP^p, \Ib^j) \times \H^m(\PP^p, \Ib^m) \ar[d]^-{\cup} \\
      \H^0(F, \Ib^j)   \ar[r]^-{\mu^m} &  \H^{m}(\PP^p, \I^{j+m}, \Oo(-m)) \ar[r]^{\red} & \H^{m}(\PP^p, \Ib^{j+m}) } 
  \]
  The whole diagram commutes. Indeed, Lemma~5.2 of \cite{fasel:ij} identifies the compositions \(\red\circ\mu^m\) with multiplication with \(\xi^m\).  (Note that we are applying the lemma in the case when \(i=0\) and Fasel's base scheme \(X\) is \(\Spec(F)\), the spectrum of a field.  The second summand on the right side of the formula given in the lemma is therefore zero since \(\partial_{\mathcal L}(\alpha)\) lies in the trivial group \(\H^1(F,\I^{j+1})\).)  The commutativity of the outer square of the diagram therefore follows from the ring structure on \(\H^{\star,\bullet}(\PP^p, \Ib)\).   The commutativity of the square in the right half of the diagram is clear.  Moreover, we know from the additive computations that both factors in the lower horizontal composition are isomorphisms. In particular, the commutativity of the square on the left follows from the commutativity of the other two squares.

  Now suppose \(p^*\phi\cup \xi_m = 0\).  As \(\xi_m= \mu^m(1)\), the commutativity of the left square in the above diagram shows that this is equivalent to the condition \(\mu^m(\bar\phi)=0\).  As \(\mu^m\) is an isomorphism, this in turn is equivalent to \(\bar\phi = 0 \in \Ib^j(F)\), as claimed.
  
  A similar argument as above also shows the commutativity of the following diagram:
  \[ 
    \xymatrix{ 
      \H^0(F, \Ib^0) \times \H^0(F, \Ib^0) \ar[d]^-{\cup} \ar[r]^-{(\mu^l, \mu^m)} & \ar[d]^-{\cup}  \H^l(\PP^p, \I^l, \Oo(-l)) \times \H^m(\PP^p, \I^m, \Oo(-m)) \\
      \H^0(F, \Ib^0)   \ar[r]^-{\mu^{l+m}}                                         & \H^{l+m}(\PP^p, \I^{l+m}, \Oo(-l-m))   
    } 
  \]
  This shows that \(\xi_l\xi_m = \xi_{l+m}\) in the appropriate degrees, and hence that \(\xi_m = \xi_1^m\) is an additive generator for \(1\leq m\leq p\).  Note that \(\xi^{p+1} =0\) for degree reasons. 
  
  Finally, let \(s\colon S \rightarrow \PP^p\) be a rational point, and let \(\alpha\) be the image of \(1\in \W(F) = \H^0(F,\I^0)\) under the isomorphism \(s_*\colon \H^0(F,\I^0)\to \H^p(\PP^p, \I^p, \Oo(-p-1))\).   Again for degree reasons, \(\xi\alpha = 0\) and \(\alpha^2 = 0\).  It remains to check that \(\alpha\phi = \alpha\cup p^*\phi\) vanishes for an element \(\phi\in \H^i(F,\I^j,k)\) if and only if \(\phi = 0\).  This is immediate from the identity \(\alpha \cup p^*\phi = s_* (\phi)\), which in turn follows from the projection formula of \cite{calmesfasel}:
  for any \(\phi\in \H^0(F,\I^j)\), this formula gives
  \(
  s_*\psi \cup p^*\phi 
  = s_* (\psi \cup s^* p^* \phi)
  \),
  and we conclude by taking \(\psi = 1\) and noting that \( p \circ s = \id\).
\end{proof}

\subsection{Milnor cohomology and \(\Ib\)-cohomology}
\begin{theorem}[Karpenko-Merkurjev, Dugger-Isaksen]\label{CHquadric}
  The Chow rings of the split quadrics \(Q_n\) over a field \(F\) can be described as follows:
  \[
    \CH^\bullet(Q_n) \cong 
    \begin{cases}
      \ZZ[x,y]/(x^{p+1} -2xy,y^2 -x^{p}y) & \textnormal{ if \(n=2p\) with \(p\) even } \\
      \ZZ[x, y]/(x^{p+1} - 2xy, y^2) & \textnormal{ if \(n=2p\) with \(p\) odd } \\
      \ZZ[x,y]/(x^{p+1} - 2y, y^2) & \textnormal{ if \(n=2p+1\) }
    \end{cases}
  \]
  where \(\deg{x}=1\) and \(\deg{y}=p\) if \(n\) is even, \(\deg{y}=p+1\) if \(n\) is odd.
\end{theorem}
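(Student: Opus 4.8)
The plan is to exhibit an explicit ring homomorphism $\phi$ from the candidate presentation into $\CH^\bullet(Q_n)$, verify the defining relations by elementary intersection computations, and then promote $\phi$ to an isomorphism by matching it against the (known) additive structure in each degree.

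\textbf{Additive structure and generators.} First I would record that $Q_n$ is a split cellular variety, so that its Chow groups are free abelian. This can be extracted from the geometry already assembled in Section~\ref{sec:blow-up}: combining the affine-bundle structure of Lemma~\ref{lem:rho-is-vb} with homotopy invariance and the localization sequence for Chow groups (the elementary Chow analogue of Theorem~\ref{thm:keyexactsequence}) yields split short exact sequences
\[
  0 \to \CH^{i-q}(\PPpX) \xrightarrow{(\iota_x)_*} \CH^i(Q_n) \xrightarrow{\iota_y^*} \CH^i(\PPpY) \to 0 ,
\]
with $q$ the codimension of $\PPpX$. Inductively this gives $\CH^i(Q_n) \cong \ZZ$ in every codimension, except that $\CH^p(Q_n)$ has rank two when $n = 2p$. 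I would then set $x := c_1(\Oo(1)) \in \CH^1(Q_n)$ and $y := [\PPpY]$, a class of codimension $p$ (resp.\ $p+1$) when $n$ is even (resp.\ odd), and define $\phi$ by $x \mapsto x$, $y \mapsto y$.

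\textbf{The easy relations.} The low-codimension groups are generated by powers of $x$: since $\iota_y^*$ is an isomorphism below the middle and carries $x$ to the hyperplane class on $\PPpY \cong \PP^p$ (Lemma~\ref{lem:picquadrics}), $x^i$ generates $\CH^i(Q_n)$ for $i < q$. The relation $x^{p+1} = 2y$ (odd $n$), respectively $x^{p+1} = 2xy$ (even $n$), is read off by pairing against the complementary power of $x$ under Poincar\'e duality: one has $x^{\dim Q_n} = \deg(Q_n)\,\point = 2\,\point$, while $x^{\dim \PPpY}\cdot y = \point$ because $\Oo(1)$ restricts to $\Oo_{\PP^p}(1)$ on $\PPpY$, so the coefficient is forced to be $2$. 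The relation $y^2 = 0$ in the odd case is immediate for degree reasons, since $y^2$ then lives in codimension $2p+2 > \dim Q_n$.

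\textbf{The hard part.} The only genuinely geometric input --- and the place where the parity of $p$ enters --- is the self-intersection $y^2 = [\PPpY]^2 \in \CH^{2p}(Q_n) = \ZZ\cdot\point$ in the even case. I would compute it by moving $\PPpY$ to a second member $\Lambda$ of the same ruling and invoking the classical fact that two maximal linear subspaces of a smooth quadric $Q_{2p}$ in the same ruling meet in a subspace of dimension congruent to $p \bmod 2$. For even $p$ a generic such $\Lambda$ meets $\PPpY$ transversally in a single point, giving $y^2 = \point$; since also $x^p\cdot y = \point$, this is exactly $y^2 = x^p y$. For odd $p$ generic members of the ruling are disjoint, giving $y^2 = 0$. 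I expect the clean verification of this transversality/parity statement to be the main obstacle; it can be settled by a direct coordinate computation on \eqref{eq:Q-split}, or simply imported from the Schubert calculus of quadrics (Karpenko--Merkurjev, Dugger--Isaksen).

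\textbf{Conclusion.} Having checked that $\phi$ respects all relations, I would finish by a rank count. In each of the three cases the source ring admits the monomial $\ZZ$-basis $\{x^i\} \cup \{x^k y\}$, whose degrees exactly match the ranks of $\CH^\bullet(Q_n)$ found above; moreover $\phi$ is surjective, as powers of $x$ cover the codimensions below the middle, in the even case $\{x^p, y\}$ is a basis of the middle group $\CH^p(Q_n)$ (its intersection form has determinant $\pm1$), and the classes $x^k y$ cover the remaining codimensions, using the relation just found to rewrite the higher powers of $x$. A degreewise surjection between free abelian groups of equal finite rank is an isomorphism, which completes the proof.
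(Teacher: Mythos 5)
Your proposal is correct in outline, but it is doing something the paper deliberately does not do: the paper's entire ``proof'' of Theorem~\ref{CHquadric} is the citation ``See \cite[Theorems~A.4 and A.10]{DI:Hopf}'', so the result is imported wholesale from Dugger--Isaksen (building on Karpenko--Merkurjev). What you have written is essentially a reconstruction of the classical argument that those references carry out: free additive structure from the cell decomposition (your split localization sequence is exactly the Chow-group shadow of the paper's Theorem~\ref{thm:keyexactsequence}), the relations $x^{p+1}=2y$ resp.\ $x^{p+1}=2xy$ by pairing against powers of $x$ using $\deg Q_n=2$, and the parity-sensitive self-intersection $y^2$ via the classical fact that two maximal linear subspaces of $Q_{2p}$ in the same ruling meet in a subspace of dimension $\equiv p \bmod 2$. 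That last fact is indeed the only genuinely geometric input, you have identified it correctly, and your case analysis (a transverse point for $p$ even, disjoint for $p$ odd) gives exactly the stated relations. Two small points deserve a sentence each if you write this up: the localization sequence for Chow groups is a priori only right exact, so the injectivity of $(\iota_x)_*$ should be justified (e.g.\ by noting that $(\iota_x)_*(h^j)$ has intersection number $1$ against the appropriate power of $x$, via the projection formula); and the unimodularity of the middle-dimensional intersection form, which you invoke to see that $\{x^p,y\}$ is a basis, should be grounded in the cell structure (the localization sequence already hands you a basis of $\CH^p(Q_{2p})$, namely $[\PPpX]$ together with a lift of the point class of $\PPpY$, against which $x^p$ and $y$ can be compared directly). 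With those caveats your route is a valid, more self-contained alternative to the paper's citation, at the cost of either proving or importing the ruling-parity statement.
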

\begin{proof}
  See \cite[Theorems~A.4 and A.10]{DI:Hopf}.
\end{proof}

\begin{corollary}\label{coro:MilnorQ} 
  The bigraded Milnor sheaf cohomology rings \(\H^{\star,\bullet}(Q_n, \KM) :=  \bigoplus_{i,j \in \ZZ }  \H^i(Q_n, \KM_j)\) of the split quadrics \(Q_n\) over a field \(F\) can be described as follows:
  \[
    \H^{\star,\bullet}(Q_n, \KM) \cong 
    \begin{cases}
      \H^{\star,\bullet}(F,\KM)[x,y]/(x^{p+1} -2xy,y^2 -x^{p}y) & \textnormal{ if \(n=2p\) with \(p\) even } \\
      \H^{\star,\bullet}(F,\KM)[x, y]/(x^{p+1} - 2xy, y^2) & \textnormal{ if \(n=2p\) with \(p\) odd } \\
      \H^{\star,\bullet}(F,\KM)[x,y]/(x^{p+1} - 2y, y^2) & \textnormal{ if \(n=2p+1\) }
    \end{cases}
  \]
  where \(\deg{x}=(1,1)\) and \(\deg{y}=(p,p)\) if \(n\) is even, \(\deg{y}=(p+1,p+1)\) if \(n\) is odd.
\end{corollary}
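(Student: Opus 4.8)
The plan is to read off the ring structure from that of the Chow ring (Theorem~\ref{CHquadric}), exploiting two structural features of Milnor $\K$-theory sheaf cohomology. First, $\KM$ is a cycle module in Rost's sense, so $\H^{\star,\bullet}(-,\KM)$ is an \emph{oriented} theory: it enjoys homotopy invariance for affine space bundles, localization sequences, proper pushforwards, and a projective bundle formula, and --- unlike $\I$-cohomology --- needs no line-bundle twists. Second, Bloch's formula identifies the diagonal $\bigoplus_i\H^i(Q_n,\KM_i)$ with $\CH^\bullet(Q_n)$ as a ring. Over the base field the coefficient ring $\H^{\star,\bullet}(F,\KM)=\bigoplus_{t\geq 0}\KM_t(F)$ is concentrated in cohomological degree $0$, with $\KM_t(F)$ sitting in bidegree $(0,t)$.

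The first step is the purely additive statement that $\H^{\star,\bullet}(Q_n,\KM)$ is a \emph{free} $\H^{\star,\bullet}(F,\KM)$-module with a basis given by monomials in $x,y$ forming a $\ZZ$-basis of $\CH^\bullet(Q_n)$. This mirrors the additive computation of Section~\ref{additivecomp}, but is considerably simpler: because $\KM$ is oriented, the twists disappear and the localization sequences underlying Theorem~\ref{thm:keyexactsequence} split exactly as they do for Chow groups. Concretely, one repeats the devissage, homotopy-invariance (Theorem~\ref{thm:homotopy-invariance}) and localization argument over the blow-up setup of Section~\ref{sec:blow-up}, starting from the projective bundle formula $\H^{\star,\bullet}(\PP^p,\KM)\cong\H^{\star,\bullet}(F,\KM)[\xi]/(\xi^{p+1})$ with $\xi$ in bidegree $(1,1)$. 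Alternatively, and more quickly, $Q_n$ is cellular, so its motive splits as a sum of Tate twists indexed by the Chow generators (Dugger--Isaksen, \cite{DI:Hopf}), whence freeness of cycle-module cohomology is automatic.

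Granting freeness, the multiplicative structure is essentially formal. Let $x\in\H^1(Q_n,\KM_1)=\CH^1(Q_n)$ and $y\in\CH^p(Q_n)$ (respectively $\CH^{p+1}(Q_n)$ when $n$ is odd) be the Chow generators of Theorem~\ref{CHquadric}, of bidegrees $(1,1)$ and $(p,p)$ (respectively $(p+1,p+1)$). The key observation is that the quantity ``weight minus cohomological degree'' vanishes on $x$ and $y$ and equals $t$ on a coefficient in $\KM_t(F)$; hence every polynomial in $x,y$ with integer coefficients lands on the diagonal, where multiplication is the intersection product of $\CH^\bullet(Q_n)$. Therefore each defining relation of the Chow ring (such as $x^{p+1}-2xy$ and $y^2-x^py$, or $x^{p+1}-2y$ in the odd case) holds verbatim in $\H^{\star,\bullet}(Q_n,\KM)$. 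Since the monomials in $x,y$ already constitute a module basis, there are no further relations and no multiplicative generators off the diagonal; extending scalars from $\ZZ=\KM_0(F)$ to $\H^{\star,\bullet}(F,\KM)$ produces exactly the three presentations claimed, with the weights of $x$ and $y$ as stated.

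The one step carrying genuine content is the freeness assertion of the second paragraph; everything afterwards is bookkeeping. The care required there is to confirm that orientability really does collapse the twist-dependent case analysis that complicated Theorem~\ref{thm:keyexactsequence}, so that the relevant localization sequences split and the ranks match the Chow groups in every bidegree. If one instead imports cellularity from \cite{DI:Hopf}, the remaining point is to check that the passage between $\KM$-cohomology and the Tate-motivic description is compatible with products, which is precisely where the comparisons of \cite{kerz} and \cite[Section~3]{RS} enter.
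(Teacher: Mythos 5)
Your proposal is correct and follows essentially the same route as the paper: the paper likewise establishes freeness of \(\H^{\star,\bullet}(Q_n,\KM)\) over \(\H^{\star,\bullet}(F,\KM)\) with generators in geometric bidegrees via localization and homotopy invariance (in the style of \cite[Proposition~4.1]{DI:Hopf}), and then imports the ring structure from the Chow ring by adapting \cite[Proposition~4.3]{DI:Hopf}. Your extra remarks --- the ``weight minus cohomological degree'' bookkeeping forcing all products of \(x\) and \(y\) onto the diagonal, and the alternative derivation of freeness from the Tate-motive decomposition of the cellular variety \(Q_n\) --- are just more explicit versions of what the paper leaves implicit.
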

\begin{proof}
  In the computation of the bigraded motivic cohomology ring of quadrics \cite[Proposition~4.3]{DI:Hopf} we can replace motivic cohomology with Milnor sheaf cohomology. To pass from the geometric bidegrees to the full bigraded ring, note that \(\H^{\star,\bullet}(Q_n, \KM)\) is a free module over \(\H^{\star,\bullet}(F, \KM)\) with generators in geometric bidegrees \((0,0)\), \((1,1)\), \ldots, \((n,n)\) when \(n\) is odd, and in geometric bidegrees \((0,0)\), \((1,1)\), \ldots, \((n,n)\) plus an extra generator in bidegree \((\frac{n}{2},\frac{n}{2})\) when \(n\) is even. This is proved by an analogous argument as in \cite[Proposition~4.1]{DI:Hopf}, using localization and homotopy invariance of bigraded Milnor cohomology.
\end{proof}

\begin{corollary}\label{thm:I-ring-without-twists}
  We have isomorphisms of graded commutative rings as follows:
  \[ 
    \H^{\star,\bullet}(Q_n, \Ib) \cong \left\{ 
      \begin{array}{llll}
        \H^{\star,\bullet}(F,\Ib)[\bar{\xi},\bar{\beta}]/(\bar{\xi}^{p+1}, \bar{\beta}^2 - \bar{\xi}^{p}\bar{\beta})     & \textnormal{if \(n=2p\) with \(p\) even}             \\
        \H^{\star,\bullet}(F,\Ib)[\bar{\xi},\bar{\beta}]/(\bar{\xi}^{p+1},\bar{\beta}^2)          & \textnormal{if \(n=2p+1\) or (\(n=2p\) with \(p\) odd)}
      \end{array}  
    \right. 
  \]
  Here, \(\deg{\bar{\xi}} = (1,1)\) and \(\deg{\bar{\beta}} = (q,q)\) where \(q = p\) if \(n=2p\) is even and \(q = p+1\) if \(n=2p+1\) is odd.
\end{corollary}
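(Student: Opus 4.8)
The plan is to obtain this presentation by reducing the Milnor cohomology computation of Corollary~\ref{coro:MilnorQ} modulo two. The bridge is the standard identification of Nisnevich sheaves $\Ib^j \cong \KM_j/2$ furnished by the solution of Milnor's conjecture; under it the coefficientwise reduction $\KM_j \to \KM_j/2$ becomes a morphism of graded sheaves of rings $\KM \to \Ib$, inducing a homomorphism of bigraded rings
\[
  \red\colon \H^{\star,\bullet}(Q_n, \KM) \longrightarrow \H^{\star,\bullet}(Q_n, \Ib).
\]
I would put $\bar\xi := \red(x)$ and $\bar\beta := \red(y)$, with $x,y$ the generators of Corollary~\ref{coro:MilnorQ}; their bidegrees are then $(1,1)$ and $(q,q)$ as claimed. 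Reducing the three families of relations of Corollary~\ref{coro:MilnorQ} modulo two kills every term carrying a factor of two, so $x^{p+1}-2xy \mapsto \bar\xi^{p+1}$, $x^{p+1}-2y \mapsto \bar\xi^{p+1}$, $y^2-x^p y \mapsto \bar\beta^2 - \bar\xi^p\bar\beta$ and $y^2 \mapsto \bar\beta^2$, which are exactly the relations in the statement.

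The engine of the argument is the same free module structure used for Corollary~\ref{coro:MilnorQ}. By localization and homotopy invariance, argued as in \cite[Proposition~4.1]{DI:Hopf}, the ring $\H^{\star,\bullet}(Q_n, \Ib)$ is a free module over the coefficient ring $\H^{\star,\bullet}(F, \Ib)$ with a basis lying in the geometric bidegrees $(0,0),(1,1),\dots,(n,n)$, together with one extra generator in bidegree $(\tfrac n2, \tfrac n2)$ when $n$ is even. The same argument exhibits $\H^{\star,\bullet}(Q_n, \KM)$ as a free $\H^{\star,\bullet}(F,\KM)$-module with generators in the identical bidegrees, and $\red$ sends the free generators of the source (classes of the relevant linear subspaces) to those of the target. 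Since the coefficientwise reduction $\H^{\star,\bullet}(F,\KM) \to \H^{\star,\bullet}(F,\Ib)$ is surjective — over a field the higher cohomology vanishes and $K^M_j(F) \twoheadrightarrow \FIb^j(F)$ — it follows that $\red$ is surjective.

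To conclude, let $A$ denote the bigraded $\H^{\star,\bullet}(F,\Ib)$-algebra on the right-hand side of the asserted isomorphism. Sending $\bar\xi,\bar\beta$ to the classes above defines an $\H^{\star,\bullet}(F,\Ib)$-algebra homomorphism $A \to \H^{\star,\bullet}(Q_n,\Ib)$, since the relations have just been checked to hold. A direct count of the monomials $\bar\xi^i$ and $\bar\xi^i\bar\beta$ surviving the relations shows $A$ is free over $\H^{\star,\bullet}(F,\Ib)$ with basis in precisely the geometric bidegrees listed above; this map carries that monomial basis bijectively onto the module basis of the target and is therefore an isomorphism. The step requiring the most care is the identification of the ring structure in the geometric bidegrees — that the diagonal $\bigoplus_i \H^i(Q_n,\Ib^i)$ is the mod-two Chow ring $\Ch^\bullet(Q_n)$ of Theorem~\ref{CHquadric} with its intersection product, via $\H^i(Q_n,\Ib^i)\cong\Ch^i(Q_n)$ and the compatibility of $\red$ with products. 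Once these standard facts are in hand, the remaining bookkeeping is routine.
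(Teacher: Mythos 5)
Your proposal is correct and follows essentially the same route as the paper: the paper's proof likewise invokes the Milnor conjecture to identify \(\Ib^\bullet\) with \(\KM_\bullet/2\) (citing \cite[Proposition~4.11]{fasel:chowwittring} for the ring identification \(\H^{\star,\bullet}(Q_n,\Ib)\cong\H^{\star,\bullet}(Q_n,\KM/2)\)) and then deduces the presentation from Corollary~\ref{coro:MilnorQ}. Your free-module counting argument simply makes explicit the step the paper leaves implicit in ``the result then follows''.
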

\begin{proof}
  By the Milnor conjecture, we can identify \(K^M_\bullet(F)/2\) with \(\Ib^\bullet(F)\) via the Pfister norm map, and we can further identify the ring \(\H^{\star,\bullet}(Q_n, \Ib)\) with \(\H^{\star,\bullet}(Q_n, \KM/2)\), cf.\ \cite[Proposition~4.11]{fasel:chowwittring}.
  The result then follows from Corollary~\ref{coro:MilnorQ}.
\end{proof}
\begin{remark}\label{rem:generators-of-DI}
  If \(n\) is odd, the generators \(\bar{\xi}\) and \(\bar{\beta}\) are uniquely defined by the explicit computation of \(\Ib\)-cohomology above, so there is no ambiguity in choosing the generators. If \(n\) is even, the generator \(\bar{\xi}\) is still uniquely defined, but  the generator \(\bar{\beta}\) has an ambiguity because \(\H^p(Q_{2p},\Ib^p ) \cong \ZZII \oplus \ZZII\). Consider the following short split exact sequence:
  \begin{equation*}
    \xymatrix{
      0 \ar[r] & \H^{0}(\PPpX,\Ib^{0}) \ar[r]^-{(\iota_x)_*} &\H^{p}(Q_{2p},\Ib^p) \ar[r]^-{(\iota_y)^*} 
      & \H^{p}(\PPpY,\Ib^p)    \ar[r] & 0 \\
      & & \H^0(\PP_{x'}^p,\Ib^{0}) \ar[u]^-{(\iota_{x'})_*} \ar[r]^-{(s_{x'})^*}_-{\cong}  & \H^{0}(F,\Ib^{0}) \ar[u]^-{(s_y)_*}_-{\cong}
    }
  \end{equation*}
  The subschemes \(\PP^p_y\) and \(\PP^p_{x'}\) of \(Q_{2p}\) intersect transversally in a point, so we have a cartesian square as in diagram \eqref{n2p:cartesian} below. The induced pullback and pushforward morphisms fit into a square as in the diagram above, and the base change formula shows that this square commutes.  We choose \(\bar{\beta}\) and \(\bar{\alpha}\) to be \((\iota_x)_*(1)\) and \((\iota_{x'})_*(1)\), respectively.  Note that the pushforward maps of \(\Ib\)-cohomology and \(\KM/2\)-cohomology are compatible via the norm map \cite[Lemma~10.4.4]{faselthesis}.  It follows that our generators \(\bar{\beta}\) and \(\bar{\alpha}\) map to the mod-\(2\)-reductions of Dugger-Isaksen's generators \(\alpha\) and \(\beta\) in \cite[Section~A.5]{DI:Hopf}, respectively. Therefore, we have also the relations \(\bar{\alpha}+\bar{\beta} = \bar{\xi}^p \), \(\bar{\beta}^2 = \bar{\xi}^p\bar{\beta}\) if \(p\) is even, and \(\bar{\beta}^2 = 0\) if \(p\) is odd in \(\Ib\)-cohomology, as in the proof of \cite[Theorem~A.10]{DI:Hopf}. 
\end{remark}
\subsection{The ring $\H^\tot(Q_n, \I)$}
In the following computations, we use the base change theorem for \(\I\)-cohomology (Theorem~\ref{thm:base-change-formula}) and the projection formula \cite[\S\,2.2]{fasel:ij} several times without explicitly mentioning them.  This requires even more care than in the additive case when dealing with twists by line bundles, so we will be even more explicit about these.

Let \(q = p\) if \(n=2p\) is even and \(q = p+1\) if \(n=2p+1\) is odd. Recall the split exact sequence
\begin{equation}\label{seq:keysequence-Q_2p}
  \begin{gathered} 
    \xymatrix{
      0 \ar[r] & \H^{i-q}(\PPpX,\I^{j-q},  \omega_x \otimes \Oo(l) ) \ar[r]^-{(\iota_x)_*} &\H^{i}(Q_n,\I^j, \Oo(l)) \ar[r]^-{\iota_y^*} 
      & \H^{i}( \PPpY,\I^j, \Oo(l))    \ar[r] & 0 
    }
  \end{gathered}
\end{equation}
with \(\omega_x \cong \Oo(1-q)\) of Theorem~\ref{thm:keyexactsequence}.  Note that, strictly speaking, we have identified $\Oo(l)$ with $(\iota_y)^*\Oo(l)$ in this exact sequence.  This should not cause any confusion: for any closed immersion $\iota\colon X \rightarrow Y$ of smooth closed subschemes of a fixed projective space, we have a \emph{canonical} isomorphism $\iota^*\Oo_Y(l) \cong \Oo_X(l)$.  We will make this simplification of notation throughout this section whenever possible, without further comment.

\begin{theorem}\label{thm:I-ring}
  Let \(F\) be a field of characteristic \(\neq 2\).
  For \(p \geq 1 \), we have a \(\ZZ \oplus \ZZ \oplus \ZZII\)-graded ring 
  (and even \(\II\)-algebra) isomorphism
  \[\H^\tot(Q_{2p+1}, \I)  \cong 
    \II[\xi,\alpha,\beta]/(\II'\xi,\xi^{p+1},\xi\alpha,\alpha^2,\beta^2)
  \]
  Here, the generators are of degrees
  \(\deg{\xi}    = \twistedtrideg{1}{1}{1}\),
  \(\deg{\alpha} = \twistedtrideg{p}{p}{p-1}\), and
  \(\deg{\beta}  = \twistedtrideg{p+1}{p+1}{p}\). The reduction map \(	\H^\tot(Q_{2p+1}, \I) \rightarrow \H^{\star,\bullet}(Q_{2p+1}, \Ib),\)
  which collapses the \(\ZZII\)-grading, is given by \(\xi \mapsto \bar{\xi}\), \( \alpha\mapsto  \bar{\xi}^p \) and \(\beta \mapsto \bar{\beta}\).
  
\end{theorem}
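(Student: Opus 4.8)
The plan is to follow the template of the projective-space computation (Theorem~\ref{alternativeproof}): exhibit explicit generators, verify the defining relations, and finally match the resulting quotient against the additive structure of Theorem~\ref{thm:I-additive}. Throughout I would exploit the ring homomorphism \(\iota_y^*\colon \H^\tot(Q_{2p+1},\I)\to\H^\tot(\PPpY,\I)=\H^\tot(\PP^p,\I)\). By the split exact sequence of Theorem~\ref{thm:keyexactsequence}, its kernel in cohomological degree \(i\) is the image of \((\iota_x)_*\), which vanishes for \(i\le p\); hence \(\iota_y^*\) is injective up to degree \(p\), so relations among \(\xi\) and \(\alpha\) can be read off from \(\PP^p\). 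I would set \(\xi:=\lambda^1_\Oo(1)\in\H^1(Q_{2p+1},\I^1,\Oo(-1))\), with powers \(\xi^m=\lambda^m_\Oo(1)\) identified with the additive generators exactly as on \(\PP^p\) (using Lemma~\ref{lemma:twisthomo} and the injectivity of \(\iota_y^*\) for \(m\le p\)); \(\beta:=(\iota_x)_*(1)\in\H^{p+1}(Q_{2p+1},\I^{p+1},\Oo(p))\), the pushforward of the fundamental class of \(\PPpX\) (here \(\omega_x\cong\Oo(-p)\) by Lemma~\ref{lem:normal-bundles-in-Q}); and \(\alpha\in\H^p(Q_{2p+1},\I^p,\Oo(p-1))\cong\W(F)\) the unique class with \(\iota_y^*\alpha=\alpha_{\PP^p}\), where \(\alpha_{\PP^p}=(s_y)_*(1)\) is Fasel's point class.

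The reduction values are immediate: \(\red\xi=\bar\xi\) (Fasel's identity \(\red\circ\mu^1=\) multiplication by \(\bar\xi\), transported by Lemma~\ref{lemma:twisthomo}), \(\red\beta=\bar\beta\) (compatibility of pushforwards with reduction, as in Remark~\ref{rem:generators-of-DI}), and \(\red\alpha=\bar\xi^p\) (since \(\iota_y^*\red\alpha=\bar\xi_{\PP^p}^{\,p}\) and \(\iota_y^*\) is injective on \(\Ib\)-cohomology in degree \(p\)). Several relations then cost little. The identity \(\II'\xi=0\) is proved as on \(\PP^p\): \(\phi\xi^m\) is detected after \(\iota_y^*\), and \(\phi\,\xi_{\PP^p}^{\,m}=0\) iff \(\bar\phi=0\) by Theorem~\ref{alternativeproof}. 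One has \(\xi^{p+1}=0\) because the relevant group vanishes by \eqref{eq:I-cohomology-of-P-over-field}, and \(\alpha^2=0\) because its target \(\H^{2p}(Q_{2p+1},\I^{2p})\) is already zero additively by Theorem~\ref{thm:I-additive}. Finally \(\beta^2=0\): by the projection formula and the base-change formula (Theorem~\ref{thm:base-change-formula}), \(\beta^2=(\iota_x)_*(\iota_x^*\beta)\) is the pushforward of the Euler class of the normal bundle of \(\PPpX\), which lives in \(\H^{p+1}(\PP^p,\dots)=0\).

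The genuinely nontrivial relation, and the main obstacle, is \(\xi\alpha=0\). Since \(\xi_{\PP^p}\alpha_{\PP^p}=0\) on \(\PP^p\), we get \(\iota_y^*(\xi\alpha)=0\), so \(\xi\alpha\in\ker\iota_y^*=\operatorname{im}(\iota_x)_*\); in degree \(p+1\) this image is exactly \(\W(F)\beta\), whence \(\xi\alpha=c\beta\) for some \(c\in\W(F)\). Applying \(\red\) only yields \(c\in\FI(F)\), and this is as far as restriction arguments reach, because \(c\beta\) lies in the kernel of every map to a linear subspace. The trick I would use to pin down \(c\) is to multiply by \(\alpha\) and invoke \(\alpha^2=0\): \(0=\xi\alpha^2=(\xi\alpha)\alpha=c\,\alpha\beta\). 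It then suffices to know that \(\alpha\beta\) is a free generator of the top group \(\H^{2p+1}(Q_{2p+1},\I^{2p+1},\Oo(2p-1))\cong\W(F)\). This follows either from Poincaré duality — the cup-product pairing of the two rank-one free \(\W(F)\)-modules in complementary degrees is perfect, so \(\alpha\beta\) is a unit multiple of the generator — or directly: \(\alpha\beta=(\iota_x)_*(\iota_x^*\alpha)\), where \(\iota_x^*\alpha\) generates \(\H^p(\PPpX,\I^p,\Oo(p-1))\cong\W(F)\) (its reduction \(\bar\xi_{\PP^p}^{\,p}\) is nonzero), and pushing the point class forward along \(\iota_x\) gives the class of a rational point of \(Q_{2p+1}\), a generator of the top cohomology. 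As \(\alpha\beta\) is a free generator, \(c\,\alpha\beta=0\) forces \(c=0\), so \(\xi\alpha=0\).

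It remains to check that the relations are complete, i.e.\ that the evident \(\II\)-algebra map out of \(\II[\xi,\alpha,\beta]/(\II'\xi,\xi^{p+1},\xi\alpha,\alpha^2,\beta^2)\) is an isomorphism. This is bookkeeping: the quotient has the \(\II\)-module basis \(\{\xi^i\}_{0\le i\le p}\), \(\{\xi^i\beta\}_{0\le i\le p}\), \(\alpha\), and \(\alpha\beta\), with \(\II'\) annihilating the classes carrying a positive power of \(\xi\). One verifies that these land in the summands of Theorem~\ref{thm:I-additive} in the correct tridegrees — the twists \(\overline{p-1}\) of \(\alpha\) and \(\bar p\) of \(\xi^p\) keep them in distinct graded pieces, and likewise \(\alpha\beta\) (twist \(\bar 1\)) is separated from \(\xi^p\beta\) (twist \(\bar 0\)). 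Matching bases degree by degree then yields the claimed ring isomorphism, together with the stated reduction map.
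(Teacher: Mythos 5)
Your overall strategy coincides with the paper's: read the additive structure off Theorems~\ref{thm:keyexactsequence} and \ref{thm:I-additive}, choose essentially the same generators \(\xi\), \(\alpha\), \(\beta\), obtain \(\II'\xi\) and \(\xi^{p+1}\) by restriction to \(\PPpY\) and reduction to \(\Ib\)-cohomology, and kill \(\alpha^2\) and \(\beta^2\) because the receiving groups vanish. Where you genuinely diverge is the relation \(\xi\alpha=0\). The paper dispatches this in one line ``by considering reduction'', but the group \(\H^{p+1}(Q_{2p+1},\I^{p+1},\Oo(p))\cong\W(F)\) containing \(\xi\alpha\) reduces onto \(\H^{p+1}(Q_{2p+1},\Ib^{p+1})\cong\ZZII\) with kernel \(\FI(F)\beta\), so reduction alone only gives \(\xi\alpha=c\beta\) with \(c\in\FI(F)\) --- exactly the point you identify. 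Your device of multiplying by \(\alpha\), using \(\alpha^2=0\), and exploiting that \(\alpha\beta\) spans a free rank-one \(\W(F)\)-module in the top degree is a legitimate and more explicit way to force \(c=0\); it is not what the paper writes, and it supplies a detail the paper leaves implicit.

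One step of your argument needs repair, however. To see that \(\alpha\beta=(\iota_x)_*(\iota_x^*\alpha\cup 1_x)\) is a free generator, you claim that \(\iota_x^*\alpha\) generates \(\H^p(\PPpX,\I^p,\Oo(p-1))\cong\W(F)\) ``because its reduction \(\bar\xi^p\) is nonzero''. That implication fails for \(\W(F)\)-modules: an element with nonzero image in \(\W(F)/\FI(F)\) has odd rank but need not be a unit (e.g.\ \(\langle 1,1,1\rangle\in\W(\QQ)\)), so nonzero mod-\(2\) reduction does not make it a generator. The fix is cheap: the localization sequence with the roles of \(\PPpX\) and \(\PPpY\) interchanged shows that \(\iota_x^*\), like \(\iota_y^*\), is an isomorphism on \(\H^p(Q_{2p+1},\I^p,\Oo(p-1))\), hence carries the generator \(\alpha\) to a generator; alternatively, note that \(\FI(F)\xi=0\) already gives \(c\cdot\FI(F)=0\), which together with \(cu=0\) for \(u\) of odd rank yields \(c=0\) without knowing \(u\) is a unit. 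Your alternative appeal to Poincar\'e duality for \(\I\)-cohomology should be dropped, as no such duality is available in the paper's framework. Finally, the concluding ``bookkeeping'' silently requires that \(\xi^i\beta\) generates its group for \(1\le i\le p\); this is where the paper's observation that the reduction map is an isomorphism in degrees \(p+2\le i\le 2p+1\) (with the appropriate twist) enters, and you should invoke it explicitly rather than only matching tridegrees.
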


\begin{proof}
  From the split exact sequence \eqref{seq:keysequence-Q_2p} for \(n=2p+1\), or by specializing Theorem~\ref{thm:I-additive} to the case \(S=\Spec(F)\), we obtain:
  \[ 
    \H^i(Q_{2p+1}, \I^j, \Oo(l)) = \left\{  
      \begin{array}{lll}
	\FIb^{j-i}(F) & \textnormal{if \( 1\leq i \leq p\) and \(l\equiv i\mod 2\) } \\
	\FIb^{j-i}(F) & \textnormal{if \( p+2\leq i \leq 2p+1\) and \(l\not\equiv i\mod 2\)} \\
	\FI^{j-i}(F)  & \textnormal{if (\(i=p\) or \(i=p+1\)) and \(l \not\equiv i \mod 2\)} \\
	\FI^{j-i}(F) & \textnormal{if (\(i=0 \) or \(i=2p+1\)) and \(l\equiv i \mod 2\)} \\
	0 & \textnormal{otherwise}
      \end{array} 
    \right.
  \] 
  
  We claim that the reduction map \(\H^i(Q_n, \I^j, \Oo(l)) \rightarrow \H^i(Q_n, \Ib^j) \cong \FIb^{j-i}(F) \)
  is an isomorphism for \( 1\leq i \leq p\) and \(l \equiv i \mod 2\), and also for \( p+2\leq i \leq 2p+1\) and \(l \not \equiv i \mod 2\). 
  Indeed, in the first case, it follows from degree considerations and \eqref{eq:I-cohomology-of-P-over-field} that the map \(\iota_y^*\) in sequence~\eqref{seq:keysequence-Q_2p} is an isomorphism; in the second case, the map \((\iota_x)_*\) in this sequence is an isomorphism for degree reasons again. Consider the following commutative diagrams:
  \[
    \xymatrix{ \H^i(Q_n,\I^j, \Oo(l)) \ar[r] \ar[d]^-{\iota_x^*} & \H^i(Q_n,\Ib^j) \ar[d]^-{\iota_x^*}     &&     \H^{i-p-1}(\PPpY,\I^{j-p-1}, \omega_y \otimes \Oo(l))  \ar[d]^-{(\iota_y)_*} \ar[r] & \H^{i-p-1}(\PPpY,\Ib^{j-p-1}) \ar[d]^-{(\iota_y)_*}    \\
      \H^i(\PPpX,\I^j, \Oo(l)) \ar[r] &  \H^i(\PPpX,\Ib^j)                                      &&    \H^i(Q_n,\I^j, \Oo(l)) \ar[r]  & \H^i(Q_n,\Ib^j)   
    } 
  \]
  Using Fasel's additive computations for projective spaces \eqref{eq:I-cohomology-of-P-over-field} once again, we see that the lower arrow of the left square is an isomorphism for \( 1\leq i \leq p\) and \(l\equiv i \mod 2\), and that the upper arrow of the right square is an isomorphism for \( p+2\leq i \leq 2p+1\) and \(l\not\equiv i \mod 2\). The claim follows.
  
  Let $\xi\in \H^1(Q_n, \I^1, \Oo(1)) \cong  \ZZII $ be the unique generator, and choose generators $\alpha$ and $\beta$ of  $\H^p(Q_n, \I^p, \Oo(p-1)) \cong \W(F)$ and $\H^{p+1}(Q_n, \I^{p+1}, \Oo(p)) \cong \W(F)$, respectively. We see that the reduction map sends \(\xi \) to \( \bar{\xi}\), \( \alpha\) to \(  \bar{\xi}^p \) and \(\beta \) to \( \bar{\beta}\). The relations \(\II'\xi\),  \(\xi^{p+1}\) and \(\xi\alpha\) in the ring structure follow from the additive results and by considering reduction to \(\H^{\star,\bullet}(Q_n, \Ib)\). The relation \( \beta^2 = 0 \) is clear for degree reasons (\(\H^{2p+2}(Q_{2p+1}, \I^{2p+2}, \Oo(l)) =0\) since \(2p+2 > \dim Q\)), and similarly \(\alpha^2 = 0\) since \(\H^{2p}(Q_{2p+1}, \I^{2p}, \Oo(2p-2)) = 0\).
\end{proof}

We now consider the even-dimensional split quadrics over \(F\), i.e.\ \(Q_n\) with \(n=2p\).  In this case \(p=q\) in sequence~\eqref{seq:keysequence-Q_2p}. We choose the following additive generators:
\begin{itemize}
\item
  Let \(\xi \in \H^1(Q_{2p}, \I^1, \Oo(-1)) \cong  \FIb^0(F) \cong \ZZII \) be the unique generator.
\item
  Let \(\beta \in \H^{p}(Q_{2p}, \I^{p}, \Oo(p-1)) \) be the image of \(1 = \langle 1\rangle \in \W(F)\) under the following morphism of \(\W(F)\)-modules:
  \[   
    \W(F) \xrightarrow{p^*} \H^0(\PPpX,\I^0) \xrightarrow{t_x} \H^{0}(\PPpX,\I^{0}, \omega_x \otimes \Oo(p-1))  \xrightarrow{(\iota_x)_*}
    \H^{p}(Q_{2p}, \I^{p}, \Oo(p-1))
  \]
  The second map is defined by an isomorphism  \( \Oo \rightarrow \omega_x\otimes \Oo(p-1)\). We will write \(1_x\) for the element \(1_x := t_x p^*1 \in \H^{0}(\PPpX,\I^{0}, \omega_x \otimes \Oo(p-1))\), and similarly for the other embeddings \(\iota_{x'}\), \(\iota_y\), etc.
\item
  Let \(\alpha \in \H^{p}(Q_{2p}, \I^{p}, \Oo(p-1))\) similarly be defined as
  \(\alpha := (\iota_{x'})_* (1_{x'}) = (\iota_{x'})_*  t_{x'}  p_{x'}^*(1)\):
  \[   
    \W(F) \xrightarrow{p^*} \H^0(\PP^p_{x'},\I^0) \xrightarrow{t_{x'}} \H^{0}(\PP^p_{x'},\I^{0}, \omega_{x'} \otimes \Oo(p-1))  \xrightarrow{(\iota_{x'})_*}
    \H^{p}(Q_{2p}, \I^{p}, \Oo(p-1)), 
  \]
  where \(t_{x'} \) is defined by an isomorphism \( \Oo \rightarrow \omega_{x'}\otimes \Oo(p-1)\). 
\end{itemize}
We will choose the isomorphisms defining \(t_x\) and \(t_x'\) more carefully in the proof of the following theorem.  However, these choices have no effect on the final result.

\begin{theorem}\label{thm:I-ring-2}
  Let \(F\) be a field of characteristic \(\neq 2\). For any \(p\geq 2\), we have a \(\ZZ \oplus \ZZ \oplus \ZZII\)-graded ring 
  (and even \(\II\)-algebra) isomorphism
  \[
    \H^\tot(Q_{2p}, \I)  \cong  
    \begin{cases}
      \II[\xi,\alpha,\beta]/(\II'\xi,\xi^{p+1},\xi\alpha+\xi\beta,\alpha^2 + \beta^2,\alpha\beta) & \text{ if \(p\) is even} \\
      \II[\xi,\alpha,\beta]/(\II'\xi,\xi^{p+1},\xi\alpha+\xi\beta,\alpha^2,\beta^2) & \text{ if \(p\) is odd} 
    \end{cases}
  \]
  As can be seen from the explicit definitions above, the generators here have degrees
  \(\deg{\xi}    = \twistedtrideg{1}{1}{1}\) and 
  \(\deg{\alpha} = \deg{\beta}    = \twistedtrideg{p}{p}{p-1}\).
  The reduction map 
  \(\H^\tot(Q_{2p}, \I) \rightarrow \H^{\star,\bullet}(Q_{2p}, \Ib),\) 
  which collapses the \(\ZZII\)-grading, is given by
  \(\xi \mapsto \bar{\xi}\),
  \(\beta \mapsto \bar{\beta}\)
  and 
  \(\alpha \mapsto  \bar{\xi}^p - \bar{\beta}\).
\end{theorem}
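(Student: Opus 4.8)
The plan is to upgrade the additive calculation to a ring isomorphism in three moves: record the graded $\II$-module structure, verify the stated relations among the chosen generators $\xi,\alpha,\beta$, and check that these relations are complete by matching ranks against the additive answer. First I would specialise Theorem~\ref{thm:I-additive} (equivalently the split sequence~\eqref{seq:keysequence-Q_2p}) to $S=\Spec(F)$, exactly as in the proof of Theorem~\ref{thm:I-ring}, to list all groups $\H^i(Q_{2p},\I^j,\Oo(l))$. Two features get used repeatedly: the group $\H^p(Q_{2p},\I^p,\Oo(p-1))$ is free of rank two over $\W(F)$ with basis $\{\alpha,\beta\}$ (their reductions $\bar\beta$ and $\bar\xi^p-\bar\beta$ are independent in $\H^p(Q_{2p},\Ib^p)$, by Remark~\ref{rem:generators-of-DI}), and the top group $\H^{2p}(Q_{2p},\I^{2p})\cong\W(F)$ is free of rank one on a class $\gamma$ given by the pushforward of $1$ along a rational point. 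The reduction formulas $\xi\mapsto\bar\xi$, $\beta\mapsto\bar\beta$, $\alpha\mapsto\bar\xi^p-\bar\beta$ then follow from compatibility of pushforward with mod-$2$ reduction together with Remark~\ref{rem:generators-of-DI}.

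The relations $\II'\xi=0$, $\xi^{p+1}=0$ and $\xi\alpha+\xi\beta=0$ I would dispatch exactly as in Theorems~\ref{alternativeproof} and~\ref{thm:I-ring}: since $\xi$ is built from a Bockstein followed by cup products with an Euler class, multiplication by $\xi$ factors through $\Ib$-cohomology, which kills $\II'$ and forces $\xi^{p+1}=0$; and $\xi(\alpha+\beta)$ reduces to $\bar\xi\cdot\bar\xi^p=\bar\xi^{p+1}=0$, so it vanishes by the same reduction argument once one notes from the additive list that reduction is injective in the relevant tridegree. All of this is formal and short.

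The genuine content, and the main obstacle, is the three top-degree products $\alpha^2,\beta^2,\alpha\beta\in\H^{2p}(Q_{2p},\I^{2p})\cong\W(F)$. Reduction to $\Ib$-cohomology only pins these down modulo the fundamental ideal $\FI(F)$, so a true Witt-theoretic computation is required. Here I would exploit the geometry of the four linear subspaces via the projection and base change (Theorem~\ref{thm:base-change-formula}) formulas. The incidences are: $\PPpX\cap\PP^p_{y'}$ and $\PPpY\cap\PP^p_{x'}$ are reduced points met transversally; $\PPpX\cap\PPpY$ and $\PP^p_{x'}\cap\PP^p_{y'}$ are empty; while $\PPpX\cap\PP^p_{x'}$ and $\PPpY\cap\PP^p_{y'}$ are copies of $\PP^{p-1}$ (excess). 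The self-intersection $\beta^2=(\iota_x)_*(e\cdot 1_x)$ is governed by the $\I$-theoretic Euler number of the rank-$p$ normal bundle $\sheaf{N}_{\PPpX/Q_{2p}}\cong\Omega^1_{\PP^p}(2)$ (whose determinant is $\Oo(p-1)$, matching Lemma~\ref{lem:normal-bundles-in-Q}). Since the Euler number of any odd-rank bundle is hyperbolic and hence vanishes in $\W(F)$, we get $\alpha^2=\beta^2=0$ when $p$ is odd; when $p$ is even, the nonzero mod-$2$ reduction computed above forces this Euler number to be a unit. For the mixed product I would replace $\alpha=[\PP^p_{x'}]$ by the class of a $p$-plane in the same ruling that is disjoint from, respectively transverse to, $\PPpX$ (the two classes agree up to a unit because each ruling is connected, by a specialisation argument), turning $\alpha\beta$ into a base change along an empty intersection ($\alpha\beta=0$ for $p$ even) or along a transverse point ($\alpha\beta$ a generator for $p$ odd).

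The point demanding the most care is the \emph{relative orientation} of the two rulings, encoded in the choices of trivialisations $t_x$ and $t_{x'}$ flagged before the theorem; this is where the determinant factors $\det\Omega^\vee$ in the base change formula enter. With the symmetric choice the two self-intersections carry opposite signs, $\alpha^2=-\beta^2$, so that for $p$ even $\alpha^2+\beta^2=(\langle u\rangle+\langle -u\rangle)\gamma=0$ is precisely the vanishing of the hyperbolic form in $\W(F)$; the same bookkeeping yields $\alpha\beta=0$. Tracking these signs through the normal-bundle determinants is the heart of the argument. Once all relations are in hand, the chosen generators define a surjective $\II$-algebra homomorphism from the presented ring onto $\H^\tot(Q_{2p},\I)$; comparing source and target in each tridegree against the additive answer of Theorem~\ref{thm:I-additive} shows it is an isomorphism, which simultaneously confirms that no further relations are needed.
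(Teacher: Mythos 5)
Your overall architecture matches the paper's: specialise the additive result to \(S=\Spec(F)\), read off the reductions of \(\xi,\alpha,\beta\) and the relations \(\II'\xi\), \(\xi^{p+1}\), \(\xi\alpha+\xi\beta\) from injectivity of mod-\(2\) reduction in the relevant tridegrees, and compute the top-degree products by moving each \(p\)-plane within its ruling to one meeting the other plane either transversally in a point or not at all, then applying the base change and projection formulas. Your incidence combinatorics are correct, and \(\sheaf N_{\PPpX/Q_{2p}}\cong\Omega^1_{\PP^p}(2)\) is consistent with Lemma~\ref{lem:normal-bundles-in-Q}. Two caveats before the main issue. First, your route to the self-intersections invokes a self-intersection formula \(\iota_x^*(\iota_x)_*=e(\sheaf N)\cup(-)\) and the claim that odd-rank Euler numbers are ``hyperbolic, hence zero in \(\W(F)\)''; neither is set up in the paper (Theorem~\ref{thm:base-change-formula} covers only the transverse case), and the elementary \(-\id_E\) argument only shows \((\langle 1\rangle-\langle -1\rangle)\,e(\sheaf N)=0\), i.e.\ that the Euler number is \(2\)-torsion in \(\W(F)\), which is not automatically zero. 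The paper sidesteps this: for \(p\) odd it gets \(\beta^2=0\) from \(\iota_x^*(\iota_y)_*1_y=0\) (disjointness) after moving \(\PPpX\) to \(\PPpY\) by an explicit \(\AA^1\)-homotopy — an argument already available inside your own framework. Second, your ``the two classes agree up to a unit because each ruling is connected'' needs the careful bookkeeping of the trivialisations \(t_x,t_{x'},\dots\) of the twisting line bundles that the paper carries out; this is where the units live.

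The genuine gap is the relation \(\alpha^2+\beta^2=0\) for \(p\) even. Mod-\(2\) reduction determines \(\alpha^2\) and \(\beta^2\) only up to \(\FI(F)\), and — unlike \(\alpha\beta\), or the question of whether \(\alpha^2\) generates — this relation cannot be normalised away by rescaling the generators, since replacing \(\beta\) by \(\langle u\rangle\beta\) leaves \(\beta^2=\langle u^2\rangle\beta^2\) unchanged. You correctly identify this as ``the heart of the argument'' but then only assert that ``with the symmetric choice the two self-intersections carry opposite signs''. No mechanism is offered for comparing the two orientations: after the moving step, \(\alpha^2\) and \(\beta^2\) are supported at two \emph{different} transverse points (\(\PP^p_{x'}\cap\PPpY\) and \(\PPpX\cap\PP^p_{y'}\)), so there is no single normal-bundle determinant from which to read off a relative sign. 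The paper closes this with the involution \(\tau\) of \(Q_{2p}\) swapping \(x_0\) and \(y_0\): it interchanges the two homotopies, so that after choosing the trivialisations \(t,\tilde t\) compatibly with \(\Omega_{\tau}\) one gets \(\tau^*\alpha^2=\beta^2\) from base change; and \(\tau^*=-\id\) on \(\H^{2p}(Q_{2p},\I^{2p},\Oo(2p-2))\) because, after pushing forward along \(Q_{2p}\hookrightarrow\PP^{2p+1}\) (an isomorphism on this group), \(\tau\) is a linear automorphism of determinant \(-1\), which acts on the pushforward from a fixed rational point by \(\langle -1\rangle\). Some argument of this kind — an automorphism exchanging the two configurations whose sign on top-degree \(\I\)-cohomology can be computed — is required; without it you have only established \(\alpha^2+\langle u\rangle\beta^2=0\) for an undetermined unit \(u\).
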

\begin{proof}
  From the split exact sequence \eqref{seq:keysequence-Q_2p} or by specializing Theorem~\ref{thm:I-additive},  we obtain:
  \[ 
    \H^i(Q_{2p}, \I^j, \Oo(l)) = \left\{  
      \begin{array}{lll}
	\FIb^{j-i}(F) & \textnormal{if \( 1\leq i \leq p\) and \(i\equiv l \mod 2\) } \\
	\FIb^{j-i}(F)  & \textnormal{if \( p+1\leq i \leq 2p\) and \(i \not\equiv l \mod 2\)} \\
	\FI^{j-i}(F) \oplus \FI^{j-i}(F) & \textnormal{if \(i=p\) and \(i \not\equiv l \mod 2\)} \\
	\FI^{j-i}(F) & \textnormal{if (\(i=0\) or \(i=2p\)) and \(l\equiv i\)} \\
	0 & \textnormal{otherwise}
      \end{array} 
    \right.
  \]
  
  Analogously to the case when \(n = 2p+1\) is odd, we note that the reduction map \(\H^i(Q_{2p}, \I^j, \Oo(l)) \rightarrow \H^i(Q_{2p}, \Ib^j) \cong \FIb^{j-i}(F) \)
  is an isomorphism for \( 1\leq i \leq p\) and \(l \equiv i\), and also for \( p+1\leq i \leq 2p\) and \(l \not \equiv i\). The reduction map sends \(\xi\) to \(\bar{\xi}\), \(\alpha\) to \(\bar{\xi}^p - \bar{\beta}\) and \(\beta\) to \(\bar{\beta}\). 
  
  Let us now compute the ring structure of \(\H^\tot(Q_{2p},\I)\).  The relations \(\II'\xi,\xi^{p+1} \) and \(\xi(\alpha+\beta)\) follow by using the reduction map as before. 
  
  Let \(s_{x}\), \(s_y\),  \(s_{x'}\) and \(s_{y'}\) denote the inclusions of a point into \(\PP^p\) determined by the following cartesian diagrams:
  \begin{equation}\label{n2p:cartesian}
    \begin{aligned}
      \xymatrix{\Spec(F) \ar[d]^-{s_{x'}} \ar[r]^-{s_{y}} & \PPpY  \ar[d]^-{\iota_y} && \Spec(F) \ar[d]^-{s_{y'}} \ar[r]^-{s_{x}} & \PPpX  \ar[d]^-{\iota_{x}} \\
        \PP^p_{x'} \ar[r]^-{\iota_{x'}} & Q_{2p} && 	\PP^p_{y'} \ar[r]^-{\iota_{y'}} & Q_{2p}
      }  
    \end{aligned}
  \end{equation}
  Let \(s\) and \(\tilde{s}\) denote the rational points \(\Spec(F) \rightarrow Q_{2p}\) defined by the diagonal of the left and right diagram, respectively.	
  For \(i=p\) and \(l = 1-p\), we can use the maps in the left cartesian diagram to make the splitting in Theorem~\ref{thm:keyexactsequence} explicit:
  \begin{equation*}
    \xymatrix{
      0 \ar[r] & \H^{0}(\PPpX,\I^{j-p}) \ar[r]^-{(\iota_x)_*} &\H^{p}(Q_{2p},\I^j, \Oo(p-1)) \ar[r]^-{(\iota_y)^*} 
      & \H^{p}(\PPpY,\I^j, \Oo(p-1))    \ar[r] & 0 \\
      & & \H^0(\PP_{x'}^p,\I^{j-p}) \ar[u]^-{(\iota_{x'})_*} \ar[r]^-{(s_{x'})^*}_-{\cong}  & \H^{0}(F,\I^{j-p}) \ar[u]^-{(s_y)_*}_-{\cong}
    }
  \end{equation*}
  It follows that \(\alpha\) and \(\beta\) form a basis of the degree \(p\) part \( \H^{p}(Q_{2p}, \I^{p}, \Oo(p-1)) \) considered as a rank two free \(\W(F)\)-module. 
  In order to compute the various products of the generators \(\alpha\) and \(\beta\),
  let us consider a general element
  \[
    (a, b) \in \H^0(\PP^p_{x'}, \I^0, \omega_{x'} \otimes  \Oo(p-1)) \times \H^0(\PPpY, \I^0, \omega_{y} \otimes  \Oo(p-1)).
  \]
  Using the base change formula \ref{thm:base-change-formula} for the left cartesian diagram in  \eqref{n2p:cartesian} and the projection formula, we find:
  \begin{equation}\label{n=2p:cd1} 
    \begin{array}{llll}
      (\iota_{x'})_* a \cup (\iota_{y})_*  b 
      & = & (\iota_{x'})_* \big( a \cup \iota_{x'}^* (\iota_{y})_*b  \big)
      % &     \textnormal{by projection formula [Calmes-Fasel17, Corollary~3.5]} 
      \\
      & = & (\iota_{x'})_* \big( a \cup  (s_{x'})_* s_y^* b   \big)
      % &    \textnormal{by base change [Asok-Fasel16, Theorem~2.12]}
      \\
      & = &   (\iota_{x'})_*(s_{x'})_* \big( s_{x'}^* a \cup s_{y}^* b   \big)
      % &    \textnormal{by projection formula [Calmes-Fasel17, Remark 3.6]}
      \\
      & = & s_*( s_{x'}^* a \cup s_{y}^* b)
      % &\textnormal{by [FaselThesis, Proposition~5.3.8]} 
    \end{array}
  \end{equation}
  Similarly, using the right diagram in \eqref{n2p:cartesian}, we find
  \[
    (\iota_{x})_* a \cup (\iota_{y'})_*  b  = \tilde{s}_*(s_{x}^* a \cup s_{y'}^* b).
  \] 
  
  \emph{When \(n=2p\) with odd \(p\)}, we consider the \(\AA^1\)-homotopy 
  \(
  h\colon \PP^p \times \mathbb{A}^1 \rightarrow Q_{2p} \times \mathbb{A}^1
  \)
  given by 
  \[ 
    h([a_0,a_1, \ldots, a_p],t) := ([ ta_1,  -t a_0, \ldots, ta_p, -t a_{p-1}; (1-t)a_0, (1-t)a_1, \ldots, (1-t)a_{p-1}, (1-t)a_p   ], t).
  \]
  This homotopy fits into the following commutative diagram consisting of two cartesian squares:
  \begin{equation}\label{diag:homotopy}
    \begin{aligned}
      \xymatrix{
        *+{\PP^p} \ar[r]^{h_0} \ar_{\id\times 0}[d] & Q_{2p}  \ar^{\id\times 0}[d] \\
        \PP^p\times \AA^1 \ar[r]^{h} & Q_{2p} \times \AA^1 \\
        *+{\PP^p}\ar[r]_{h_1} \ar^{\id\times 1}[u] & Q_{2p}  \ar_{\id\times 1}[u]
      }
    \end{aligned}
  \end{equation}
  Note that \(h_0 = \iota_y\) and \(h_1 = \iota_x \phi\) for some linear change of coordinates \(\phi\colon \PP^p\to\PP^p\). Using the base change formula and homotopy invariance, we find that the pushforward maps along \(h_0\) and \(h_1\) coincide up to an identification of the respective normal bundles.  Thus, up to an identification of the respective normal bundles, \((\iota_x)_*\) and \((\iota_y)_*\) also agree up to some unit in \(\H^0(\PP^p,\I^0)\cong \W(F)\) depending on \(\phi\).  To be more precise, choose an isomorphism \(t_h\colon \Oo \rightarrow \omega_h\otimes \Oo(1-p)\).  Define \(t_i:= (\id \times i)^* t_h\) for \(i \in \{0,1\}\), and choose the isomorphisms \(t_x\) and \(t_y\) in the definitions of \(\alpha\) and \(\beta\) above Theorem~\ref{thm:I-ring-2} such that \(t_0 = t_y\) and \(t_1 = \phi^*t_x\). Then  \((\iota_x)_*t_x\) and \((\iota_y)_*t_y\) agree up to multiplication with a unit in \(\W(F)\).  
  
  By combining the identification of \((\iota_x)_*t_x\) with \((\iota_y)_*t_y\) and formula~\eqref{n=2p:cd1}, we find that \(\alpha \beta\) is a generator in degree \(2p\). (Note that  \(s_*\) in \eqref{n=2p:cd1} is an isomorphism by the previous additive computations.) The following identities moreover prove the relation \(\alpha^2 =0\). We write ``\(\sim\)'' for ``equal up to multiplication by a unit'':
  \begin{equation}\label{n=2p;local=0}  
    \begin{array}{llll}
      (\iota_x)_* 1_x \cup (\iota_x)_* 1_x &\sim& (\iota_x)_* 1_x \cup (\iota_y)_* 1_y  & \textnormal{by the \(\mathbb{A}^1\)-homotopy} \\
                                           &=& (\iota_x)_* (1_x \cup \iota_x^* (\iota_y)_* 1_y) & \textnormal{by the projection formula}\\
                                           &=& 0  & \textnormal{since \(\iota_x^* (\iota_y)_* 1_y = 0\) by sequence~\eqref{seq:keysequence-Q_2p}}
    \end{array}  
  \end{equation}
  Similarly, we obtain the relation \(\beta^2 = 0\). 
  
  \medskip
  
  \emph{When \(n=2p\) with even \(p\),}  we consider the following two  \(\AA^1\)-homotopies \(\PP^p \times \AA^1 \rightarrow Q_{2p}\times \AA^1\):
  \begin{align*}
    h([a_0,a_1, \ldots, a_p], t) & := ([ a_0, ta_1,  ta_2, \ldots,  ta_p ; 0,\;  (1-t)a_2,  (t-1)a_1, \ldots, (1-t)a_{p}, (t-1)a_{p-1}],t) \\
    \tilde h([a_0,a_1, \ldots, a_p], t) &:= ([ 0,\; ta_1,  ta_2, \ldots,  ta_p ; a_0 ,  (1-t)a_2,  (t-1)a_1, \ldots, (1-t)a_{p}, (t-1)a_{p-1}],t)
  \end{align*}
  Each homotopy fits into a commutative diagram of the form \eqref{diag:homotopy}.
  Note that \(h_1 = \iota_x\), \(h_0 = \iota_{y'} \phi\), \(\tilde h_1 = \iota_{x'}\) and \(\tilde h_0 = \iota_y\tilde \phi\) for some some linear changes of coordinates \(\phi, \tilde \phi\colon \PP^p \rightarrow \PP^p\).
  
  As in the previous case, we choose isomorphisms \(t\colon \Oo \rightarrow \omega_h\otimes \Oo(1-p)\) and \(\tilde t \colon \Oo \rightarrow \omega_{\tilde h}\otimes \Oo(1-p)\) and we define \(t_i:= (\id \times i)^* t_h\) and \(\tilde t_i := (\id\times i)^* \tilde t\) for \(i \in \{0,1\}\).  Then \((h_1)_*t_1= (h_0)_* t_{0}\) and \((\tilde h_1)_*\tilde t_1 = (\tilde h_0)_* \tilde t_0\) as maps \(\H^{0}(\PP^p, \I^0) \rightarrow \H^{p}(Q_{2p},\I^p,\Oo(p-1))\).   We choose \(t_x\), \(t_y\), \(t_{x'}\) and \(t_{y'}\) such that \(t_1 = t_x\), \(t_0 = \phi^* t_{y'}\), \(\tilde t_1 = t_{x'}\) and \(\tilde t_0 =\tilde \phi^* t_{y}\).  Then we find that  \((\iota_x)_*t_x\) and \((\iota_{y'})_*t_{y'}\) agree up to multiplication by a unit in \(\W(F)\), and likewise \((\iota_{x'})_*\) and \((\iota_y)_*\) agree up to multiplication by a unit.

  From a computation analogous to \eqref{n=2p;local=0} and the above \(\mathbb{A}^1\)-homotopies, we may now deduce the relation \(\alpha \beta = 0\).  Similarly, using the homotopy \(h\) and diagram~\eqref{n=2p:cd1}, we find that \(\alpha^2\) is the additive generator \( s_*(t_0 1 \cup t_1 1)\) in degree~\(2p\).  Using the homotopy \(\tilde h\) and \eqref{n=2p:cd1} with \(x'\) and \(y\) replaced by \(x\) and \(y'\), respectively, we find that \(\beta^2\) is the additive generator \(\tilde{s}_*(\tilde{t}_0 1 \cup \tilde{t}_1 1)\) in degree~\(2p\).  
  (To be precise, in these expressions for \(\alpha^2\) and \(\beta^2\) the isomorphisms ``\(t_i\)'' and ``\(\tilde t_i\)'' really denote the pullbacks of the respective trivializations \(t_i\) from \(\PP^p\) to the base.  For example, for \(t_1 = t_x\) we use the following commutative diagram:
  \[
    \xymatrix{
      \W(F) \ar[d]^-{s_x^*t_x} \ar[r]^-{p^*} & \H^0(\PP^p, \I^0 )\ar[d]^-{t_x} \\
      \W(F, s_x^*\omega_x(p-1)) \ar[r]^-{p^*} & \H^0(\PP^p, \omega_x \otimes \Oo(p-1))
    }
  \]
  However, we drop \(s_x^*\) from our notation for simplicity.)        
  
  To compare \(\alpha^2 = s_*(t_01 \cup t_11)\) and \(\beta^2 = \tilde{s}_*(\tilde{t}_0 1 \cup \tilde{t}_1 1)\), we consider the following involution \(\tau\colon\PP^{2p+1}\to\PP^{2p+1}\):
  \[
    \tau([x_0: x_1: \cdots : x_p ; y_0 : y_1 \cdots : y_p]) := [y_0 : x_1 : \cdots : x_p ; x_0 : y_1 : \cdots : y_p]
  \]
  This involution can be restricted to an involution of \(Q_{2p}\), which we still denote by \(\tau\).  It fits into the following commutative diagram of cartesian squares:
  \[
    \xymatrix{
      &\Spec(F)\ar@{-->}[ddrrrr]^(.3){\tilde{s}} \ar@{=}[dl] \ar[rrrr] \ar[dd] &&&&  \PP^p \ar[dd]_-{h_1} \ar@{=}[dl] \\ 
      \Spec(F) \ar@{-->}[ddrrrr]^>>>>>>>>>{s} \ar[rrrr] \ar[dd]&&&& \PP^p \ar[dd]_(.3){\tilde{h}_1} \\
      &\PP^p \ar@{=}[dl] \ar[rrrr]^{h_0}  &&&& Q_{2p}\ar[dl]^-{\tau}  \\
      \PP^p \ar[rrrr]^-{\tilde{h}_0} &&&& Q_{2p}
    }
  \]
  As an automorphism of \(\PP^{2p+1}\), \(\tau\) induces a canonical isomorphism of line bundles \(\tau^*\Oo(1)\cong \Oo(1)\).  This isomorphism of line bundles restricts to a canonical isomorphism between the respective line bundles \(\Oo(1)\) and \(\tau^*\Oo(1)\) on \(Q_{2p}\), and we implicitly use this canonical identification in the following.
  
  The product \(\tau\times\id\) relates the homotopies \(h\) and \(\tilde h\), in the sense that we also have the following commutative diagram:
  \[
    \xymatrix{ \PP^p \times \AA^1 \ar[r]^-{h} \ar[d]^-{\tilde{h}} &  Q_{2p} \times \AA^1 \ar[dl]^-{\tau\times\id}  \\
      Q_{2p} \times \AA^1}
  \]
  In particular, \(\tau\times\id\) induces a canonical isomorphism of the relative canonical line bundles of \(h\) and \(\tilde h\), \(\Omega_{\tau\times\id}\colon \omega_h \rightarrow \omega_{\tilde{h}}\).  Changing our (previously arbitrary) choices of \(t\) and \(\tilde{t}\) if necessary, we may assume that the following diagram of bundles over \(\PP^p\times\AA^1\) is commutative:
  \[
    \xymatrix{\Oo \ar[r]^-{t} \ar[d]^-{\tilde{t}} &   \omega_h \otimes \Oo(1-p) \ar[dl]^-{\Omega_\tau}  \\
      \omega_{\tilde h} \otimes \Oo(1-p) }
  \]
  Consider the map \(\tau^*\colon \H^{2p}(Q_{2p}, \I^{2p}, \Oo(2p-2)) \rightarrow \H^{2p}(Q_{2p}, \I^{2p}, \Oo(2p-2))\) induced by \(\tau\) and the canonical identification \(\tau^*\Oo(1)\cong\Oo(1)\).  By the base change formula, we find that
  \[
    \tau^* s_* (t_01 \cup t_11) = \tilde{s}_* \id^*(\tilde{t}_0 1 \cup \tilde{t}_1 1)
  \]
  in  \(\H^{2p}(Q_{2p},\I^{2p},\Oo(2p-2))\), i.e.\ \(\tau^*\alpha^2 = \beta^2\).  We claim that the map \(\tau^*\) is precisely \(-\id\), so that we obtain \(\alpha^2 = - \beta^2\).

  To prove the claim, we first note that the pushforward map along the inclusion \(i\colon Q_{2p} \hookrightarrow \PP^{2p+1}\) induces an isomorphism
  \[
    i_*\colon \H^{2p}(Q_{2p}, \I^{2p},\omega_i \otimes i^*\LC) \rightarrow \H^{2p+1}(\PP^{2p+1},\I^{2p+1}, \LC)
  \] 
  for any even twist \(\LC\).  This can be seen from the additive computation above and the identification of the relative canonical line bundle \(\omega_i\) of the inclusion as \(\omega_i \cong \Oo(-2)\). The study of \(\tau^*\) on \(Q_{2p}\) can therefore be translated to the study of \(\tau^*\) on \(\PP^{2p+1}\). Consider a fixed point of \(\tau\) on \(\PP^{2p+1}\):
  \[
    \xymatrix{ \Spec(F) \ar[d]^-{=} \ar[r]^-{\point} & \PP^{2p+1} \ar[d]^-{\tau}\\
      \Spec(F) \ar[r]^-{\point} & \PP^{2p+1 } }
  \]
  The base change formula \ref{thm:base-change-formula} implies that \(\tau^* \point_*\) differs from \(\point_*\) by multiplication with the determinant of the map on normal bundles \(\mathcal{N}_{\point} \rightarrow \mathcal{N}_{\point}\) induced by \(\tau\). It is not hard to see that this determinant is just the determinant of the matrix of the linear change of coordinates \(\tau\), which is \(-1\).
\end{proof}

\section{Geometric bidegrees and real realization}\label{sec:real-realization}

We now compare Theorems \ref{thm:I-ring} and \ref{thm:I-ring-2} with Theorem~\ref{top:integral-ring}. They both apply to the split quadrics \(Q_n\) with \(n=(p,p)\) or \(n=(p+1,p)\) and \(n \geq 3\), while the latter theorem also holds more generally. The \(\ZZ \oplus \ZZ \oplus \ZZII\)-graded ring \(\H^\tot(X, \I)\) contains the \(\ZZ \oplus \ZZII\)-graded subring (even a \(\W(F)\)-algebra) given by the condition that the two \(\ZZ\)-degrees coincide.  This corresponds to motivic bidegrees \((2i,i)\) and hence to Chow and Chow-Witt groups.  Following \cite{HornbostelWendt}, \cite{HWXZ} and others, we denote this important sub-\(\W(F)\)-algebra by \(\H^\bullet(X, \I^\bullet, \Oo \oplus \Oo(1))\).  The computations of Sections \ref{additivecomp} and \ref{multiplicativecomp} easily restrict to these \(\W(F)\)-submodules/subalgebras. For example, restricting Theorem~\ref{thm:I-ring} to \(\H^\bullet(Q_n, \I^\bullet, \Oo \oplus \Oo(1))\) means replacing \(\II\) by \(\W(F)\) and \(\II'\) by \(\FI(F)\).

Now if \(\W(F) \cong \ZZ\), e.g.\ if \(F=\RR\), then there is an explicit ring isomorphism from this graded commutative ring \(\H^\bullet(Q_n, \I^\bullet, \Oo \oplus \Oo(1))\) in algebraic geometry to the graded commutative ring \(\H^\bullet(X(\RR)^{an},\ZZ \oplus \ZZ(1))\) in topology, given by mapping \(\alpha\) to \(\alpha - \beta\) if \(n=2p\), and the identity on all other generators. Moreover, there is another obvious isomorphism between the corresponding rings with \(\ZZII\)-coefficients, and both isomorphisms are compatible via the reduction maps.

None of this is a coincidence, of course. As observed in Remark~\ref{sec:top:alt-additive}, the strategy of our computation of the \(\I\)-cohomology of the algebro-geometric \(Q_n\) also lends itself to a computation of the integral singular cohomology \(\H^\bullet(Q_n,\ZZ \oplus \ZZ(1))\) of the topological real quadric.  Even more is true. We recall from \cite{jacobson} that there is a real realization functor
\[
  \H^\bullet(X,\I^{\bullet}) \to \H^\bullet(X(\RR)^{an},\ZZ)
\]
for a smooth variety \(X\) over \(\RR\), which is known to be an isomorphism in some degrees in special cases, see loc.\ cit.\ and previous work by Fasel and others.  By recent work of the authors and Matthias Wendt \cite{HWXZ}, we have a stronger result for smooth cellular varieties.

\begin{theorem}\label{HWXZ}
  For any smooth variety \(X\) over \(\RR\), the real realization morphism
  \[
    \H^\bullet(X,\I^{\bullet}) \to  \H^\bullet(X(\RR)^{an},\ZZ)
  \]
  is a ring homomorphism.
  If \(X\) is cellular, then this is an isomorphism.
\end{theorem}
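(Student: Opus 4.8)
The plan is to deduce both assertions from the way the real realization functor interacts with the structural properties that the two cohomology theories share, so I would treat multiplicativity and the isomorphism statement separately. The multiplicativity is largely formal: real realization $X \mapsto X(\RR)^{an}$ extends to a symmetric monoidal functor $\mathrm{SH}(\RR) \to \mathrm{SH}$, and $\I$-cohomology (in the geometric bidegrees) is represented by a motivic ring spectrum whose real realization is the Eilenberg--MacLane ring spectrum $\mathrm{H}\ZZ$. This last identification is the one genuine input here and is exactly where Bachmann's description of the real étale localisation of $\mathrm{SH}(\RR)$ as $\mathrm{SH}$ enters. Granting it, the induced map on cohomology is a homomorphism of graded rings by naturality and lax monoidality of realization, since cup products on both sides are induced by the multiplication of the representing ring spectrum.

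For the isomorphism statement I would argue by induction over a cellular filtration, comparing the two theories through three properties they have in common. The inputs I would isolate are: (i) homotopy invariance for affine space bundles, which holds algebraically by Theorem~\ref{thm:homotopy-invariance} and holds topologically because such a bundle realizes to a fibre bundle with contractible fibres $\RR^r$; (ii) localization (Gysin) long exact sequences, together with the fact that realization sends a closed--open decomposition $Z \hookrightarrow X \hookleftarrow X - Z$ to a cofibre sequence of spaces and hence intertwines the two long exact sequences; and (iii) Thom isomorphisms, where the algebraic Thom class in $\I$-cohomology twisted by $\det \sheaf N$ realizes to a topological Thom class twisted by the orientation local system of the real normal bundle. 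The base case is $X = \Spec(\RR)$, where $\H^\bullet(\RR, \I^\bullet) = \W(\RR) \cong \ZZ \cong \H^\bullet(\point, \ZZ)$, and one checks that the realization map carries $\langle 1 \rangle$ to a generator.

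With these tools in place the induction runs exactly as in the additive computation of Section~\ref{additivecomp}: a cellular variety admits a stratification whose open complements are affine space bundles over smaller cellular pieces, so that one obtains, on both the algebraic and the topological side, compatible localization sequences relating $X$, a closed cellular $Z$, and an affine-bundle complement. Applying the realization map to the resulting ladder and invoking the five lemma, the inductive hypothesis together with (i)--(iii) forces the middle vertical arrow to be an isomorphism. Combined with the already established ring structure and the known multiplicative generators, this upgrades the additive isomorphism to a ring isomorphism.

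I expect the main obstacle to be input (iii), the matching of twists. On the algebraic side $\I$-cohomology is only $GL$-oriented up to a determinant twist, and the twisting homomorphisms of Definition~\ref{def:twisthomo} are assembled from Bockstein/connecting maps and Euler classes; one must verify that real realization carries each of these structure maps to its topological counterpart, and, crucially, that the determinant-line-bundle twist $\det \sheaf N$ corresponds precisely to the orientation character of the realized real normal bundle. Establishing this compatibility at the level of the representing spectra -- so that all connecting maps, Euler classes and Thom isomorphisms realize coherently and the local coefficient systems $\ZZ(s)$ of Section~\ref{sec:top-twists} appear on the nose -- is the delicate technical heart of the argument, and is where the bulk of the work in \cite{HWXZ} lies.
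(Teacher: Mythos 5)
The first thing to say is that the paper does not prove this theorem: its entire ``proof'' is the citation ``See \cite{HWXZ}'', deferring to a separate article in preparation. There is therefore no argument in the present paper to compare yours against, and any assessment has to be of your sketch on its own terms. Your blueprint is the expected one, and it is consistent with what the paper itself hints at in Remark~\ref{sec:top:alt-additive} (a topological analogue of the localization/Thom-isomorphism/blow-up computations) and in Section~\ref{sec:real-realization}: multiplicativity via a lax symmetric monoidal realization functor together with Bachmann's identification of the real-étale-local (equivalently $\rho$-inverted) part of $\mathrm{SH}(\RR)$ with $\mathrm{SH}$, and the isomorphism via induction over a cellular filtration using localization sequences and the five lemma.

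That said, as a proof your text is essentially a relabelling of the missing reference rather than a replacement for it: every load-bearing step is asserted. Concretely: (a) the identification of the spectrum representing $\I^{\bullet}$-cohomology in the relevant bidegrees and of its realization with $\mathrm{H}\ZZ$ is stated, not derived, and the compatibility of the spectrum-level multiplication with the product on $\I$-cohomology as defined via Gersten/Rost--Schmid complexes is itself something to check; (b) the compatibility of realization with the boundary maps of the localization sequences and with Thom isomorphisms (including the matching of the $\det\sheaf N$-twist with the orientation local system of the realized normal bundle) is exactly the content you defer to \cite{HWXZ}; and (c) the five-lemma induction needs more care than stated, since the closed strata of a cellular filtration need not be smooth, so one must either run the induction on the open strata with supports or otherwise arrange that dévissage applies. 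You are candid that (b) is ``where the bulk of the work lies'', which is accurate --- but it means the proposal is a correct \emph{plan}, not a proof, and in that respect it is on exactly the same footing as the paper itself.
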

\begin{proof}
  See \cite{HWXZ}.
\end{proof}

This applies in particular to the split quadrics \(Q_n\).  The preprint \cite{HWXZ} also studies twisted coefficients and establishes a generalization of Theorem~\ref{HWXZ}, replacing \(\I^{\bullet}\) and \(\ZZ\) by \((\I^{\bullet},\Oo \oplus \Oo(1))\) and \(\ZZ \oplus \ZZ(1)\), respectively.  We deduce that over \(F=\RR\) the description of Theorems~\ref{thm:I-ring} and \ref{thm:I-ring-2} extends to all quadrics covered by Theorem~\ref{top:integral-ring}, in particular to \(\PP^1 \times \PP^1\). However, the Picard group is larger in this case, so we have not computed cohomology with respect to all possible twists for \(\PP^1 \times \PP^1\).  If more generally \(F\) is a subfield of \(\RR\), then the base change composed with real realization is still a ring homomorphism.

\section{Chow-Witt rings}\label{sec:CW-ring}

This section contains a short self-contained presentation and computation of the Chow-Witt ring of \(Q_n\), arising from the fiber product description first used in \cite{HornbostelWendt} that emphasizes the relationship with Chow groups. This description relies on the isomorphism \(\Ch^{\bullet}(X) \cong \H^{\bullet}(X,\Ib^{\bullet})\), and hence on the proof of the Milnor conjecture of Voevodsky et al.

Recall that the \(\ZZ \oplus \ZZ \oplus \ZZII\)-graded ring \(\H^\tot(Q_n, \I)\) restricts to Chow-Witt groups in bidegrees \((i,i)\), corresponding to motivic bidegrees \((2i,i)\).  By Theorem~\ref{CHquadric}, we know that \(\CH^\bullet(Q_{p,p})\) and \(\CH^\bullet(Q_{p,p+1})\) have no two-torsion. Hence to compute \(\CW^\bullet(Q_{p,p})\) and \(\CW^\bullet(Q_{p,p+1})\), we may apply part~(1) of Proposition~2.11 of \cite{HornbostelWendt}, which we briefly recall.

\begin{proposition}\label{prop:HW}
  Let \(X\) be a smooth scheme over a field \(F\) of characteristic \(\neq 2\). The canonical ring homomorphism
  \[
    \CW^\bullet(X)\to \H^\bullet(X,\I^\bullet)\times_{\Ch^\bullet(X)} \ker\partial
  \] 
  induced from the cartesian square defining Milnor-Witt \(K\)-theory is always surjective.  It is injective if \(\CH^\bullet(X)\) has no non-trivial two-torsion. 
\end{proposition}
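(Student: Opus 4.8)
The plan is to deduce both assertions from the long exact Mayer--Vietoris sequence attached to the cartesian square defining Milnor--Witt $K$-theory. Recall (Morel) that for each weight $n$ the sheaf $\KMW_n$ is the fibre product of $\KM_n$ and $\I^n$ over $\Ib^n$, where the two structure maps $\KM_n \twoheadrightarrow \KM_n/2 \cong \Ib^n$ and $\I^n \twoheadrightarrow \I^n/\I^{n+1} = \Ib^n$ are epimorphisms of Nisnevich sheaves; here I use the Milnor conjecture to identify $\KM_n/2$ with $\Ib^n$, which is exactly the identification $\Ch^\bullet(X)\cong \H^\bullet(X,\Ib^\bullet)$ underlying this section. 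A cartesian square of sheaves with epimorphic legs yields a short exact sequence
\[
  0 \to \KMW_n \to \KM_n \oplus \I^n \xrightarrow{\ \psi\ } \Ib^n \to 0,
  \qquad \psi(z,u)=\bar z - \bar u,
\]
and the comparison map in the proposition is induced on cohomology by the two projections $\KMW_n \to \KM_n$ and $\KMW_n \to \I^n$. As these are maps of sheaves of (graded) rings, the comparison map is a ring homomorphism, and surjectivity/injectivity may be checked additively in each tridegree; the same argument applies verbatim with coefficients twisted by a line bundle, since the square is compatible with twists.

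First I would write out the cohomology long exact sequence in a fixed weight $n$, with cohomological degree $m$ running:
\[
  \cdots \to \H^{m-1}(X,\Ib^n) \xrightarrow{\delta} \H^m(X,\KMW_n) \xrightarrow{(p,q)} \H^m(X,\KM_n)\oplus \H^m(X,\I^n) \xrightarrow{\psi} \H^m(X,\Ib^n) \to \cdots
\]
Specialising to the geometric bidegree $m=n$ identifies the terms with $\CW^n(X)$, $\CH^n(X)\oplus\H^n(X,\I^n)$ and $\Ch^n(X)$, respectively. By definition $\ker\psi = \{(z,u): \bar z = \red(u)\}$ is precisely the fibre product $\CH^n(X)\times_{\Ch^n(X)}\H^n(X,\I^n)$. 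The factor $\ker\partial$ in the statement is the subgroup of $\CH^\bullet(X)$ annihilated by the integral Bockstein into $\I$-cohomology, i.e.\ the preimage under mod-$2$ reduction of $\ker\big(\partial\colon \Ch^\bullet(X)\to\H^{\bullet+1}(X,\I^{\bullet+1})\big)$; since the Chow component of any element of the fibre product reduces to the reduction of an $\I$-cohomology class and therefore automatically lies in this kernel, the fibre product over $\Ch^\bullet(X)$ with $\ker\partial$ coincides with the fibre product with all of $\CH^\bullet(X)$, so I may work with the latter. Exactness at the middle term now gives $\operatorname{im}(p,q) = \ker\psi$ equal to the whole fibre product; this is the claimed surjectivity, and it needs no hypothesis on $X$.

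For injectivity, exactness at $\H^n(X,\KMW_n)$ identifies $\ker(p,q)$ with $\operatorname{im}(\delta) \cong \operatorname{coker}(\psi_{n-1})$. This cokernel vanishes as soon as the reduction $\H^{n-1}(X,\KM_n)\to \H^{n-1}(X,\KM_n/2)=\H^{n-1}(X,\Ib^n)$ is surjective, since that single map already hits everything in the image of $\psi_{n-1}$. I would then invoke the Bockstein long exact sequence of $0\to\KM_n\xrightarrow{\,2\,}\KM_n\to\KM_n/2\to 0$, which shows that the cokernel of this reduction embeds into the two-torsion subgroup $\H^n(X,\KM_n)[2]=\CH^n(X)[2]$. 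Hence if $\CH^\bullet(X)$ has no non-trivial two-torsion, the reduction is surjective in every weight, $\delta=0$, and $(p,q)$ is injective.

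The one genuinely substantive point---and the place where the two-torsion hypothesis enters---is the identification of the obstruction $\operatorname{coker}(\psi_{n-1})$ with two-torsion in the Chow group; everything else is formal once Morel's cartesian square and the Milnor-conjecture identification $\KM_n/2\cong \Ib^n$ are in hand. I would therefore spend most of the care on the Bockstein comparison, verifying that the map $\H^\bullet(X,\KM_\bullet)\to\H^\bullet(X,\Ib^\bullet)$ occurring in the Mayer--Vietoris sequence is literally mod-$2$ reduction, so that its cokernel is controlled by $\CH^\bullet(X)[2]$, and checking the compatibility of the various reduction maps that makes $\ker\psi$ the expected fibre product.
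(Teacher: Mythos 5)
Your proof is correct and follows essentially the same route as the argument the paper relies on: the paper defers to \cite[Proposition~2.11]{HornbostelWendt}, whose proof is precisely this diagram chase in the exact sequences arising from Morel's cartesian square for \(\KMW_n\), with the two-torsion of \(\CH^\bullet(X)\) entering as the obstruction to injectivity via the Bockstein for \(\KM_n\xrightarrow{2}\KM_n\to\KM_n/2\). Your packaging of the square as a single Mayer--Vietoris sequence \(0\to\KMW_n\to\KM_n\oplus\I^n\to\Ib^n\to 0\), together with the observation that the fibre product over \(\Ch^\bullet(X)\) with \(\ker\partial\) agrees with that over all of \(\CH^\bullet(X)\), is only a cosmetic reorganization of the ``key diagram'' used there.
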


Here, \(\partial\colon \CH^\bullet(X)\to \H^{\bullet+1}(X, \I^{\bullet+1})\) is defined as in loc.\ cit.
The kernel of \(\partial\) is a subring of \(\CH^\bullet(X)\) because the Bockstein homomorphism satisfies a Leibniz formula \cite[proof of Proposition~4.7]{fasel:chowwittring}.

Note that the general assumption of \cite{HornbostelWendt} that the base field is perfect is not necessary in Proposition~\ref{prop:HW}:  the proof of \cite[Proposition~2.11]{HornbostelWendt} applies verbatim if we use Zariski sheaf cohomology in the ``key diagram'' considered in loc.\ cit., and Zariski sheaf cohomology can be computed from the Gersten complex \cite[Theorem~3.26]{fasel:chowwittring}.
Moreover, Proposition~\ref{prop:HW} also applies to twisted coefficients: the proof of loc.\ cit.\ goes through when twisting with a line bundle \(\LC\). 
  
\begin{lemma}\label{h2-generators-here-vs-DI:Hopf}
  In terms of our chosen generators, the isomorphism of commutative \(\ZZII\)-algebras 
  \(\Ch^\bullet(Q_n) \xrightarrow{\cong} \H^\bullet(Q_n, \Ib^\bullet)\) has the form 
  \begin{align*}
    \bar{x}&\mapsto \bar{\xi}\\
    \bar{y}&\mapsto \begin{cases}
      \overline{\beta} + \delta \overline{\xi}^p & \textnormal{ if \(n\) is even, where \(\delta \in \{0,1\}\)} \\
      \overline{\beta} & \textnormal{ if \(n\) is odd} 
    \end{cases}
  \end{align*}
\end{lemma}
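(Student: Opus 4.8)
The plan is to exploit that the map in question is a \emph{graded ring} isomorphism and to read off the images of the two algebra generators $\bar x$ and $\bar y$ purely from the additive computations of both sides: Theorem~\ref{CHquadric} for the source $\Ch^\bullet(Q_n) = \CH^\bullet(Q_n)/2$, and Corollary~\ref{thm:I-ring-without-twists} together with Remark~\ref{rem:generators-of-DI} for the target $\H^\bullet(Q_n,\Ib^\bullet)$. No fine information about the Milnor-conjecture identification $\Ch^\bullet(Q_n)\cong\H^\bullet(Q_n,\Ib^\bullet)$ is needed beyond its being a degree-preserving ring isomorphism.

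First I would treat the degree-one generator. Both $\Ch^1(Q_n)$ and $\H^1(Q_n,\Ib^1)$ are isomorphic to $\ZZII$, with unique nonzero elements $\bar x$ and $\bar\xi$ respectively. Since the isomorphism is injective and $\bar x\neq 0$, it must send $\bar x\mapsto\bar\xi$. Because the map is a ring homomorphism, this already forces $\bar x^m\mapsto\bar\xi^m$ for all $m$, and in particular $\bar x^p\mapsto\bar\xi^p$, which is nonzero in the target since the defining relation there is $\bar\xi^{p+1}=0$.

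Next the generator $\bar y$. When $n=2p+1$ is odd, $\deg\bar y=p+1$, and reducing Theorem~\ref{CHquadric} mod two the relation $x^{p+1}=2y$ becomes $\bar x^{p+1}=0$, so $\Ch^{p+1}(Q_{2p+1})\cong\ZZII$ with $\bar y$ the unique nonzero class; on the target side $\H^{p+1}(Q_{2p+1},\Ib^{p+1})\cong\ZZII\bar\beta$. Injectivity again forces $\bar y\mapsto\bar\beta$. When $n=2p$ is even, $\deg\bar y=p$ and both $\Ch^p(Q_{2p})$ and $\H^p(Q_{2p},\Ib^p)$ are two-dimensional over $\ZZII$, with bases $\{\bar x^p,\bar y\}$ and $\{\bar\xi^p,\bar\beta\}$ respectively (the latter by the additive part of Corollary~\ref{thm:I-ring-without-twists}, cf.\ Remark~\ref{rem:generators-of-DI}). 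The isomorphism carries the basis $\{\bar x^p,\bar y\}$ to a basis of the target; since it already sends $\bar x^p\mapsto\bar\xi^p$, the image of $\bar y$ must be $\ZZII$-linearly independent from $\bar\xi^p$, i.e.\ its $\bar\beta$-coefficient equals $1$. Hence $\bar y\mapsto\bar\beta+\delta\bar\xi^p$ for some $\delta\in\{0,1\}$, as claimed.

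There is no serious obstacle here: the argument is entirely a matter of matching graded pieces and invoking injectivity of a ring isomorphism. The only point requiring a little care is the even case, where one must confirm that $\{\bar x^p,\bar y\}$ is genuinely a $\ZZII$-basis of $\Ch^p(Q_{2p})$ and that $\bar\xi^p\neq 0$ in the target, so that the linear-independence argument pins the $\bar\beta$-coefficient of the image of $\bar y$ to $1$ while leaving its $\bar\xi^p$-coefficient $\delta$ genuinely undetermined.
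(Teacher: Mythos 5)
Your proposal is correct and follows exactly the reasoning the paper has in mind: its proof of this lemma is the one-line remark that the claim ``is obvious, given the degree constraints and that we are working over $\ZZII$'', and your argument simply spells out those degree constraints (uniqueness of the nonzero classes in degrees $1$ and $q$ when the relevant groups are $\ZZII$, and the basis-matching argument in the two-dimensional middle degree for even $n$). No discrepancy to report.
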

\begin{proof}
  This is obvious, given the degree constraints and that
  we are working over \(\ZZII\).
\end{proof}

Putting everything together, we obtain the following result:

\begin{@empty}
  \renewcommand{\red}{\mathrm{mod}\,2}% -- notation of \cite{HornbostelWendt}.
  \begin{theorem}\label{thm:CW-ring}
    Let \(Q_n\) be the split quadric over a field \(F\) of characteristic \(\neq 2\) as above, 
    and \(\LC\) a line bundle over \(Q_n\). 
    Then the graded \(\GW(F)\)-module \(\CW^\bullet(Q_{n},\LC)\)
    is given by 
    \[
      \CW^\bullet(Q_n,\LC)\stackrel{\cong}{\to} \H^\bullet(Q_n,\I^\bullet,\LC)\times_{\Ch^\bullet(Q_n)}\ker\partial_{\LC}
    \]
    with \(\ker \partial_{\LC}\) given as follows:

    \begin{align*}
      \text{ for } \LC=\Oo: 
      & 
        \begin{cases}
          \ZZ\langle 2x,2y,xy,x^2,y^2\rangle & \text{ when \(n=2p\), and } p \text{ is even} \\
          \ZZ\langle 2x,x^2,x^p,y\rangle & \text{ when \(n=2p\), and } p \text{ is odd} \\
          \ZZ\langle 2x,x^2,y\rangle & \text{ when \(n=2p+1\), and } p \text{ is even} \\
          \ZZ\langle 2x,2y,xy,x^2,x^p\rangle & \text{ when \(n=2p+1\), and  } p \text{ is odd}
        \end{cases} 
    \end{align*}
    where \(\ZZ\langle \text{elements} \rangle\) denotes the subring of \(\CH^\bullet(Q_n)\) generated by the specified elements (see Theorem~\ref{CHquadric}).
    The following four twisted cases are described as submodules of \(\CH^\bullet(Q_n)\) over the respective four rings above:
    \begin{align*}
      \text{ for } \LC=\Oo(1): 
      & 
        \begin{cases}
          x \cdot \ZZ\langle x^2,xy \rangle  + y \cdot \ZZ + \ker(\red) & \text{ when \(n=2p\), and } p \text{ is even} \\
          x \cdot \ZZ\langle x^2,x^p,y \rangle + \ker(\red) & \text{ when \(n=2p\), and } p \text{ is odd} \\
          x \cdot \ZZ\langle x^2,y,x^{p-1}y\rangle + \ker(\red) & \text{ when \(n=2p+1\), and } p \text{ is even} \\
          x \cdot \ZZ\langle x^2,x^p,xy, x^{p-1}y\rangle + y \cdot \ZZ + \ker(\red) & \text{ when \(n=2p+1\), and } p \text{ is odd} 
        \end{cases}
    \end{align*}
    In this description, \(\red\) denotes the mod-\(2\)-reduction map \(\CH^\bullet(Q_n)\to\Ch^\bullet(Q_n)\).
  \end{theorem}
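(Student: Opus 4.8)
The plan is to separate the theorem into two independent tasks: establishing the abstract fibre-product description, and then making the factor $\ker\partial_\LC$ explicit. For the first task I would simply invoke Proposition~\ref{prop:HW} with $X=Q_n$ and the given twist $\LC$. As remarked just after that proposition, its proof goes through verbatim for twisted coefficients, so it supplies the canonical surjection onto $\H^\bullet(Q_n,\I^\bullet,\LC)\times_{\Ch^\bullet(Q_n)}\ker\partial_\LC$ for every $\LC$. By Theorem~\ref{CHquadric} each ring $\CH^\bullet(Q_n)$ has no non-trivial two-torsion, so the second half of Proposition~\ref{prop:HW} upgrades this surjection to an isomorphism. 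This disposes of the structural claim and reduces everything to identifying $\ker\partial_\LC$.

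For the computation of $\ker\partial_\LC$, the key reduction is that the Bockstein factors as $\CH^\bullet(Q_n)\xrightarrow{\red}\Ch^\bullet(Q_n)=\H^\bullet(Q_n,\Ib^\bullet)\xrightarrow{\partial_\LC}\H^{\bullet+1}(Q_n,\I^{\bullet+1},\LC)$, where the second arrow is the connecting homomorphism of the long exact sequence attached to the short exact sequence of sheaves $0\to\I^{j+1}\to\I^j\to\Ib^j\to 0$, twisted by $\LC$. In the relevant geometric bidegrees this exact sequence reads $\H^i(Q_n,\I^i,\LC)\xrightarrow{\red}\H^i(Q_n,\Ib^i)\xrightarrow{\partial_\LC}\H^{i+1}(Q_n,\I^{i+1},\LC)$, so exactness gives $\ker\bigl(\partial_\LC\colon\Ch^i\to\H^{i+1}(Q_n,\I^{i+1},\LC)\bigr)=\mathrm{im}\,\red_\LC$. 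Since $\CH^\bullet(Q_n)$ has no two-torsion we have $\ker(\CH^\bullet\to\Ch^\bullet)=2\CH^\bullet=\ker(\red)$, whence $\ker\partial_\LC$ is precisely the preimage of $\mathrm{im}\,\red_\LC$, namely any set of lifts of $\mathrm{im}\,\red_\LC$ together with all of $\ker(\red)$. This accounts for the recurring summand $\ker(\red)$ in the twisted cases, and since $\partial_\LC$ satisfies a Leibniz rule it explains why the untwisted factor $\ker\partial_\Oo$ comes out as a subring.

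The rest is bookkeeping driven by two inputs. First I would read the reduction maps off Theorems~\ref{thm:I-ring} and \ref{thm:I-ring-2}: for odd $n=2p+1$ one has $\xi\mapsto\bar\xi$, $\alpha\mapsto\bar\xi^p$, $\beta\mapsto\bar\beta$, while for even $n=2p$ one has $\xi\mapsto\bar\xi$, $\beta\mapsto\bar\beta$, $\alpha\mapsto\bar\xi^p-\bar\beta$. Restricting to the even-twist part produces $\mathrm{im}\,\red_\Oo$ and to the odd-twist part $\mathrm{im}\,\red_{\Oo(1)}$; the essential step is to track, generator by generator and using the stated degrees, which monomials $\xi^k$, $\xi^k\beta$, $\alpha$, $\alpha\beta$ carry even and which carry odd total twist, since this depends on the parity of $p$. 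Translating the resulting subgroups of $\H^\bullet(Q_n,\Ib^\bullet)$ back into $\CH^\bullet(Q_n)$ via the dictionary $\bar x=\bar\xi$, $\bar y=\bar\beta$ (up to the harmless summand $\delta\bar\xi^p$) of Lemma~\ref{h2-generators-here-vs-DI:Hopf}, and then adjoining $\ker(\red)$, should yield in each of the eight cases exactly the subring or submodule listed.

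The hard part will be this last step, especially for the even-dimensional quadrics: there $\H^p(Q_{2p},\I^p,\Oo(p-1))$ is free of rank two, the reduction of $\alpha$ is the combination $\bar\xi^p-\bar\beta$ rather than a single monomial, and one must carry along both the ambiguity $\delta\in\{0,1\}$ of Lemma~\ref{h2-generators-here-vs-DI:Hopf} and the parity-dependent behaviour of $\bar\beta^2$ recorded in Corollary~\ref{thm:I-ring-without-twists}. Maintaining a consistent even/odd twist sorting compatible with the $\Oo$ versus $\Oo(1)$ splitting across all parity cases is where the argument is most error-prone; but once the reduction maps and the ring $\CH^\bullet(Q_n)$ are fixed, the verification is entirely mechanical.
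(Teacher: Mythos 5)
Your proposal is correct and follows essentially the same route as the paper: the fibre-product description comes from Proposition~\ref{prop:HW} (applicable since \(\CH^\bullet(Q_n)\) is two-torsion-free by Theorem~\ref{CHquadric}), and \(\ker\partial_\LC\) is computed via the factorisation of \(\partial_\LC\) through mod-\(2\)-reduction and the identity \(\ker\beta_\LC = \operatorname{Im}\rho_\LC\), with the image of the reduction map read off from Theorems~\ref{thm:I-ring}, \ref{thm:I-ring-2} and Lemma~\ref{h2-generators-here-vs-DI:Hopf}. The only point you leave open that the paper settles explicitly is that the \(\delta\)-ambiguity is harmless because of the relations \(\bar{\xi}^{p+1}=0\) and \(y^2-x^py\); otherwise the bookkeeping you describe is exactly what the paper carries out.
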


  \begin{proof}
    Everything except the computation of \(\ker \partial_{\LC}\) has been established already. For the latter, one uses that in the notations of the diagram in \cite[Section~2.4]{HornbostelWendt} we have \(\partial_{\LC}=\beta_{\LC} \circ \red\) and \(\ker \beta _{\LC}=\textnormal{Im} \rho_{\LC}\).  (Strictly speaking, the diagram in \cite[Section~2.4]{HornbostelWendt} does include twists by line bundles, but line bundles can be fitted into the diagram without any difficulty.)  Our computation of the \(\W(F)\)-algebra \(\H^\bullet(Q_{p,q},\I^\bullet)\) and the reduction map \(\rho_{\LC}\) (using Lemma~\ref{h2-generators-here-vs-DI:Hopf}) allow us to completely compute the image of the latter. Note that for \(n=2p\) and \(p\) even this is independent of the value of \(\delta\), using the relation \(\bar{\xi}^{p+1}=0\) both in the untwisted and twisted case and moreover the relation \(y^2 -x^{p}y\) in the twisted case.  From this we easily deduce the kernels of \(\partial_{\LC}\) and \(\beta_{\LC}\) both for \(\LC=\Oo\) and \(\LC=\Oo(1)\).
  \end{proof}
\end{@empty}

For a more concise description of the \(\ZZ\oplus \ZZII\)-graded \(\GW(F)\)-algebra \(\CW^\bullet(Q_{n},\Oo \oplus \Oo(1))\), we introduce an artificial \(\ZZII\)-grading on \(\CH^\bullet(Q_n)\) and \(\Ch^\bullet(Q_n)\) as follows:
\begin{align*}
  \CH^\bullet(Q_n,\Oo\oplus \Oo(1)) := \CH^\bullet(Q_n)[\tau]/(\tau^2-1)\\
  \Ch^\bullet(Q_n,\Oo\oplus \Oo(1)) := \Ch^\bullet(Q_n)[\tau]/(\tau^2-1)
\end{align*}
In this notation, the same arguments as in the proof above yield the following description of the Chow-Witt ring:
\[
  \CW^\bullet(Q_n,\Oo\oplus\Oo(1)) \cong \H^\bullet(Q_n,\I^\bullet,\Oo\oplus\Oo(1))\times_{\Ch^\bullet(Q_n,\Oo\oplus\Oo(1))}\ZZ\langle 2\tau,2x,2y,x\tau,x^p\tau^{p+1},y\tau^{q+1}\rangle,
\]
where \(\ZZ\langle \text{elements} \rangle\) denotes the subring of \(\CH^\bullet(Q_n,\Oo\oplus\Oo(1))\) generated by the specified elements.

\section{Milnor-Witt cohomology}
The additive computations of $\I$-cohomology can  be transferred to Milnor-Witt cohomology.  Again, we begin by recalling the corresponding result for projective spaces:

\begin{theorem}[{\cite[Theorem~11.7]{fasel:ij}}]\label{thm:KMWP}   
  Let \( \PP^n = \PP^n_F\), where \(F\) is a field of characteristic \(\neq 2\). For degrees \(i\in \{0,\dots,n\}\), the Milnor-Witt cohomology groups of \(\PP^n\) are as follows:
  \[	
    \H^i( \PP^n, \KMW_j,\Oo(l) ) \cong
    \begin{cases}
      \K^{MW}_{j-i}(F) & \textnormal{if (\(i = 0\) and \(l \equiv i \mod 2\))}     \\
      & \textnormal{or (\(i = n\) and \(l \not\equiv i \mod 2\))} \\
      \K^{M}_{j-i}(F)  & \textnormal{if \(i \neq 0\) and \(l \equiv i \mod 2\)}    \\
      2\K^{M}_{j-i}(F) & \textnormal{if \(i \neq n\) and \(l \not\equiv i \mod 2\)}
    \end{cases}
  \]
  In degrees \(i\not\in \{0,\dots, n\}\), these groups vanish.
\end{theorem}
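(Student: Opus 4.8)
The plan is to deduce this purely additive statement from the $\I$-cohomology computation of projective space together with Morel's cartesian-square presentation of the Milnor--Witt $K$-theory sheaf, in the same fibre-product spirit as the Chow--Witt description in Proposition~\ref{prop:HW}. The starting point is the short exact sequence of sheaves
\[
  0 \to \KMW_j \to \KM_j \oplus \I^j \to \Ib^j \to 0,
\]
in which the right-hand map is the difference of the two reduction maps and $\Ib^j = \I^j/\I^{j+1} \cong \KM_j/2$ by the Milnor conjecture. Since the quotient $\I^j/\I^{j+1}$ is canonically independent of any orientation, this sequence twists by a line bundle $\Oo(l)$ to
\[
  0 \to \KMW_j(\Oo(l)) \to \KM_j \oplus \I^j(\Oo(l)) \to \Ib^j \to 0,
\]
the Milnor and $\Ib$ terms remaining untwisted. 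Taking sheaf cohomology on $\PP^n$ then produces a long exact sequence relating $\H^i(\PP^n, \KMW_j, \Oo(l))$ to the cohomologies of $\KM_j$, $\I^j$ and $\Ib^j$.

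First I would assemble the three ingredient cohomologies, all of which are known. For $\I$-cohomology I would invoke Theorem~\ref{thm:Faselproj}, or more precisely its field-level specialization~\eqref{eq:I-cohomology-of-P-over-field}. For the Milnor and $\Ib$ sheaves the projective bundle formula gives $\H^i(\PP^n, \KM_j) \cong \K^M_{j-i}(F)$ and $\H^i(\PP^n, \Ib^j) \cong \FIb^{j-i}(F)$ for $0 \le i \le n$, both vanishing otherwise, and it identifies the map between them induced by $\KM_j \to \Ib^j$ with coefficient-wise mod-$2$ reduction $\K^M_{j-i}(F) \to \K^M_{j-i}(F)/2$.

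The crucial step is to show that the long exact sequence degenerates. The map
\[
  \H^i(\PP^n, \KM_j) \oplus \H^i(\PP^n, \I^j, \Oo(l)) \to \H^i(\PP^n, \Ib^j)
\]
is surjective in every degree, since already its first summand surjects by the previous paragraph. Hence all connecting homomorphisms vanish, the sequence collapses into short exact sequences, and $\H^i(\PP^n, \KMW_j, \Oo(l))$ is identified with the honest fibre product $\H^i(\PP^n, \KM_j) \times_{\H^i(\PP^n, \Ib^j)} \H^i(\PP^n, \I^j, \Oo(l))$.

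It then remains to evaluate this fibre product in the four regimes, and this bookkeeping is where I expect the main obstacle to lie. When $l \equiv i$ with $1 \le i \le n$, the reduction $\H^i(\I^j,\Oo(l)) \to \H^i(\Ib^j)$ is an isomorphism by~\eqref{eq:I-cohomology-of-P-over-field}, so the fibre product collapses onto $\H^i(\KM_j) \cong \K^M_{j-i}(F)$; when $l \not\equiv i$ with $0 \le i \le n-1$, the group $\H^i(\I^j,\Oo(l))$ vanishes and the fibre product is the kernel of mod-$2$ reduction, namely $2\K^M_{j-i}(F)$; and in the extreme degrees $i=0$ with $l \equiv 0$ and $i=n$ with $l \not\equiv n$, the $\I$-cohomology contributes the full $\FI^{j-i}(F)$ and the fibre product reconstructs the defining cartesian square of $\K^{MW}_{j-i}(F)$. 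The delicate points are matching the parities of $l$ against $i$ to Fasel's formula in each bidegree—deciding which leg is an isomorphism, which is zero, and which gives the full fundamental-ideal power—verifying that these cases exhaust $0 \le i \le n$ without overlap, and confirming throughout that both reduction maps $\I^j(\Oo(l)) \to \Ib^j$ indeed land in the \emph{same} untwisted target, which is the one structural fact that must be checked rather than assumed.
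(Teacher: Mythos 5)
Your proposal is essentially correct, but it takes a different route from the paper: the paper offers no proof at all here, simply citing Fasel's Theorem~11.7, and both Fasel's argument and the paper's own proof of the quadric analogue in the final section work with the extension \(0\to \I^{j+1}\to \KMW_j\to \KM_j\to 0\) combined with localization along a linear subspace, which forces a case-by-case analysis of the boundary maps \(\H^i(\KM_j)\to\H^{i+1}(\I^{j+1},\LC)\). You instead use the fibre-product presentation \(0\to\KMW_j\to\KM_j\oplus\I^j\to\Ib^j\to 0\) in the spirit of Proposition~\ref{prop:HW}; this buys an immediate degeneration of the long exact sequence, since \(\H^i(\PP^n,\KM_j)\to\H^i(\PP^n,\Ib^j)\) is coefficient-wise mod-\(2\) reduction \(\K^M_{j-i}(F)\to\K^M_{j-i}(F)/2\) and hence already surjective, and it reduces the whole theorem to evaluating a fibre product of known groups. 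Your handling of the twists is right (\(\KM_j\) and \(\Ib^j\) are orientation-insensitive, so only the \(\I^j\) and \(\KMW_j\) legs carry \(\Oo(l)\)), and the four regimes do exhaust \(0\le i\le n\) disjointly. Two points deserve more care than your write-up gives them. First, in the regime \(1\le i\le n\), \(l\equiv i\), you need not just the additive statement~\eqref{eq:I-cohomology-of-P-over-field} but the fact that the reduction \(\H^i(\PP^n,\I^j,\Oo(l))\to\H^i(\PP^n,\Ib^j)\) is an \emph{isomorphism}; this does not follow from knowing the two groups are abstractly isomorphic (if the map were zero the fibre product would acquire an extra \(\FIb^{j-i}\) summand). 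The fact is true and is exactly Fasel's Lemma~5.2 identifying \(\red\circ\mu^m\) with multiplication by \(\bar\xi^m\), as used in the proof of Theorem~\ref{alternativeproof}, so you should cite that rather than the additive formula. Second, in the case \(i=n\), \(l\not\equiv n\), identifying the square of cohomology groups with Morel's defining cartesian square for \(\K^{MW}_{j-n}(F)\) requires that the pushforward \(s_*\) from a rational point commutes with all three reduction maps; this is standard (the paper uses the same compatibility in Case~III of Theorem~\ref{thm:keyexactsequence} and in Remark~\ref{rem:generators-of-DI}) but should be said. With those two citations supplied, your argument is a complete and arguably cleaner proof of the purely additive statement.
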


Now consider once again our split quadric \(Q_n\) over a field.  Write \(n = p+q\) with \(q = p\) if \(n\) is even and \(q = p+1\) if \(n\) is odd, so that \(q\) is the codimension of \(\iota_x\colon \PPpX \hookrightarrow Q_n\).  Recall from Lemma~\ref{lem:normal-bundles-in-Q} that \(\det \mathcal{N} \cong \Oo(q-1)\) for the normal bundle \(\mathcal{N}\) of this inclusion.  As in $\I$-cohomology, localization and d\'{e}vissage yield a long exact sequence as follows \cite[Corollaire~10.4.10]{faselthesis}:
\[
  \cdots \rightarrow \H^{i-q}(\PP_x^p, \KMW_{j-q}, \LC (1-q) )\xrightarrow{(\iota_x)_*} \H^i(Q_{n}, \KMW_j, \mathcal{L}) \rightarrow \H^i(Q_n - \PP^p_x, \KMW_j, \LC) \rightarrow  \cdots
\]
Homotopy invariance of \(\I\)-cohomology (Theorem~\ref{thm:homotopy-invariance}) 
together with the corresponding result for Milnor cohomology and the usual
five lemma argument implies homotopy invariance for Milnor-Witt cohomology.
Applying this to the affine space bundle \(\rho\colon Q_n - \PP^p_x \rightarrow \PP^p_y\), we obtain the following exact sequence:
\begin{equation}\label{eq:KMWQ}
  \cdots \rightarrow \H^{i-q}(\PP_x^p, \KMW_{j-q}, \LC(1-q) )\xrightarrow{(\iota_x)_*} \H^i(Q_{n}, \KMW_j, \mathcal{L}) \xrightarrow{\iota^*_y} \H^i(\PP^p_y, \KMW_j, \LC) \rightarrow  \cdots
\end{equation}  

\begin{theorem}
  Let \(Q_n\) be the \(n\)-dimensional split quadric over a field \(F\) of characteristic \(\neq 2\), with \(n \geq 3\).  Let \(\LC\) be a line bundle over \(Q_n\).  The exact sequence (\ref{eq:KMWQ}) above splits, so that we have isomorphisms
  \[
    \H^i(Q_{n}, \KMW_j, \mathcal{L})  
    \cong 
    \H^{i-q}(\PP_x^p, \KMW_{j-q}, \LC(1-q) ) \oplus  \H^i(\PP^p_y, \KMW_j, \LC) 
  \]
\end{theorem}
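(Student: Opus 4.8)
The plan is to deduce the splitting from the splittings already established in $\I$-cohomology (Theorem~\ref{thm:keyexactsequence}) and in Milnor cohomology (implicit in Corollary~\ref{coro:MilnorQ}), by exploiting the cartesian square defining Milnor-Witt $K$-theory. Concretely, for every $j$ the relevant sheaves fit into a short exact sequence
\[
0 \to \KMW_j \to \I^j \oplus \KM_j \to \Ib^j \to 0,
\]
where the right-hand map is the difference of the two mod-2 reductions and we use the Milnor-conjecture identification $\Ib^j \cong \KM_j/2$. For any smooth $X$ and any twist $\LC$ (on which $\KM$ and $\Ib$ depend only trivially, being oriented theories) this yields a natural Bockstein long exact sequence
\[
\cdots \to \H^{i-1}(X,\Ib^j) \xrightarrow{\partial_B} \H^i(X,\KMW_j,\LC) \xrightarrow{(f,g)} \H^i(X,\I^j,\LC)\oplus \H^i(X,\KM_j) \xrightarrow{h} \H^i(X,\Ib^j) \to \cdots
\]

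First I would show that for $X \in \{Q_n,\PPpX,\PPpY\}$ the connecting map $\partial_B$ vanishes in every degree. By exactness this is equivalent to surjectivity of $h$, which follows as soon as the single Milnor reduction $\H^i(X,\KM_j)\to\H^i(X,\Ib^j)$ is surjective. For $X=Q_n$ this holds because $\H^{\star,\bullet}(Q_n,\KM)$ is a free $\H^{\star,\bullet}(F,\KM)$-module (Corollary~\ref{coro:MilnorQ}), on which mod-2 reduction is manifestly onto; the analogous freeness for $\PP^p$ handles $\PPpX$ and $\PPpY$. Consequently, in each of these three cases the Bockstein sequence collapses and exhibits Milnor-Witt cohomology as an honest fibre product of groups,
\[
\H^i(X,\KMW_j,\LC) \cong \H^i(X,\I^j,\LC) \times_{\H^i(X,\Ib^j)} \H^i(X,\KM_j).
\]

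Next I would run the four localization sequences simultaneously. The $\I$-, Milnor- and $\Ib$-versions all split (Theorem~\ref{thm:keyexactsequence}, Corollary~\ref{coro:MilnorQ}, Corollary~\ref{thm:I-ring-without-twists}), so each of $\H^i(Q_n,\I^j,\LC)$, $\H^i(Q_n,\KM_j)$ and $\H^i(Q_n,\Ib^j)$ decomposes canonically into an $(\iota_x)_*$-summand pushed forward from $\PPpX$ and an $\iota_y^*$-summand pulled back from $\PPpY$. Since the reduction maps are natural with respect to pullback along $\iota_y$ and, as pushforward commutes with mod-2 reduction, also with respect to pushforward along $\iota_x$, the map $h$ for $Q_n$ decomposes as a direct sum $h = h_x \oplus h_y$ of the corresponding maps for $\PPpX$ (suitably twisted and shifted) and $\PPpY$. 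Taking kernels and invoking the fibre-product description from the previous step then gives
\[
\H^i(Q_n,\KMW_j,\LC) \cong \ker h_x \oplus \ker h_y \cong \H^{i-q}(\PPpX,\KMW_{j-q},\LC(1-q)) \oplus \H^i(\PPpY,\KMW_j,\LC),
\]
with the two summands realised as the image of $(\iota_x)_*$ and as the image of a section of $\iota_y^*$, so that \eqref{eq:KMWQ} indeed splits.

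I expect the main obstacle to be the vanishing of $\partial_B$, i.e.\ the surjectivity of the Milnor-to-$\Ib$ reduction; once this is in hand, everything downstream is formal diagram-chasing. This surjectivity is precisely where the explicit computation of Milnor cohomology (freeness over the base field, Corollary~\ref{coro:MilnorQ}) enters. A secondary point requiring care is the bookkeeping of the twist $\LC(1-q)$ and of the relative canonical bundle $\omega_x \cong \Oo(1-q)$ in the pushforward summand, together with the verification — via the base change formula (Theorem~\ref{thm:base-change-formula}) and the compatibility of pushforward with reduction — that the decomposition $h = h_x \oplus h_y$ is genuinely respected by all three reduction maps. Should one prefer to avoid the Bockstein machinery, an alternative is to transcribe the three-case argument of Theorem~\ref{thm:keyexactsequence} verbatim, replacing Fasel's projective-space input by Theorem~\ref{thm:KMWP}, the Euler-class and Bockstein maps $\lambda^m$ of Definition~\ref{def:twisthomo} by their Milnor-Witt analogues, and the reduction to $\Ib$-cohomology in the injectivity step by reduction to Milnor cohomology; homotopy invariance for $\KMW$ is already available.
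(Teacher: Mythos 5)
Your proposal is correct in outline but takes a genuinely different route from the paper. You work with the four-term sheaf sequence \(0\to\KMW_j\to\I^j\oplus\KM_j\to\Ib^j\to 0\) coming from Morel's cartesian square, kill the Bockstein \(\partial_B\) by surjectivity of the Milnor reduction, and then transport the already-established splittings of the \(\I\)-, \(\KM\)- and \(\Ib\)-localization sequences through the resulting fibre-product description. The paper instead uses the two-term comparison \(0\to\I^{j+1}\to\KMW_j\to\KM_j\to 0\) and argues degree by degree: for \(n=2p+1\) everything reduces, for degree reasons, to showing that the connecting maps vanish in degrees \(p\) and \(p+1\), which is done by a five-lemma argument for one parity of the twist and by a vanishing-of-\(\delta\) argument for the other; for \(n=2p\) the only nontrivial degree is \(p\), where an explicit section of \(\iota_y^*\) is produced from the transversal plane \(\PP^p_{x'}\) via the cartesian square \eqref{n2p:cartesian} and the base change formula. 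Your approach is more uniform and avoids the case division by parity of the twist; what it costs is that the surjectivity of \(\H^i(X,\KM_j)\to\H^i(X,\Ib^j)\) requires not just freeness of Milnor cohomology over \(\H^{\star,\bullet}(F,\KM)\) but also the fact that the generators of \(\Ib\)-cohomology are reductions of integral generators (Lemma~\ref{h2-generators-here-vs-DI:Hopf} and Remark~\ref{rem:generators-of-DI}).

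One step you should not treat as formal. Naturality of the reduction maps with respect to \(\iota_y^*\) and \((\iota_x)_*\) only makes \(h\) ``triangular'' with respect to the canonical subobjects given by the images of the pushforwards; the snake lemma together with surjectivity of \(h_x\) then yields a short exact sequence \(0\to\ker h_x\to\ker h\to\ker h_y\to 0\), not yet a direct sum. For \(n=2p+1\) this suffices, since in every degree at most one of the outer terms is nonzero. But for \(n=2p\) in degree \(i=p\) both terms are nonzero, and the extension must actually be shown to split: for even twist it is an extension of \(\K^M_{j-p}(F)\) by \(2\K^M_{j-p}(F)\), for odd twist an extension of \(\K^{MW}_{j-p}(F)\) by \(\K^{MW}_{j-p}(F)\), and neither splits for formal reasons. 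Writing \(h=h_x\oplus h_y\) presupposes compatible choices of sections for all three theories, and producing such a compatible section is exactly the base-change argument with \eqref{n2p:cartesian} that the paper invokes. So the point you relegate to a ``secondary point requiring care'' is in fact the only nonformal content of the even-dimensional case and needs to be spelled out; with that addition your argument goes through.
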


\begin{proof}
  Suppose \(n = 2p+1\).  Then for degree reasons the map \(\iota_y^*\) in sequence~\eqref{eq:KMWQ} is an isomorphism in the range \(0\leq i < p\), and the map \((\iota_x)_*\) is an isomorphism in the range \(p+1 <  i \leq n\).  In degrees \(i = p\) and \(i = p+1\), we have an exact sequence
  \begin{equation}\label{eq:KMWQ-n-odd}
    0 \rightarrow \H^p(Q_n, \KMW_{j}, \LC) \stackrel{\iota^*_y}\rightarrow \H^p(\PP^p_y, \KMW_{j}, \LC ) \stackrel{\partial}\rightarrow \H^0(\PP_{x}^p, \KMW_{j-q}, \LC \otimes  \omega_x) \stackrel{(\iota_x)_*}\longrightarrow \H^{p+1}(Q_n, \KMW_{j}, \LC ) \rightarrow 0
  \end{equation}
  Consider first the case \(\LC = \Oo(p+1)\).  Consider the following commutative diagram induced by the short exact sequences of sheaves \(0\to \I^{j+1} \to \KMW_j \to \KM_j\to 0\) (cf.\  \cite[above Lemma~11.3]{fasel:ij}),
  keeping in mind that the twists with respect to any \(\LC\) are
  invisible to \(\KM_j\):  
  \[ 
    \xymatrix{
      \H^{p}(Q_n,\I^{j+1}, \LC) \ar[r] \ar[d]^-{\iota_y^*}_-\cong &\H^{p}(Q_n,\KMW_j, \LC) \ar[r] \ar[d]^-{\iota_y^*}
      & \H^{p}(Q_n,\KM_j)    \ar[r] \ar@{->>}[d]^-{\iota_y^*} & \H^{p+1}(Q_n,\I^{j+1},\LC) \ar[d] \\
      \H^{p}(\PP^p_y,\I^{j+1}, \LC) \ar[r]  &\H^{p}(\PP^p_y,\KMW_j,\LC) \ar[r] 
      & \H^{p}(\PP^p_y,\KM_j)    \ar[r] & 0
    }
  \]
  From Theorem~\ref{thm:I-additive}, we know that the group \(\H^{p+1}(Q_n,\I^{j+1}, \Oo(p+1))\) in the top right corner vanishes.  Our proof of Theorem~\ref{thm:I-additive} moreover shows that \(\iota_y\) induces an isomorphism on \(\I\)-cohomology in degree \(p\), so the vertical map on the far left is an isomorphism.  Recall from \cite[Theorem~11.1]{fasel:ij} that \(\H^p(\PP^p_y, \KM_j) = \K^M_{j-p}(F) \cdot e(\Oo(1))^p\).  By considering the Euler class of the bundle \(\Oo(1)\) over \(Q_n\) (and using Lemma~\ref{lem:picquadrics}), we find that \(\iota_y^*\) is surjective on \(\KM\)-cohomology in degree~\(p\).  Altogether, it follows from a version of the five lemma that the second vertical map in the diagram is also surjective. So in sequence \eqref{eq:KMWQ-n-odd}, \(\partial = 0\)  and both \(\iota_y^*\) and \((\iota_x)_*\) are isomorphisms.
  
  Now consider the case \(\LC = \Oo(p)\). Recall from Lemma~\ref{lem:normal-bundles-in-Q} that \(\det (\mathcal{N})^\vee = \Oo(-p)\). Consider the following commutative diagram induced by the same short exact sequence of sheaves as in the previous case:
  \[ 
    \xymatrix{ 	
      0 \ar[r] \ar@{>->}[d]  & \H^{0}(\PP^p_x,\I^{j-q+1}) \ar[r]  \ar[d]^-{(\iota_x)_*}_-\cong & \H^{0}(\PP^p_x,\KMW_{j-q}) \ar[r]  \ar[d]^-{(\iota_x)_*}
      & \H^{0}(\PP^p_x,\KM_{j-q})     \ar[d]^-{(\iota_x)_*}_-\cong \\
      \H^p(Q_n,\KM_j) \ar[r]^-\delta & \H^{p+1}(Q_n,\I^{j+1}, \Oo(p)) \ar[r]                                    & \H^{p+1}(Q_n,\KMW_j, \Oo(p)) \ar[r] 
      & \H^{p+1}(Q_n,\KM_j)      	
    }
  \]
  We have isomorphisms on the left and right in this diagram by the computation of \(\I\)-cohomology and Milnor cohomology of quadrics above. (One sees this directly from the localization sequence and degree considerations.)  We claim that the boundary map \(\delta\) vanishes, and that therefore \((\iota_x)_*\) is injective on \(\KMW\) cohomology. To see this, we compare \(\delta\) with the boundary map in the long exact sequences associated with the coefficient sequence \(0\to \I^{j+1} \to \I^j \to \Ib^j \to 0\), via the following commutative diagram (see \cite[Lemma~11.3]{fasel:ij}):
  \begin{equation*}
    \xymatrix{ 
      {\quad\quad\dots\quad\quad} \ar[r] & \H^p(Q_n,\KM_j) \ar[r]^-\delta \ar[d] & \ar@{=}[d]	\H^{p+1}(Q_n,\I^{j+1}, \Oo(p)) \ar[r] & {\quad\quad\dots\quad\quad} \\
      \H^p(Q_n,\I^j,\Oo(p)) \ar[r] & \H^p(Q_n,\Ib^j) \ar[r]^-{\partial}                               & \H^{p+1}(Q_n,\I^{j+1}, \Oo(p)) \ar[r] & {\quad\quad\dots\quad\quad}
    }
  \end{equation*}
  Our computations of $\I$- and \(\Ib\)-cohomology show that the reduction map on the left of the lower sequence is surjective for odd-dimensional quadrics. So the boundary map \(\partial\) is zero, and hence so is \(\delta\).
  
  Suppose now that \(n = 2p\). In this case, the map \(\iota_y^*\) in sequence~\eqref{eq:KMWQ} is an isomorphism for degree reasons in the range \(0\leq i < p-1\), and the map \((\iota_x)_*\) is an isomorphism for degree reasons in the range \(p+1 <  i \leq n\). It remains to show that \(\iota_y^*\) is also an isomorphism in degree \(p-1\), that \((\iota_x)_*\) is an isomorphism in degree \(p+1\), and that the following sequence in degree \(p\) is split exact:
  \[ 
    \xymatrix{
      0 \ar[r] & \H^0(\PPpX,\KMW_{j-p}, \omega_x \otimes \LC) \ar[r]^-{(\iota_x)_*} &\H^p(Q_n,\KMW_j, \LC) \ar[r]^-{\iota_y^*} 
      & \H^p(\PPpY,\KMW_j, \LC)    \ar[r] & 0 
    }
  \]
  To verify that this sequence is indeed split exact, one can use the cartesian diagram~\eqref{n2p:cartesian} and the base change formula.  The other two claims, concerning degrees \(p-1\) and \(p+1\), then follow. 
\end{proof}

\begin{corollary}
  \newcommand*{\tn}[1]{\textnormal{#1}}
  The Milnor-Witt cohomology groups of a split quadric \(Q_n\) over a field \(F\) of characteristic \(\neq 2\) in degrees \(i\in\{0,\dots,n\}\) are as follows:
  
  Case \(n = 2p\):
  \begin{align*}
    \H^i( Q_{2p}, \KMW_j, \Oo(l) ) 
    & \cong
      \left\{
      \begin{alignedat}{12}
        & \K^{MW}_{j-i}(F)                                      & \tn{ if } & \tn{(} & i & = 0            & \quad & \tn{ and } & \quad l & \equiv i     &  & \mod 2 \tn{)} \\
        &                                                               & \tn{ or } & \tn{(} & i & = n            &       & \tn{ and } & l       & \equiv i     &  & \mod 2 \tn{)} \\
        & \K^{MW}_{j-i}(F)\oplus \K^{MW}_{j-i}(F) \quad & \tn{ if } &        & i & = p            &       & \tn{ and } & l       & \not\equiv i &  & \mod 2        \\
        & \K^{M}_{j-i}(F)\oplus 2\K^{M}_{j-i}(F) \quad  & \tn{ if } &        & i & = p            &       & \tn{ and } & l       & \equiv i     &  & \mod 2        \\
        & \K^{M}_{j-i}(F)                                       & \tn{ if } & \tn{(} & 0 & < i < p        &       & \tn{ and } & l       & \equiv i     &  & \mod 2 \tn{)} \\
        &                                                               & \tn{ or } & \tn{(} & p & < i     \leq n &       & \tn{ and } & l       & \not\equiv i &  & \mod 2 \tn{)} \\
        & 2\K^{M}_{j-i}(F)                                      & \tn{ if } & \tn{(} & 0 & \leq i  < p    &       & \tn{ and } & l       & \not\equiv i &  & \mod 2 \tn{)} \\
        &                                                               & \tn{ or } & \tn{(} & p & < i     < n    &       & \tn{ and } & l       & \equiv i     &  & \mod 2 \tn{)} \\
      \end{alignedat}
    \right.
    \intertext{
    Case \(n = 2p + 1\):
    }
    \H^i( Q_{2p+1}, \KMW_j, \Oo(l) ) 
    & \cong
      \left\{
      \begin{alignedat}{12}
        & \K^{MW}_{j-i}(F) \quad\quad\quad\quad\quad & \tn{ if } & \tn{(} & i          & = 0    & \quad & \tn{ and } & \quad l & \equiv i     &  & \mod 2 \tn{)} \\
        &                              & \tn{ or } & \tn{(} & i          & = p    &       & \tn{ and } & l       & \not\equiv i &  & \mod 2 \tn{)} \\
        &                              & \tn{ or } & \tn{(} & i          & = p+1  &       & \tn{ and } & l       & \not\equiv i &  & \mod 2 \tn{)} \\
        &                              & \tn{ or } & \tn{(} & i          & = n    &       & \tn{ and } & l       & \equiv i     &  & \mod 2 \tn{)} \\
        &\K^{M}_{j-i}(F)        & \tn{ if } & \tn{(} & 0 &< i       \leq p &       & \tn{ and } & l       & \equiv i     &  & \mod 2 \tn{)} \\
        &                               & \tn{ or } & \tn{(} & p+1 &< i     \leq n &       & \tn{ and } & l       & \not\equiv i &  & \mod 2 \tn{)} \\
        &2\K^{M}_{j-i}(F)       & \tn{ if } & \tn{(} & 0 &\leq i    < p    &       & \tn{ and } & l       & \not\equiv i &  & \mod 2 \tn{)} \\
        &                               & \tn{ or } & \tn{(} & p+1 &\leq i  < n    &       & \tn{ and } & l       & \equiv i     &  & \mod 2 \tn{)} \\
      \end{alignedat}
    \right.
  \end{align*}
  In degrees \(i\not\in \{0,\dots,n\}\), all groups are zero.
\end{corollary}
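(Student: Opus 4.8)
The plan is to feed the splitting isomorphism just established—namely
\[
  \H^i(Q_n, \KMW_j, \Oo(l)) \cong \H^{i-q}(\PPpX, \KMW_{j-q}, \Oo(l+1-q)) \oplus \H^i(\PPpY, \KMW_j, \Oo(l))
\]
—into Fasel's computation of the Milnor-Witt cohomology of projective space (Theorem~\ref{thm:KMWP}). Here I have used Lemma~\ref{lem:picquadrics} to identify the restriction of $\Oo_Q(k)$ to each $\PP^p$ with $\Oo_{\PP^p}(k)$, and Lemma~\ref{lem:normal-bundles-in-Q} to rewrite the twist $\LC \otimes \det(\mathcal{N})^\vee$ on the first summand as $\Oo(l+1-q)$. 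Since both summands are cohomology groups of a $p$-dimensional projective space, Theorem~\ref{thm:KMWP} gives each of them explicitly, and the whole corollary reduces to assembling these two contributions degree by degree.

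First I would split the range $0 \le i \le n$ according to which summand can be nonzero: the second summand $\H^i(\PPpY, \KMW_j, \Oo(l))$ is nonzero only for $0 \le i \le p$, while the first summand is nonzero only for $q \le i \le n$. When $n = 2p+1$ (so $q = p+1$) these ranges are disjoint and each degree receives a contribution from exactly one summand; when $n = 2p$ (so $q = p$) they overlap precisely in the middle degree $i = p$, where both summands contribute. Degrees $i \notin \{0,\dots,n\}$ are automatically handled, as both factors then vanish. The substantive work is to match parities. In the second summand the relevant endpoints of Theorem~\ref{thm:KMWP} are $i = 0$ and $i = p$, with the twist parity governed by $l \bmod 2$; in the first summand, writing $i' = i - q$, $j' = j-q$ and $l' = l+1-q$, the endpoints are $i' = 0$ (that is, $i = q$) and $i' = p$ (that is, $i = n$), with the twist parity now governed by $l' = l+1-q$. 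I would simply track how the condition $l \equiv i \mod 2$ on $Q_n$ transforms under the shift by $q$ in degree and $1-q$ in twist, and verify case by case that the four possible output groups ($\K^{MW}_{j-i}$, $\K^M_{j-i}$, $2\K^M_{j-i}$, and the direct sums occurring in the middle degree) land exactly where the corollary asserts.

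The main obstacle is purely bookkeeping: because $q = p$ for even $n$ but $q = p+1$ for odd $n$, the shift $1-q$ reverses or preserves twist parities differently in the two cases, and one must check that the endpoint contributions—which produce the Milnor-Witt groups $\K^{MW}_{j-i}$ rather than $\K^M_{j-i}$ or $2\K^M_{j-i}$—are correctly distributed. The only genuinely delicate point is the middle degree. For even $n$ at $i = p$, the two summands must combine to give $\K^M_{j-p} \oplus 2\K^M_{j-p}$ when $l \equiv p \mod 2$ and $\K^{MW}_{j-p} \oplus \K^{MW}_{j-p}$ when $l \not\equiv p \mod 2$; this forces the $i'=0$ endpoint of the first summand to contribute a Milnor-Witt group in exactly the twist parity where the $i=p$ endpoint of the second summand does, which is precisely what the shift $l' = l+1-p$ arranges. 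For odd $n$ the analogous endpoint check is spread across the two adjacent degrees $i = p$ and $i = p+1$. Once these parity matchings are verified, no further input is required, and the stated formulas follow.
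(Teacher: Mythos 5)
Your proposal is correct and is exactly the derivation the paper intends: the corollary is stated without proof as an immediate consequence of the preceding splitting theorem combined with Fasel's Theorem~\ref{thm:KMWP} for \(\PP^p\), with only the degree shift by \(q\) and the twist shift by \(1-q\) to track. Your parity bookkeeping (including the middle-degree combination for \(n=2p\) and the two adjacent endpoint degrees \(i=p,\,p+1\) for \(n=2p+1\)) checks out.
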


\end{document}